\theoremstyle{plain}
\newtheorem{thm}{Theorem}[section]
\newtheorem{theorem}[thm]{Theorem}
\newtheorem{lemma}[thm]{Lemma}
\newtheorem{corollary}[thm]{Corollary}
\newtheorem{proposition}[thm]{Proposition}
\theoremstyle{definition}
\newtheorem{remark}[thm]{Remark}
\newtheorem{definition}[thm]{Definition}
\newtheorem{example}[thm]{Example}
\newtheorem{question}[thm]{Question}
\newtheorem{setup}[thm]{Set-up}
\numberwithin{equation}{section}
\newcommand{\romea}{\uppercase\expandafter{\romannumeral1}}
\newcommand{\romeb}{\uppercase\expandafter{\romannumeral2}}
\newcommand{\Aut}{{\rm Aut}}
\newcommand{\Diag}{{\rm diag}}
\newcommand{\GL}{{\rm GL}}
\newcommand{\PGL}{{\rm PGL}}
\newcommand{\PSL}{{\rm PSL}}
\newcommand{\SL}{{\rm SL}}
\newcommand{\JC}{{\rm JC}}
\newcommand{\Oplus}{\mathop{\ensuremath{\vcenter{\hbox{\scalebox{1.2}{$\bigoplus$}}}}}}
\newcommand{\C}{{\mathbb C}}
\renewcommand{\P}{{\mathbb P}}
\begin{document}

	\title{On automorphism groups of smooth hypersurfaces}
	
	\author{Song Yang, Xun Yu \and Zigang Zhu}
	
	\address{Center for Applied Mathematics and KL-AAGDM, Tianjin University, Weijin Road 92, Tianjin 300072, P.R. China}
	
	\email{syangmath@tju.edu.cn, xunyu@tju.edu.cn, zhzg0313@tju.edu.cn}
	
	\subjclass[2020]{Primary 14J50; Secondary 14J70, 20E99}
	
	\maketitle
	
	\begin{abstract}
		We show that smooth hypersurfaces in complex projective spaces with automorphism groups of maximum size are isomorphic to Fermat hypersurfaces, with a few exceptions. For the exceptions, we give explicitly the defining equations and automorphism groups.
	\end{abstract}
	\setcounter{tocdepth}{1}
	\tableofcontents

	\section{Introduction}\label{Intro}
	The purpose of this paper is to study smooth hypersurfaces in complex projective spaces with large automorphism groups. Let $F\in\C[x_1,\ldots,x_{n+2}]$ be a smooth homogeneous polynomial of degree $d\ge 3$ and let $X_F\subset \P^{n+1}$ denote the smooth hypersurface defined by $F$, where $n\ge 1$. An automorphism $f\in \Aut(X_F)$ of $X_F$ is called {\it linear} with respect to the embedding $X_F\hookrightarrow \P^{n+1}$ if $f$ extends to an automorphism of $\P^{n+1}$, i.e., $f$ is given by a linear change of homogeneous coordinates. The automorphism group $\Aut(X_F)$ is finite and equal to its subgroup ${\rm Lin}(X_F)$ of linear automorphisms if $(n,d)\neq (1,3), (2,4)$ (see \cite[Theorems 1 and 2]{MM63} for $n\ge 2$ and \cite[Theorem 2]{Cha78} for $n=1$). We use $\Aut(F)$ to denote the group of linear transformations that  preserve $F$ and $\mathcal{I}_{n+2,d} $ to denote the subgroup of the general linear group $\GL(n+2,\C)$ consisting of $d$ scalar matrices. Then clearly $\mathcal{I}_{n+2,d} $ is a normal subgroup of $\Aut(F)$ and the following exact sequence gives a basic relation between ${\rm Lin}(X_F)$ and $\Aut(F)$: 
	\begin{equation}\label{eq:keyexact}
		1\rightarrow \mathcal{I}_{n+2,d}  \longrightarrow\Aut(F) \longrightarrow {\rm Lin}(X_F)\rightarrow 1.
	\end{equation}
	
	Bounding the size of $\Aut(X_F)$, ${\rm Lin}(X_F)$ or $\Aut(F)$  is almost the same thing and has a long history (see \cite[Section 6]{OS78} for some historical remarks).  Bott--Tate and Orlik--Solomon showed that there exists an upper bound for the order $|\Aut(F)|$ depending only on $n$ and $d$ (see \cite[Corollary (2.7)]{OS78}). Finding an effective bound in terms of $n$ and $d$ is important.
	We use $X_d^n$ to denote the Fermat hypersurface in $\P^{n+1}$ of degree $d$ defined by Fermat polynomial $F_d^n:=x_1^d+x_2^d+\cdots+x_{n+2}^d$. It is known that $\Aut(X_d^n)$ is isomorphic to  a semidirect product $C_d^{n+1}\rtimes S_{n+2}$ and $|\Aut(X_d^n)|=d^{n+1}\cdot (n+2)!$ if $(n,d)\neq (1,3), (2,4)$ (see \cite[Page 147]{HS80}, \cite{Shi88}). By a classical theorem of Hurwitz, the automorphism group of a complex curve of genus $g\ge 2$ is of order at most $84 (g-1)$. This implies that for a plane curve $X_F$ with $d\geq 4$, $|\Aut(X_F)|\leq 42(d-3)d$. Automorphism groups of plane curves have been extensively studied by many people. For $n=1$ and $d\ge 4$,  plane curves $X_F$ with maximum $|\Aut(X_F)|$ are isomorphic to Fermat curves $X_{d}^1$ (resp. Klein quartic, resp. Wiman sextic) if $d\neq 4, 6$ (resp. $d=4$, resp. $d=6$) (see \cite[Theorem 1]{Pam13},  \cite[Theorem 2.5]{Haru19} and references therein). Andreotti \cite{And50} was the first to establish explicit bounds for the order of the automorphism group of varieties of general type in higher dimensions. For $n\ge 2$, Howard--Sommese \cite[Theorem 2]{HS80} proved that there is a constant $k_n$ depending only on $n$ such that $|\Aut(X_F)|\leq k_n\cdot d^{n+1}$ if $(n,d)\neq (2,4)$. Xiao (\cite[Theorem 1]{Xia94}, \cite[Theorem 2]{Xia95}) proved that a minimal smooth projective surface $S$ of general type has at most $42^2 K_S^2$ automorphisms. Consequently, for a surface $X_F\subset \P^3$ with $d\geq5$, $|\Aut(X_F)|\leq 42^2(d-4)^2d$. For $2\le n\le 5$, Fermat cubic $n$-fold has the largest possible order for the automorphism group among all smooth cubic $n$-folds (see \cite[\S 100]{Seg42},  \cite[Theorem 5.3]{Hos97}, \cite[Theorem 9.5.8]{Do12} for $n=2$; \cite[Theorem 1.1]{WY20} for $n=3$; \cite[Corollary 6.14]{LZ22}, \cite[Theorem 1.2]{YYZ23} for $n=4$; \cite[Theorem 1.1]{YYZ23} for $n=5$). Fermat quintic $3$-fold has the largest possible order for the automorphism group among all smooth quintic $3$-folds (\cite[Theorem 2.2]{OY19}). For some partial results on abelian subgroups of automorphism groups of smooth hypersurfaces of arbitrary dimension, see  for instance \cite[Theorem 0.1]{CS95},  \cite[B\'{e}zout Lemma]{Sza96}, \cite[Theorem 2.6]{GL11}, \cite{GL13}, \cite[Theorem 4.8]{Zhe22}, \cite{GLM23}.
	However, to the best of our knowledge,  for $n\ge 2$ except $(n,d)=(2,3), (2,4), (3,3),(4,3),(5,3)$ and $(3,5)$, the maximum order $|\Aut(X_F)|$ is still unknown (see e.g. \cite[Chapter 1, Remark 3.20]{Huy23}, \cite[Remark 2.11]{GLMV24}, \cite[Question 3.9]{Ess24}) and  the classification of groups of linear automorphisms of hypersurfaces is in a rudimentary
	state as also mentioned in \cite[Page 6]{Dol19}. Our main result is to completely classify smooth hypersurfaces with automorphism groups of maximum order, which in particular gives an optimal upper bound for $|\Aut(X_F)|$.
	
	\begin{theorem}\label{thm:Main}
		Fix integers $n\geq1$, $d\geq3$ with $(n,d)\neq(1,3),(2,4)$. Let $X\subset\P^{n+1}$ be a smooth hypersurface of degree $d$ with maximum $|\Aut(X)|$. Then $$|\Aut(X)|=d^{n+1}\cdot (n+2)!\; \text{ and }\; X \text{ is isomorphic to Fermat hypersurface } X_d^n$$ with the following exceptions:
		\begin{footnotesize}
			\begin{table}[H]\rm
				\renewcommand\arraystretch{1.2}
				\begin{tabular}{p{0.27in}p{0.74in}p{0.52in}p{4.21in}}
					\hline
					$(n,d)$&$\Aut(X)$&$|\Aut(X)|$&$X \text{ is isomorphic to } X_F$\\
					\hline
					$(1,4)$&$\PSL(2,7)$&$168$&$F=x_1^3x_2+x_2^3x_3+x_3^3x_1$ \\
					$(1,6)$&$A_6$&$360$&$F=10x_1^3x_2^3+9(x_1^5+x_2^5)x_3-45x_1^2x_2^2x_3^2-135x_1x_2x_3^4+27x_3^6$ \\
					$(2,6)$&$C_{6}.(S_4^2\rtimes C_2)$&$6912$&$F=x_1^5x_2+x_2^5x_1+x_3^5x_4+x_4^5x_3$ \\
					$(2,12)$&$C_{12}.(A_5^2\rtimes C_2) $&$86400$&$F=x_1^{11}x_2+11x_1^6x_2^6-x_1x_2^{11}+x_3^{11}x_4+11x_3^6x_4^6-x_3x_4^{11}$ \\
					$(4,6)$&${\rm PSU}(4,3).C_2$&$6531840$&$F=\sum\limits_{1\leq i\leq6}x_i^6+\sum\limits_{1\leq i\neq j\leq6}15x_i^4x_j^2-\sum\limits_{1\leq i<j<k\leq6}30x_i^2x_j^2x_k^2+240\sqrt{-3}x_1x_2x_3x_4x_5x_6$ \\
					$(4,12)$&$C_{12}^2.(A_5^3\rtimes S_3) $&$186624000$&$F=x_1^{11}x_2+11x_1^6x_2^6-x_1x_2^{11}+x_3^{11}x_4+11x_3^6x_4^6-x_3x_4^{11}+x_5^{11}x_6+11x_5^6x_6^6-x_5x_6^{11}$ \\
					\hline
				\end{tabular}
			\end{table}
		\end{footnotesize}
	\end{theorem}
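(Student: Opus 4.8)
The plan is to pass to the finite linear group $G=\Aut(F)\leq\GL(n+2,\C)$, which always contains the scalar subgroup $\mathcal{I}_{n+2,d}\cong C_d$. By the exact sequence \eqref{eq:keyexact} together with the identification $\Aut(X_F)={\rm Lin}(X_F)$ (valid since $(n,d)\neq(1,3),(2,4)$), maximizing $|\Aut(X_F)|$ is equivalent to maximizing $|G|$, where the Fermat value is $|\Aut(F_d^n)|=d\cdot d^{n+1}(n+2)!=d^{n+2}(n+2)!$, attained by the monomial group $C_d\wr S_{n+2}$ acting on the invariant $\sum_i x_i^d$. I would then control $|G|$ through its linear action on $V=\C^{n+2}$, subject to the smoothness of $X_F$ and to the Orlik--Solomon finiteness.

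The central device is a dichotomy for the action of $G$ on $V$. If $V$ admits a nontrivial $G$-stable direct-sum decomposition compatible with $F$, so that $F$ splits as $F_1+\cdots+F_k$ in disjoint sets of variables with each $X_{F_i}$ smooth, then $G$ is assembled from the automorphism groups of the blocks together with permutations of mutually isomorphic blocks, and one reduces to lower-dimensional building blocks by induction on $n$. In the complementary, primitive (indecomposable) case one invokes the classification of finite primitive subgroups of $\GL(n+2,\C)$ for the relevant small values of $n+2$, intersected with the requirement that $G$ fix a smooth form of degree $d$, to produce the finitely many candidates.

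Next I would establish the general inequality $|G|\leq d^{n+2}(n+2)!$, with equality only for Fermat, by combining two estimates. Smoothness forces enough monomials to appear in $F$ (each variable must occur, e.g.\ through a term $x_i^d$ or $x_i^{d-1}x_j$), which bounds the order of the diagonalizable normal subgroup $T\leq G$ of diagonal automorphisms in an adapted coordinate system, the Fermat torus $C_d^{n+2}$ attaining the maximum. The quotient $G/T$ acts faithfully by permuting the eigenlines of $T$; when $T$ is a maximal torus with $n+2$ distinct eigenlines, this embeds $G/T$ into $S_{n+2}$, so $|G/T|\leq(n+2)!$. The Fermat polynomial realizes both extremes at once, giving the equality case. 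The crux is to show that any attempt to beat this value—either by a primitive group carrying large stabilizers on fewer eigenlines, or by blocks with more symmetry than $C_d\rtimes C_2$—forces $(n,d)$ into an explicit finite list.

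Finally I would dispose of that finite list by hand. For $n=1$ the primitive ternary possibilities yield the Klein quartic with $\PSL(2,7)$ and the Wiman sextic with $A_6$ (via the Valentiner group $3.A_6\subset\SL(3,\C)$). For $n\geq2$ the surviving competitors are the binary polyhedral blocks: the octahedral sextic $xy(x^4+y^4)$ and the icosahedral form $xy(x^{10}+11x^5y^5-y^{10})$ with $A_5$-symmetry, assembled in two or three copies to give the $(2,6),(2,12),(4,12)$ exceptions, together with the primitive six-dimensional degree-$6$ invariant of ${\rm PSU}(4,3).C_2$ producing the $(4,6)$ exception. In each case I would check smoothness, compute $\Aut(F)$ and $|\Aut(X)|$ explicitly, and confirm these orders exceed $d^{n+1}(n+2)!$. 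The main obstacle is precisely this interface: producing a general inequality sharp enough that only finitely many $(n,d)$ can violate it, while simultaneously controlling the primitive linear groups and the multi-block assemblies in those residual cases, which is where the delicate and largely case-by-case analysis resides.
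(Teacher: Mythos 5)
Your overall scaffolding matches the paper's (reduce via the exact sequence to bounding $|\Aut(F)|$, split into primitive versus non-primitive actions, invoke the classification of finite primitive linear groups in small degree, and check the exceptional forms by hand), but there are two genuine gaps at exactly the points you yourself flag as ``the crux.'' First, your dichotomy is not a dichotomy. You split into (i) $V$ admits a $G$-stable decomposition \emph{compatible with $F$}, so that $F=F_1+\cdots+F_k$ in disjoint variables, and (ii) the ``complementary, primitive'' case. But the complement of (i) is far larger than the primitive case: $G=\Aut(F)$ can be reducible or imprimitive while $F$ stubbornly contains cross monomials mixing the $G$-stable blocks (e.g.\ $x^{d-1}y$ with $x,y$ in different blocks), so no $F$-compatible splitting exists and no induction on blocks applies, yet the classification of primitive groups says nothing. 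This middle ground is where the bulk of the paper's work lives: it introduces primitive constituents and two associated exact sequences for an arbitrary finite linear group, and then proves (Lemmas \ref{lem:type2bound}, \ref{lem:classify}, \ref{lem:d1d2}, \ref{lem:boundtypeII}) that cross monomials between distinct constituents force quantitative drops in $|\Aut(F)|$ --- precisely to dispose of the configurations your case (i)/(ii) split cannot see (compare the $F'\neq 0$ analysis inside the proofs of Theorems \ref{thm:imprim} and \ref{thm:reducible}).

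Second, your mechanism for producing the ``explicit finite list'' of $(n,d)$ does not exist as described. The torus-plus-eigenlines estimate ($|T|\le d^{n+2}$, $G/T\hookrightarrow S_{n+2}$) is valid only when $G$ consists of semi-permutations --- this is the paper's Theorem \ref{thm:mono} --- and it fails completely otherwise: for the $(4,6)$ exception the diagonal part of $\Aut(F)$ is just the scalars $C_6$ and the quotient ${\rm PSU}(4,3).C_2$ has order $6531840\gg 6!$. To show no exceptions occur for $n\ge 26$ or $d\ge 18$ one needs an effective bound interpolating between the abelian bound $d^{s}$ on the kernel (Theorem \ref{thm:diag}, via B\'ezout), Collins' explicit maxima ${\rm JC}(r)$ for primitive subgroups of $\PGL(r,\C)$, and the factorials coming from block permutations --- this is the canonical bound $B(G,F)=d^s\prod_i|H_i|\prod_j k_j!$ and the Fermat-test ratio $R(l,d)$, whose combinatorics (Lemmas \ref{lem:ratioproperties2}--\ref{lem:boundl1}) deliver the finiteness in Theorem \ref{thm:boundnd}. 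Your proposal cites only Orlik--Solomon finiteness, which is not effective, and never invokes Collins' bounds or any substitute; without such a tool the claim that violations confine $(n,d)$ to a finite list is an assertion, not an argument. (Your final hand-check of the exceptional forms is consistent with the paper's tables and is the unproblematic part.)
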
	
	
	The sextic $F$ and its automorphism group in the case $(n,d)=(4,6)$ were given by Todd \cite[Section 6]{Tod50}. It seems that $X_F$ in the cases $(n,d)=(2,6), (2,12), (4,12)$ in the table above are previously unknown. It is known that automorphism groups of smooth hypersurfaces of general type are equal to their birational automorphism groups.
	Hacon--${\rm M^cKernan}$--Xu \cite[Theorem 1.1]{HMX13} showed that the number of birational automorphisms of a variety of general type $X$ is bounded from above by $c \cdot {\rm vol}(X, K_X )$, where $c$ is a constant that only depends on the dimension of $X$, and they asked for finding an explicit bound for the constant $c$ (\cite[Question 1.2]{HMX13}). As a direct consequence of Theorem \ref{thm:Main}, we obtain the optimal values of such constants for smooth hypersurfaces of general type.
	
	\begin{corollary}
		Let $X\subset \P^{n+1}$ be a smooth hypersurface of general type, where $n\ge 2$. Then
		$$|\Aut(X)|\leq (n+2)!(n+3)^n\cdot {\rm vol}(X,K_X).$$
		Moreover, the equality holds if and only if $X$ is isomorphic to Fermat hypersurface $X_{n+3}^n$.
	\end{corollary}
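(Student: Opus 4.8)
The plan is to deduce the corollary directly from Theorem~\ref{thm:Main} by translating the bound on group orders into the language of volumes via adjunction. First I would recall that for a smooth hypersurface $X\subset\P^{n+1}$ of degree $d$ the adjunction formula gives $K_X\cong\CO_X(d-n-2)$, so that $X$ is of general type exactly when $d\geq n+3$; in that range $K_X$ is ample, whence ${\rm vol}(X,K_X)=K_X^{n}$. Computing this top self-intersection by restricting to hyperplane sections yields ${\rm vol}(X,K_X)=d\,(d-n-2)^{n}$, so the claimed inequality reads $|\Aut(X)|\leq B(n,d)$ with $B(n,d):=(n+2)!\,(n+3)^{n}\,d\,(d-n-2)^{n}$.

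Next I would apply Theorem~\ref{thm:Main}, according to which, for fixed $(n,d)$ outside the exceptional list, every smooth degree-$d$ hypersurface satisfies $|\Aut(X)|\leq d^{n+1}(n+2)!$, with equality only for the Fermat hypersurface. In the non-exceptional case the desired bound thus follows from the purely numerical inequality $d^{n+1}(n+2)!\leq B(n,d)$; cancelling the common factor $(n+2)!\,d$ and extracting $n$-th roots reduces this to $d\leq(n+3)(d-n-2)$, i.e.\ to $(n+2)(d-n-3)\geq 0$, which holds precisely because $d\geq n+3$ and becomes an equality exactly when $d=n+3$.

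It remains to dispatch the finitely many exceptional pairs with $n\geq 2$ that are of general type, namely $(n,d)=(2,6),(2,12),(4,12)$; the remaining exception $(4,6)$ has $d=n+2$, so $K_X\cong\CO_X$ and $X$ is not of general type. For each of the three relevant pairs I would simply compare the tabulated order with $B(n,d)$---for example $6912<14400=B(2,6)$, and similarly $86400<B(2,12)=460800$ and $186624000<B(4,12)$---so the bound holds, and in fact strictly, in every exceptional case. Finally, for the equality clause, note that $|\Aut(X)|=B(n,d)$ forces simultaneously $d=n+3$ and $|\Aut(X)|=d^{n+1}(n+2)!$; since $(n,n+3)$ never occurs in the exceptional table, Theorem~\ref{thm:Main} identifies such an $X$ with the Fermat hypersurface $X_{n+3}^{n}$, and the converse is the immediate evaluation $|\Aut(X_{n+3}^{n})|=(n+3)^{n+1}(n+2)!=B(n,n+3)$.

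The main obstacle is not conceptual but organizational: one must verify that the three general-type exceptions genuinely respect the bound and that no exceptional pair has the form $(n,n+3)$, since these are exactly the inputs that could spoil either the inequality or the sharp equality characterization. Both are finite, elementary checks, after which the reduction ``$d\geq n+3\Leftrightarrow$ general type'' together with the single inequality above closes the argument.
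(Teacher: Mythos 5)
Your proposal is correct and follows exactly the route the paper intends: the paper states this corollary as a direct consequence of Theorem~\ref{thm:Main} without writing out details, and your argument (adjunction giving $K_X\cong\CO_X(d-n-2)$, ${\rm vol}(X,K_X)=d(d-n-2)^n$, the numerical inequality $d\le (n+3)(d-n-2)$ with equality iff $d=n+3$, plus the finite check of the general-type exceptions $(2,6),(2,12),(4,12)$ and the observation that $(4,6)$ is not of general type) is precisely the intended fleshing-out. The arithmetic in your exceptional-case comparisons and the equality analysis are all accurate.
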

	
	Next we briefly explain the ideas of the proof of Theorem \ref{thm:Main}. In fact, we completely classify smooth hypersurfaces $X_F$ with $|\Aut(X_F)|\ge |\Aut(X_d^n)|$ (Theorem \ref{thm:atleastFermat}), which clearly implies Theorem \ref{thm:Main}. In order to prove Theorem \ref{thm:atleastFermat}, by the exact sequence \eqref{eq:keyexact}, it suffices to classify $X_F$ with $|\Aut(F)|\ge |\Aut(F_d^n)|$. A famous theorem of Jordan \cite{Jor78} says that, for every integer $r\geq1$, there exists a constant depending only on $r$ such that any finite subgroup of the general linear group $\GL(r,\C)$ has an abelian normal subgroup of index at most this constant. Collins \cite[Theorems A, B and D]{Col07} gave the optimal values ${\rm J}(r)$ of such constants for all $r\ge 1$.  By Jordan's Theorem, as a finite linear group in $\GL(n+2,\C)$, $\Aut(F)$ has a normal abelian subgroup $N$ of index at most ${\rm J}(n+2)$, and $|\Aut(F)|$ is bounded from above by the product $|N|\cdot {\rm J}(n+2)$. Hence bounding the size of $N$ leads to an upper bound of $|\Aut(F)|$. The known bounds for $|\Aut(F)|$ or $|\Aut(X_F)|$ mentioned previously are obtained in this way or a similar way (see \cite[Corollary 2.7]{OS78}, \cite[\S 3]{HS80}). However, it is often that when the index $[\Aut(F): N]$ gets bigger, the size of $N$ gets smaller. Thus the product of ${\rm J}(n+2)$ and the largest possible value for $|N|$ rarely gives an optimal bound for $|\Aut(F)|$ (see Remark \ref{rem:comparebounds}). To overcome this difficulty, we introduce two new notions, {\it canonical bound} (Definition \ref{def:canonical}) and {\it Fermat-test ratio} (Definition \ref{def:FTR}). Roughly speaking, canonical bounds integrate bounding the size of abelian subgroups of $\Aut(F)$ with bounding the size of certain primitive groups called {\it primitive constituents} (see $\bf{Set\textrm{-}up}$ \ref{setup}) of $\Aut(F)$, which turns out to be quite effective in our study of bounding $|\Aut(F)|$.
	More precisely, based on complete reducibility of linear representations of finite groups and the classical notion of primitive (projective) linear groups (see Section \ref{ss:abcan}), we introduce two associated exact sequences \eqref{eq:1associ}-\eqref{eq:2associ} for any finite linear group. Combining the associated exact sequences  with our bound for the order of abelian subgroups of $\Aut(F)$ (Theorem \ref{thm:diag}), we show that $|\Aut(F)|$ is at most the canonical bound $B(\Aut(F))$ of $\Aut(F)$ (Lemma \ref{lem:canonical}).  We say an invertible matrix $A$ is {\it semi-permutation} if $A$ is a diagonal matrix up to permutation of columns. Using Theorem \ref{thm:diag}, we immediately get an optimal bound for $|\Aut(F)|$ in the case $\Aut(F)$ consisting of semi-permutations (Theorem \ref{thm:mono}). 
	
	To handle the case $\Aut(F)$ consisting of  not only semi-permutations, we use both canonical bounds and Fermat-test ratios. As a consequence of Jordan's theorem, for each integer $r\ge 1$, there are only finitely many finite primitive groups in $\PGL(r,\C)$ (up to conjugation), and Collins \cite[Theorem A]{Col08} determined the maximum order ${\rm JC}(r)$ among such groups using the well-known classification of finite simple groups. Based on the definition of Fermat-test ratios and the explicit values of ${\rm JC}(r)$, we show that Fermat-test ratios have many nice properties (see Section \ref{ss:FTR}) which are crucial for our classification. In particular, using Fermat-test ratios, we quickly prove that Theorem \ref{thm:atleastFermat} holds if $n\ge 26$ or $d\ge 18$ (Theorem \ref{thm:boundnd}). Sections \ref{ss:refinedbounds}, \ref{ss:prim} and \ref{ss:proof} are devoted to the proof of Theorem \ref{thm:atleastFermat} for the remaining (finitely many) pairs $(n,d)$ satisfying $n\le 25$ and $d\le 17$. For such pairs of $(n,d)$, we classify $X_F$ with $|\Aut(F)|\ge |\Aut(F_{d}^n)|$ in three steps:  
	\begin{enumerate}
		\item[(1)] $\Aut(F)$ are primitive linear groups (Theorem \ref{thm:prim}); 
		\item[(2)] $\Aut(F)$ are imprimitive linear groups (Theorem \ref{thm:imprim}); 
		\item[(3)] $\Aut(F)$ are reducible linear groups (Theorem \ref{thm:reducible}). 
	\end{enumerate}
	In these steps, our classification is based on known classifications of finite primitive linear groups in small degrees (see Section \ref{ss:prim}) and close relations among smoothness and shape of the defining polynomial $F$, canonical bounds, and Fermat-test ratios (see Section \ref{ss:refinedbounds}).
	Especially, we show that the existence of monomials in $F$ closely related to different primitive constituents of $\Aut(F)$ considerably reduce the order $|\Aut(F)|$ (Lemmas \ref{lem:type2bound}, \ref{lem:classify}, \ref{lem:d1d2}, \ref{lem:boundtypeII}), which plays a key role in steps (2) and (3).
	
	After our paper appeared on arXiv, Louis Esser and Jennifer Li informed us that they are now obtaining a similar result to our Theorem \ref{thm:Main} but in a different method \cite{EL24}.  Both results rely on Collins’ work on bounding the size of finite primitive complex linear groups, though the approaches employed are otherwise different.
	
	We conclude the introduction by posing a question closely related to our work. Collins \cite[Theorems A and A$^\prime$]{Col08b} obtained the optimal values of modular analogues of Jordan constants
	for finite linear groups. Dolgachev--Duncan \cite[Theorem 1.1]{DD19} classified all possible automorphism groups of smooth cubic surfaces over an algebraically closed field of arbitrary characteristic. It would be interesting to study an analogue of Theorem \ref{thm:Main} in positive characteristic. 
	
	\begin{question}
		Let $k$ be an algebraically closed field of characteristic $p>0$ and let integers $n\ge 1$, $d\ge 3$ with $(n,d)\neq (1,3), (2,3), (2,4)$. Classify smooth hypersurfaces $X\subset \P_k^{n+1}$ of degree $d$ with maximum $|\Aut(X)|$.
	\end{question}
	
	\subsection*{Acknowledgements} We would like to thank Professors Keiji Oguiso and Zhiwei Zheng for valuable conversations. We would also like to thank the referees for helpful comments and suggestions. This work is partially supported by the National Natural Science Foundation of China (No. 12171351, No. 12071337, No. 11921001). 
	
	\section{Notation and conventions}\label{ss:notation}
	\noindent
	(2.1)\, Let $W$ be a complex vector space of dimension $r\ge 1$. Let $W^*$ denote the dual vector space of $W$ and $S(W^*)$ be the symmetric algebra of $W^*$. Each element in $S(W^*)$ can be viewed as a complex-valued function on $W$. Then $\GL(W)$ acts from the left (resp. right) on $W$ (resp. $S(W^*)$) via $(f, w)\mapsto f(w)$ (resp. $(F,f)\mapsto f(F)$). Here $f(F)$ is given by $f(F)(w)=F(f(w))$ for any $w\in W$. Note that the action of $\GL(W)$ on $W$ induces an action of $\GL(W)$ on $\P(W)$ given by $(f,[w])\mapsto [f(w)]$. Two elements $F,F'$ in $S(W^*)$ are called {\it isomorphic} to each other if they lie in the same orbit under the action of $\GL(W)$ on $S(W^*)$. 
	
	\medskip
	
	\noindent
	(2.2)\, If we choose a basis $(e_1,\ldots,e_r)$ of $W$, we have the dual basis $(e_1^*,\ldots,e_r^*)$ of $W^*$. Then elements $f\in \GL(W)$ (resp. $w\in W$, resp. $F\in S(W^*)$) may be naturally viewed as matrices $A=(a_{ij})\in\GL(r,\C)$ (resp. vectors $(w_1,\ldots,w_r)\in \C^r$, resp.  polynomials $F(x_1,\ldots,x_r)\in \C[x_1,\ldots,x_r]$). Under such identifications, the actions of $\GL(r,\C)$ on $\C^r$ and $\C[x_1,\ldots,x_r]$ mentioned above are given by $$(A, (w_1,\ldots,w_r))\mapsto \big ( \sum_{i=1}^{r}a_{1i}w_i,\ldots,\sum_{i=1}^{r}a_{ri}w_i\big ) \, \text{ and }\, (F,A)\mapsto A(F):=F\big (\sum_{i=1}^{r}a_{1i}x_i,\ldots,\sum_{i=1}^{r}a_{ri}x_i\big )$$ respectively.  We call $(e_1,\ldots,e_r)$ (resp. $(e_1^*,\ldots,e_r^*)$) {\it the underlying basis} of $\C^r\cong W$ (resp. ${\C^r}^*\cong W^*$). This means that we identify the $e_i$'s with the standard basis of $\C^r$. In particular, in this paper, we always assume that the underlying bases of $W$ and $W^*$ are dual to each other, and if we choose a new basis $(e_i^\prime)$ as the underlying basis of $W$,  it is understood that the underlying basis of $W^*$ is changed to the dual basis $({e_i^\prime}^*)$. 
	
	\medskip
	
	\noindent
	(2.3) \,We say the underlying basis $(e_i)$ of $W$ is {\it compatible with} a decomposition $W=W_1\oplus\cdots \oplus W_s$ if $e_{n_1+\cdots+n_{i-1}+j}\in W_i$ for $1\le i\le s$, $1\le j\le n_i$, where $n_i={\rm dim}(W_i)$ and $n_0=0$. In this case, for any integer $d\ge 0$, the space of $d$-forms $S^d(W^*)\subset S(W^*)$ admits a decomposition $$S^d(W^*)=\Oplus_{k_1+k_2+\cdots+k_s=d}S^{k_1}(W_1^*)\otimes S^{k_2}(W_2^*)\otimes\cdots\otimes S^{k_s}(W_s^*),$$ 
	which induces a decomposition $$F=\sum_{k_1+k_2+\cdots+k_s=d} F^{(k_1,k_2,\ldots,k_s)}$$
	for any $d$-form $F\in S^d(W^*)$. We call $F^{(k_1,k_2,\ldots,k_s)}$ the {\it $S^{k_1}(W_1^*)\otimes S^{k_2}(W_2^*)\otimes\cdots\otimes S^{k_s}(W_s^*)$-component} of $F$.
	
	\medskip
	
	\noindent
	(2.4)\, Note that if $A_1,A_2\in \GL(r,\C)$, we have $(A_1A_2)(F)=A_2(A_1(F))$. We often use the notions of forms and homogeneous polynomials interchangeably. The {\it automorphism group} $\Aut(F)$ of a homogeneous polynomial $F\in\C[x_1,\ldots,x_r]$ of degree $d\ge 1$ is given by $$\Aut(F):=\{A\in \GL(r,\C)\mid\, A(F)=F\}.$$
	We say $F$ is {\it smooth} if $(0,\ldots,0)$ is the only solution for $\frac{\partial F}{\partial x_1}=\cdots=\frac{\partial F}{\partial x_r}=0$. In particular, if $r\ge 3$, a smooth form $F$ defines a smooth hypersurface $X_F\subset\P^{r-1}$ of dimension $r-2$. 
	We denote by $\pi: \GL(r,\C)\rightarrow \PGL(r,\C)$ the projection map. For a subgroup $H$ of $\GL(r,\C)$ (resp. $\PGL(r,\C)$), we say $H$ {\it preserves} $F$ if $H=G$ (resp. $H=\pi(G)$) for some subgroup $G\subset \Aut(F)$. In this case, we call $F$ an {\it $H$-invariant form} of degree $d$. We say a monomial $\mathfrak{m}$ of degree $d$ is in $F$ (or $\mathfrak{m}\in F$) if the coefficient of $\mathfrak{m}$ is not zero in the expression of $F$. As in \cite[Definition 4.4]{OY19}, for a finite subgroup $H_1\subset \PGL(r,\C)$, we say $H_1$ admits {\it a lifting} (resp. {\it an $F$-lifting}) if there exists $\widetilde{H}_1\subset \GL(r,\C)$ (resp. $\widetilde{H}_1\subset  \Aut(F)$) such that  $\widetilde{H}_1\cong \pi (\widetilde{H}_1)$ and $\pi (\widetilde{H}_1)=H_1$.
	
	\medskip
	
	\noindent
	(2.5)\, If $F_1,\ldots,F_k\in \C[x_1,\ldots,x_r]$ and $a>0$, we use $(F_1,\ldots,F_k)^a\cdot \C[x_1,\ldots,x_r]$ to denote the $a$-th power of the ideal in $\C[x_1,\ldots,x_r]$ generated by $F_1,\ldots,F_k$. For a group $G_1$, we denote by $Z(G_1)$ its center. We use $N.H$ to denote a finite group which fits in a short exact sequence $1\rightarrow N\rightarrow N.H\rightarrow H\rightarrow 1.$
	Some symbols frequently used in this paper are as follows:
	\begin{flushleft}
		\begin{tabular}{rl} 
			$\xi_k$ & the primitive $k$-th root $e^{\frac{2\pi i}{k}}$ of unity, where $k$ is a positive integer;\\
			
			$I_n$ & the identity matrix of rank $n$; \\
			
			$\mathcal{I}_{r,d}$ & the subgroup of $\GL(r,\C)$ of order $d$ generated by the scalar matrix $\xi_d I_r$;\\
			
			$C_n$ & the cyclic group of order $n$;\\
			
			$S_n$ & the symmetric group of degree $n$;\\
			
			$A_n$ & the alternating group of degree $n$;\\
			
			$Q_n$ & the quaternion group of order $n$. 
		\end{tabular} 
	\end{flushleft}	
	\section{Abelian subgroups and canonical bounds}\label{ss:abcan}
	In this section, we introduce two associated exact sequences \eqref{eq:1associ}-\eqref{eq:2associ} for any finite linear group using primitive groups in \S \ref{ss:primandasso}. In \S \ref{ss:3.2}, we show an upper bound for the size of abelian subgroups of automorphism groups $\Aut(F)$ of smooth forms $F$ (Theorem \ref{thm:diag}), introduce the notion of canonical bound for $|\Aut(F)|$ (Definition \ref{def:canonical} and Lemma \ref{lem:canonical}), and prove that Fermat forms have the largest automorphism groups among smooth forms with automorphism groups consisting of semi-permutations (Theorem \ref{thm:mono}).
	\subsection{Primitive constituents and associated exact sequences}
	
	\label{ss:primandasso}
	First, we recall the notion of primitive linear groups (for more details, see e.g., \cite{Bli17,Col07,Col08}). 
	\begin{definition}
	Let $\rho: G\rightarrow \GL(r,\C)$ be an irreducible linear representation of a finite group $G$. We say $\rho$ is {\it primitive} (resp. {\it imprimitive}) if $\C^r$ cannot (resp. can) be decomposed as a direct sum of proper subspaces permuted under the action of $G$. A finite subgroup in $\GL(r,\C)$ is called {\it primitive} (resp. {\it irreducible}) if the underlying representation is primitive (resp. irreducible). We call a finite subgroup $H\subset \PGL(r,\C)$ {\it primitive} if $H=\pi(\widetilde{H})$ for some finite primitive subgroup $\widetilde{H}\subset \GL(r,\C)$.
	\end{definition}
	A normal abelian subgroup of a finite primitive linear group $G$ is contained in the center $Z(G)$ of $G$ (see \cite[Lemma 1]{Col08}). Let $\widetilde{H}$ be a finite primitive subgroup in $\GL(r,\C)$ and $H:=\pi(\widetilde{H})\subset\PGL(r,\C)$. Since the center $Z(\widetilde{H})\subset \widetilde{H}$ is a subgroup of $Z(\GL(r,\C))=\{\lambda I_r\mid \lambda\in \C^\times\}$, we have $H\cong \widetilde{H}/Z(\widetilde{H})$. Using primitive groups, we introduce two exact sequences associated to finite linear groups. For this purpose and later use, we fix some notations.
	
	\begin{setup}\label{setup}
		Let $G$ be a finite subgroup of $\GL(r,\C)$ with $r\ge 1$. Then $\C^r$ admits a decomposition $\C^r=V_1\oplus \cdots \oplus V_m$ as a direct sum of irreducible $G$-stable subspaces for some integer $m\ge 1$. For each $1\le i\le m$, there is an integer $k_i\ge 1$ and a decomposition $V_i=W_{i1}\oplus \cdots \oplus W_{ik_i}$ as a direct sum of subspaces permuted (transitively) under the action of $G$ such that $${\rm Stab}_G(W_{ij}):=\{g\in G\mid \, g(W_{ij})=W_{ij}\}$$ acts primitively (but not necessarily faithfully) on $W_{ij}$ (see \cite[Lemma 1]{Col07}). Such permutations form transitive subgroups $K_{i}$ ($i=1,\ldots,m$) of $S_{k_i}$, and there is a natural group homomorphism $\psi_i: G\rightarrow K_i$. Let $s:=k_1+\cdots+k_m$. 
		The dimensions $r_{ij}$ of the subspaces $W_{ij}$ are called {\it subdegrees} of $G$. Let $$H_{ij}:=\{[h]\mid\, h=g|_{W_{ij}},g\in{\rm Stab}_G(W_{ij}) \}\subset \PGL(W_{ij}).$$ We call the $H_{ij}$'s the {\it primitive constituents} of $G$ (of degree $r_{ij}$) {\it belonging to} $V_i$. 
	\end{setup} 
	
	\begin{definition}\label{def:associ}
		Under $\bf{Set\textrm{-}up}$ \ref{setup}, we call the intersection $$P:=\bigcap_{1\le i\le m, 1\le j\le k_i} {\rm Stab}_G(W_{ij})$$ the {\it principal subgroup} of $G$. Consider the group homomorphism
			$$\psi: G\longrightarrow K_{1}\times\cdots\times K_{m},\;\; g\mapsto (\psi_1(g),\ldots,\psi_m(g)).$$
			Note that ${\rm Ker}(\psi)=P$.
		Then we call the following exact sequence the {\it first associated} exact sequence of $G$
			\begin{equation}\label{eq:1associ}
				1\longrightarrow P\longrightarrow G\xrightarrow{\,\;\psi\;\,} K_1\times\cdots\times K_m.
		\end{equation}
		Let $\widetilde{H}_{ij}:=\{h\mid\, h=g|_{W_{ij}},g\in{\rm Stab}_G(W_{ij}) \}$. Then we have surjective group homomorphisms
		$\phi_{ij}: {\rm Stab}_G(W_{ij})\rightarrow \widetilde{H}_{ij}$, $g\mapsto g|W_{ij}$
		and
		$\pi_{ij}: \widetilde{H}_{ij}\rightarrow H_{ij}$, $h\mapsto [h].$
		Thus, $\widetilde{H}_{ij}$ and $H_{ij}$ are primitive subgroups of $\GL(W_{ij})$ and $\PGL(W_{ij})$ respectively.  Consider the group homomorphism $$\phi: P\longrightarrow H_{11}\times\cdots\times H_{1k_1}\times\cdots\times H_{mk_m},\;\; g\mapsto ([g|W_{11}],\ldots,[g|W_{1k_1}],\ldots,[g|W_{mk_m}]).$$
		Let $N:={\rm Ker}(\phi)$. Then we call the following exact sequence the {\it second associated} exact sequence of $G$ 
		\begin{equation}\label{eq:2associ}
				1\longrightarrow N\longrightarrow P\xrightarrow{\,\;\phi\;\,} H_{11}\times\cdots\times H_{1k_1}\times\cdots\times H_{mk_m}.
		\end{equation}
	\end{definition} 
	
	Clearly, the decomposition of the $V_i$'s as in $\bf{Set\textrm{-}up}$ \ref{setup} guarantees the existence of the first and second associated exact sequences, which is a key point of our approach.
	After choosing a basis for each $W_{ij}$, we obtain a basis of $\C^r$ and we may identify $\GL(W_{ij})$ (resp. $\PGL(W_{ij})$) with $\GL(r_{ij},\C)$ (resp. $\PGL(r_{ij},\C)$). Then each element $A$ in $P$ can be viewed as a block diagonal matrix $A={\rm diag}(A_{11},\ldots,A_{mk_m})$ with blocks $A_{ij}\in \GL(r_{ij},\C)$.  Note that $N$ is abelian and its elements are of the form ${\rm diag}(\lambda_{11}I_{r_{11}},\lambda_{12}I_{r_{12}},\ldots,$ $\lambda_{mk_m}I_{r_{mk_m}})$, where $\lambda_{ij}$ are non-zero complex numbers.
	
	\begin{definition}\label{def:primconssequ}
		Under $\bf{Set\textrm{-}up}$ \ref{setup}, let $l(G):=(r_1^\prime,\ldots,r_s^\prime)$ be the  $s$-tuple of positive integers such that $r_1'\ge r_2'\ge\cdots \ge r_s'$ and there exists a permutation $\sigma\in S_s$ satisfying $(r_{\sigma(1)}',\ldots,r_{\sigma(s)}')=(r_{11},r_{12},\ldots,r_{mk_m})$. We call $l(G)$ the {\it subdegree sequence} of $G$ (of length $s$) and the $s$-tuple $(H_{11},H_{12},\ldots,H_{m(k_{m}-1)},H_{mk_m})$ a {\it primitive-constituent sequence} of $G$.  The $m$-tuple of integers $(k_1,\dots,k_m)$ is called an  {\it intrinsic multiplicity sequence} of $G$.   
	\end{definition}
	
		Throughout the paper, for any finite subgroup of  $\GL(r, \C)$, we fix a primitive-constituent sequence once and for all.
	
	\subsection{Abelian subgroups and canonical bounds}\label{ss:3.2}
	It is known that for a smooth homogeneous polynomial $F(x_1,\dots,x_r)$ of degree $d\ge 3$, every abelian subgroup of $\Aut(F)$ has order at most $d^{r}$ (see \cite[Lemma 3.1]{HS80}, \cite[Abelian Lemma]{Sza96}). The following more refined bound
	for abelian subgroups of the automorphism groups of smooth forms plays a key role in our study.
	
	\begin{theorem}\label{thm:diag}
		Let $F=F(x_1,\dots,x_{r})$ be a smooth homogeneous polynomial of degree $ d\geq3$, where $r\geq2$. Fix positive integers $r_1,\dots,r_m$ with $\sum_{1\leq i\leq m}{r_i}=r$, where $1\le m\le r$. Let $N\subset \Aut(F)$ be an abelian subgroup generated by matrices of the form $\Diag(\lambda_1 I_{r_1}, \lambda_2 I_{r_2},\dots,\lambda_m I_{r_m})$. Then $|N|\le d^m$.
	\end{theorem}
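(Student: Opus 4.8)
The plan is to realize $N$ as a finite subgroup of the block torus
$T:=\{\Diag(\lambda_1 I_{r_1},\dots,\lambda_m I_{r_m}):\lambda_i\in\C^\times\}\cong(\C^\times)^m$
and to compute $|N|$ through the characters of $T$ that are trivial on $N$. Since products of such block-scalar matrices are again block-scalar, the hypothesis that $N$ is generated by them forces $N\subseteq T$. Every algebraic character of $T$ has the form $\chi_e(\Diag(\lambda_1 I_{r_1},\dots))=\lambda_1^{e_1}\cdots\lambda_m^{e_m}$ for a unique $e=(e_1,\dots,e_m)\in\Z^m$, so $\widehat{T}\cong\Z^m$. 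Because $N$ is finite and embeds into $(\C^\times)^m$, the $m$ coordinate characters have trivial common kernel and hence separate points, so they generate $\widehat{N}$; thus restriction yields a surjection $\Z^m=\widehat{T}\twoheadrightarrow\widehat{N}$ whose kernel is the full-rank lattice $L:=\{e\in\Z^m:\chi_e|_N=1\}$, giving $|N|=|\widehat{N}|=[\Z^m:L]$. It therefore suffices to exhibit a full-rank sublattice $L'\subseteq L$ with $[\Z^m:L']\le d^m$.

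To feed vectors into $L$, I use the invariance of $F$. Grouping variables by the block decomposition, each monomial $\mathfrak{m}\in F$ has a multidegree $(d_1,\dots,d_m)\in\Z_{\ge 0}^m$ with $\sum_i d_i=d$, where $d_i$ is the total exponent of the block-$i$ variables. For $A=\Diag(\lambda_1 I_{r_1},\dots)\in N$ one computes $A(\mathfrak{m})=(\prod_i\lambda_i^{d_i})\,\mathfrak{m}$, so $A(F)=F$ together with linear independence of monomials forces $\prod_i\lambda_i^{d_i}=1$; hence the multidegree vector of every monomial of $F$ lies in $L$. The next step is to select one good monomial per block. By smoothness the partials of $F$ have no common zero off the origin, so $\nabla F(p_k)\neq 0$ at each coordinate point $p_k$; inspecting this gradient shows that $F$ must contain a monomial $x_k^{d}$ or $x_k^{d-1}x_j$ (some $j\neq k$)---the standard shape lemma for smooth forms. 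For each block $i$ I fix a variable $x_{k_i}$ in it and let $v_i\in L$ be the multidegree vector of such a monomial; then $(v_i)_i\ge d-1$ and $\sum_{i'\neq i}(v_i)_{i'}\le 1$, since the factor $x_{k_i}^{d-1}$ contributes $d-1$ to block $i$ while the remaining factor contributes at most $1$ to a single block.

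Let $M$ be the $m\times m$ integer matrix with rows $v_1,\dots,v_m$. The crux is to control $\det M$ and to know it is nonzero. Here $d\ge 3$ is used decisively: the diagonal entries satisfy $M_{ii}\ge d-1\ge 2>1\ge\sum_{i'\neq i}|M_{ii'}|$, so $M$ is strictly diagonally dominant and therefore invertible, which makes $L':=\Z v_1+\cdots+\Z v_m\subseteq L$ a full-rank sublattice. Moreover each row has Euclidean norm at most $\sqrt{(d-1)^2+1}\le d$ (and exactly $d$ in the pure-power case), so Hadamard's inequality gives $[\Z^m:L']=|\det M|\le\prod_i\|v_i\|_2\le d^m$. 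Combining with $L'\subseteq L$ yields $|N|=[\Z^m:L]\le[\Z^m:L']\le d^m$.

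The only genuinely delicate point is guaranteeing that the $m$ selected multidegree vectors are linearly independent \emph{and} have determinant bounded by $d^m$; both are settled by the fact that the shape lemma forces a near-pure power $x_{k_i}^{d-1}(\cdots)$ for each block, which pins the mass of $v_i$ onto its own diagonal entry. Strict diagonal dominance then delivers independence and the per-row norm bound delivers the determinant estimate, with each step resting on $d\ge 3$. I expect the rest of the argument (the duality $|N|=[\Z^m:L]$ and the index comparison) to be routine, so the write-up will concentrate on the smoothness shape lemma and the diagonal-dominance estimate. As a sanity check, the case $m=r$ (all $r_i=1$) recovers the classical bound $|N|\le d^{r}$, and $m=1$ recovers $|N|\le d$.
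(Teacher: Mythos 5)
Your proof is correct, but it takes a genuinely different route from the paper's. The paper adapts Szab\'o's B\'ezout Lemma: it passes to $X_F\subset\P^{r-1}$, intersects translates $Us^{-1}$ of $U=X_F\cap T$ with the closure $Y\cong\P^{m-1}$ of the orbit of a general point under the block torus, applies B\'ezout's theorem to get $|{\rm Lin}(X_F)\cap H|\le d^{m-1}$, and then multiplies by $d$ to account for scalars, treating $r=2$ separately by direct calculation. You instead work entirely on the character lattice of the block torus $(\C^\times)^m$: $|N|=[\Z^m:L]$ where $L$ is the lattice of characters trivial on $N$; invariance of $F$ places the block multidegree of every monomial of $F$ in $L$; and smoothness (the shape lemma: each variable $x_k$ contributes $x_k^d$ or $x_k^{d-1}x_j$ to $F$) supplies vectors $v_1,\dots,v_m\in L$ whose matrix is strictly diagonally dominant---this is where $d\ge 3$ enters---hence a full-rank sublattice $L'$ with $[\Z^m:L']=|\det M|\le d^m$ by Hadamard's inequality. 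All the individual steps check out: the eigenvector argument forcing $\prod_i\lambda_i^{d_i}=1$ on monomials of $F$, the separation/generation duality for the finite group $N$, and the norm bound $\sqrt{(d-1)^2+1}\le d$. What your approach buys is that it is elementary (no intersection theory), it handles $r=2$ uniformly rather than as a separate case, and with a minor rearrangement (injecting $N$ into $\Hom(\Z^m/L',\C^\times)$ instead of invoking duality for a group assumed finite) it yields finiteness of $N$ as a byproduct; as written, your upfront use of finiteness is covered by the finiteness of $\Aut(F)$ cited earlier in the paper, so there is no circularity. What the paper's route buys is brevity given B\'ezout; indeed the paper itself notes that a longer B\'ezout-free proof appears in the arXiv version \cite{YYZ24}, which is in the same lattice-theoretic spirit as yours.
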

	
	\begin{proof}
		We prove the theorem by adapting the proof of \cite[B\'{e}zout Lemma]{Sza96}. If $r=2$, then $|N|\le d^m$ by direct calculation. If $r\ge 3$, then $X:=X_F\subset \P^{r-1}$ is a smooth hypersurface of degree $d$. Fix $x=(a_1:\cdots :a_r)\in X$ with $a_i\neq 0 $ for all $i$. Let $T\subset \PGL(r,\C)$ be the (abelian) subgroup generated by all invertible diagonal matrices. The stabilizer $T_x$ of $x$ is trivial and we identify $T$ with the orbit $Tx$ of $x$ via the $T$ action. We set $U=X\cap T$ and we have ${\rm Lin}(X)\cap T=\bigcap_{s\in U}\, Us^{-1}$. Let $H\subset T$ be the subgroup generated by all matrices of the form $\Diag(\lambda'_1 I_{r_1}, \lambda'_2 I_{r_2},\dots,\lambda'_m I_{r_m})$, where $\lambda'_i\in \C^\times$ ($1\le i\le m$). Let $Y\subset \P^{r-2}$ denote the closure of the orbit $Hx$ of $x$. Note that $Y\cong \P^{m-1}$. Then ${\rm Lin}(X)\cap H$=${\rm Lin}(X)\cap T\cap H$=$\bigcap_{s\in U}\, (Us^{-1}\cap H)$. The cardinality of $\bigcap_{s\in U}\, (Us^{-1}\cap H)$ is at most that of $\bigcap_{s\in U}\, (Us^{-1}\cap Y)$. The finiteness of $\bigcap_{s\in U}\, (Us^{-1}\cap Y)$ implies that for $m-1$ general points $s_0,s_1,...,s_{m-2}$, the intersection $\bigcap_{i=0}^{m-2}\, (Us_i^{-1}\cap Y)$ is already finite. The cardinality of this intersection is at most ${\rm deg}(\bigcap_{i=0}^{m-2}(X s_i^{-1} \cap Y))$. By applying B\'{e}zout's Theorem (\cite[Example 8.4.6]{Ful98}), we obtain $|{\rm Lin}(X)\cap H|\le d^{m-1}$. Then $|N|\le d\cdot |\pi(N)|\le d^m$ since $\pi(N)\subseteq ({\rm Lin}(X)\cap H)$.\end{proof}
	The proof, using the ideas in \cite{CS95} and \cite{Sza96}, is suggested by a referee. A longer proof of Theorem \ref{thm:diag} without using B\'{e}zout's Theorem can be found in \cite[Theorem 3.4]{YYZ24}. 
	Now we are ready to introduce a bound for the size of groups of automorphisms of smooth forms.
	
	\begin{definition}\label{def:canonical}
		Let $G$ be a subgroup of the automorphism group $\Aut(F)$ of a smooth form $F=F(x_1,\ldots,x_r)$ of degree $d\ge 3$, where $r\ge 1$. Let $(H_1,\dots, H_s)$ be a primitive-constituent sequence of $G$ with an intrinsic multiplicity sequence $(k_1,\dots,k_m)$. We define the {\it canonical bound} of $G$ by
		$$B(G,F)=d^s\cdot\prod_{i=1}^{s}|H_i|\cdot\prod_{j=1}^{m}{k_j !}.$$
		To simplify notation, we often use $B(G)$ to denote $B(G, F)$ if there is no confusion.
	\end{definition}
	
	Combining Theorem \ref{thm:diag} with the two associated exact sequences \eqref{eq:1associ} and \eqref{eq:2associ} of finite linear groups, we have the following observation.
	
	\begin{lemma}\label{lem:canonical}
		Let $F=F(x_1,\ldots,x_r)$ be a smooth form of degree $d\ge 3$, where $r\ge 1$. If $G$ is a subgroup of $\Aut(F)$, then $|G|\le B(G,F)$.
	\end{lemma}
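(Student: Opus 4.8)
\emph{Proof proposal.} The plan is to read off the bound by chaining together the two associated exact sequences \eqref{eq:1associ} and \eqref{eq:2associ} of $G$, controlling each factor that appears. Applying $\bf{Set\textrm{-}up}$ \ref{setup} to $G\subset\GL(r,\C)$ produces the principal subgroup $P$, the transitive permutation groups $K_i\subset S_{k_i}$, the abelian kernel $N=\Ker(\phi)$, and the primitive constituents $H_{ij}\subset\PGL(W_{ij})$, where $s=k_1+\cdots+k_m$. Relabeling the $H_{ij}$ as $H_1,\dots,H_s$ in accordance with the fixed primitive-constituent sequence of $G$, the quantity to be reached is $B(G,F)=d^s\cdot\prod_{i=1}^{s}|H_i|\cdot\prod_{j=1}^{m}k_j!$.

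First I would use the first associated exact sequence \eqref{eq:1associ}, which identifies $P$ with $\Ker(\psi)$ and embeds $G/P$ into $K_1\times\cdots\times K_m$; since each $K_i$ is a subgroup of $S_{k_i}$, this yields $|G|\le|P|\cdot\prod_{i=1}^{m}|K_i|\le|P|\cdot\prod_{i=1}^{m}k_i!$. Next I would apply the second associated exact sequence \eqref{eq:2associ}, which identifies $N$ with $\Ker(\phi)$ and embeds $P/N$ into $H_{11}\times\cdots\times H_{mk_m}$, giving $|P|\le|N|\cdot\prod_{i,j}|H_{ij}|=|N|\cdot\prod_{l=1}^{s}|H_l|$. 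The remaining and key step is to bound $|N|$. As recorded just after Definition \ref{def:associ}, $N$ is abelian and each of its elements is a block-scalar matrix $\Diag(\lambda_{11}I_{r_{11}},\dots,\lambda_{mk_m}I_{r_{mk_m}})$ with exactly $s$ scalar blocks whose sizes sum to $r$. Since $N\subseteq P\subseteq G\subseteq\Aut(F)$, Theorem \ref{thm:diag} applies (with its integer $m$ taken to be our $s$) and delivers $|N|\le d^s$. Combining the three inequalities gives
$$|G|\le|N|\cdot\prod_{l=1}^{s}|H_l|\cdot\prod_{i=1}^{m}k_i!\le d^s\cdot\prod_{l=1}^{s}|H_l|\cdot\prod_{i=1}^{m}k_i!=B(G,F),$$
which is the assertion.

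I do not expect a serious obstacle here, since the argument is a direct composition of the two exact sequences with the trivial bound $|K_i|\le k_i!$ and the refined abelian bound of Theorem \ref{thm:diag}; the only subtlety is bookkeeping, namely checking that the number of scalar blocks of $N$ is exactly $s$ so that the exponent $d^s$ matches the canonical bound. One genuine edge case is the hypothesis $r\ge2$ in Theorem \ref{thm:diag}: when $r=1$ that theorem does not directly apply, but this case is immediate, since then $s=1$, the single constituent $H_1$ is trivial in $\PGL(1,\C)$, and $G\subseteq\Aut(F)=\{\lambda\in\C^\times\mid\lambda^d=1\}$ already has order at most $d=d^s$. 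Handling $r=1$ by hand and otherwise invoking Theorem \ref{thm:diag} completes the plan.
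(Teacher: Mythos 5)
Your proposal is correct and follows exactly the route the paper intends: the paper states Lemma \ref{lem:canonical} as an immediate consequence of combining Theorem \ref{thm:diag} with the associated exact sequences \eqref{eq:1associ} and \eqref{eq:2associ}, which is precisely the chain $|G|\le|P|\cdot\prod_i k_i!\le|N|\cdot\prod_l|H_l|\cdot\prod_i k_i!\le d^s\cdot\prod_l|H_l|\cdot\prod_i k_i!$ that you wrote out. Your explicit treatment of the $r=1$ edge case (where Theorem \ref{thm:diag} does not apply but $\Aut(F)\cong C_d$ directly) is a detail the paper leaves implicit, and it is handled correctly.
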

	
	The following example gives an illustration for computing canonical bounds.
	
	\begin{example}\label{ex:canonical}
		Following \cite[Example 2.1 (17)]{OY19}, let $F=\left(\left(x_1^4+x_2^4\right)+\left(2+4\xi_3^2\right)x_1^2x_2^2\right)x_3+\left(-\left(x_1^4+x_2^4\right)+\left(2+4\xi_3^2\right)x_1^2x_2^2\right)x_4+x_3^4x_4+x_4^4x_3+x_5^5$ and the subgroup $\widetilde{G}$ of $\Aut(F)$
		$$\widetilde{G}=\left\langle 
		\begin{pmatrix} 
			\xi_8^3&0&0&0&0 \\ 
			0&\xi_8&0&0&0 \\ 
			0&0&0&1&0 \\ 
			0&0&1&0&0 \\ 
			0&0&0&0&1 \\ 
		\end{pmatrix},\begin{pmatrix} 
			-\frac{1}{\sqrt{2}}\xi_8&\frac{1}{\sqrt{2}}\xi_8&0&0&0 \\ 
			\frac{1}{\sqrt{2}}\xi_8^3& \frac{1}{\sqrt{2}}\xi_8^3&0&0&0 \\ 
			0&0&\xi_3&0&0 \\ 
			0&0&0&\xi_3^2&0 \\ 
			0&0&0&0&1 \\ 
		\end{pmatrix},\begin{pmatrix} 
			-1&0&0&0&0 \\ 
			0&1&0&0&0 \\ 
			0&0&1&0&0 \\ 
			0&0&0&1&0 \\ 
			0&0&0&0&\xi_5  \\ 
		\end{pmatrix} \right\rangle.$$
		We denote by $(e_1,e_2,e_3,e_4,e_5)$ the  of $\C^5$. Then using notations in $\bf{Set\textrm{-}up}$ \ref{setup}, we have  $\C^5=V_1\oplus V_2\oplus V_3$, $V_1=W_{11}$, $V_2=W_{21}\oplus W_{22}$, $V_3=W_{31}$, where $V_1=\langle e_1, e_2\rangle$, $V_2=\langle e_3, e_4\rangle$, $V_3=\langle e_5\rangle$, $W_{21}=\langle e_3\rangle$, $W_{22}=\langle e_4\rangle$. The linear group $\widetilde{G}$ has $4$ primitive constituents $H_{ij}$. Moreover, $S_4\cong H_{11}\subset \PGL(2,\C)$, $|H_{21}|=|H_{22}|=|H_{31}|=1$, the subdegree sequence $l(\widetilde{G})=(2,1,1,1)$, and $\widetilde{G}$ has an intrinsic multiplicity sequence $(1,2,1)$. Then we have $B(\widetilde{G},F)=5^4\cdot 24\cdot 2=30000>480=|\widetilde{G}|$, as predicted by Lemma \ref{lem:canonical}.
	\end{example}
	
	We say an invertible matrix $A$ is {\it semi-permutation} if $A$ is a diagonal matrix up to permutation of columns. Note that up to linear change of coordinates, a finite linear group consists of semi-permutation matrices if its subdegree sequence is $(1,\dots,1)$. Using Theorem \ref{thm:diag}, we obtain an optimal bound for the size of semi-permutation automorphism groups of smooth forms.
	
	\begin{theorem}\label{thm:mono} 
		Let  $F=F(x_1,\dots,x_r)$ be a smooth form of degree $d\geq3$ and $r\geq2$. If $\Aut(F)$ consists of semi-permutations, then $|\Aut(F)|\leq d^{r}\cdot r!$. The equality occurs if and only if $F$ is isomorphic to the Fermat form $F_{d}^{r-2}=x_1^d+x_2^d+\cdots+x_r^d$.
	\end{theorem}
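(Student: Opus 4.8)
The plan is to exploit the obvious homomorphism recording the underlying permutation of a semi-permutation matrix, and then to analyze the diagonal part of $\Aut(F)$ through the exponent lattice of $F$. First I would consider the group homomorphism $\rho\colon \Aut(F)\to S_r$ sending each semi-permutation (monomial) matrix to the permutation of the coordinates it induces; this is well defined and a homomorphism because a product of monomial matrices is again monomial, with the composed underlying permutation. Its kernel $N$ is precisely the subgroup of diagonal matrices in $\Aut(F)$, which is abelian and generated by matrices $\Diag(\lambda_1,\dots,\lambda_r)$, i.e.\ of the block form in Theorem \ref{thm:diag} with $m=r$ and all $r_i=1$. Hence $|N|\le d^{r}$ by Theorem \ref{thm:diag}, and since $\rho(\Aut(F))\subseteq S_r$ we obtain $|\Aut(F)|=|N|\cdot|\rho(\Aut(F))|\le d^{r}\cdot r!$, proving the bound; moreover equality forces $|N|=d^{r}$ (and $\rho$ surjective). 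For the Fermat form the diagonal automorphisms $x_i\mapsto\xi_d^{a_i}x_i$ together with the coordinate permutations already give a subgroup of order $d^{r}\cdot r!$, so equality does hold there, and it remains to prove the converse.

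Next I would translate the condition $|N|=d^{r}$ into a statement about which monomials occur. Writing $F=\sum_{\alpha}c_\alpha x^\alpha$ and $M(F)=\{\alpha:c_\alpha\neq 0\}$, a diagonal matrix $\Diag(\lambda_1,\dots,\lambda_r)$ lies in $N$ iff $\lambda^\alpha:=\prod_i\lambda_i^{\alpha_i}=1$ for every $\alpha\in M(F)$. Identifying $(\C^\times)^r$ with $\Hom(\Z^r,\C^\times)$, this identifies $N$ with $\Hom(\Z^r/\Lambda,\C^\times)$, where $\Lambda\subseteq\Z^r$ is the subgroup generated by $M(F)$. Finiteness of $N$ forces $\Lambda$ to have full rank $r$, and then $|N|=[\Z^r:\Lambda]$. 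Thus equality in the theorem is equivalent to $[\Z^r:\Lambda]=d^{r}$.

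The final step combines this index computation with smoothness. A standard consequence of smoothness is that for each coordinate $x_i$ the polynomial $F$ contains either $x_i^{d}$ or a monomial $x_i^{d-1}x_j$ with $j\neq i$: otherwise all partial derivatives of $F$ would vanish at the nonzero point $e_i$ (using Euler's identity for $\partial_i F(e_i)$), contradicting smoothness. Hence $\Lambda$ contains $r$ vectors $v_1,\dots,v_r$ with $v_i=d\,e_i$ or $v_i=(d-1)e_i+e_j$, so $\Lambda\supseteq\Lambda_0:=\langle v_1,\dots,v_r\rangle$. Letting $V$ be the matrix with rows $v_i$, I would show by a direct determinant estimate that $0<|\det V|\le d^{r}$, with equality if and only if every $v_i=d\,e_i$; this follows for instance from Hadamard's inequality, since a row $(d-1)e_i+e_j$ has Euclidean length $\sqrt{(d-1)^2+1}<d$ whereas a row $d\,e_i$ has length $d$, or equivalently by computing the eigenvalues of the functional $0/1$ matrix $V-(d-1)I$. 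Since $\Lambda_0$ has full rank we get $d^{r}=[\Z^r:\Lambda]\le[\Z^r:\Lambda_0]=|\det V|\le d^{r}$, so all inequalities are equalities: every $v_i=d\,e_i$, equivalently $x_i^{d}\in F$ for all $i$, and $\Lambda=\Lambda_0=d\,\Z^r$. Consequently every $\alpha\in M(F)$ lies in $d\,\Z^r$; as $\alpha$ has nonnegative entries summing to $d$, this forces $\alpha=d\,e_i$. Therefore $F=\sum_{i=1}^{r}c_i x_i^{d}$ with all $c_i\neq0$, and rescaling each coordinate by a $d$-th root of $c_i$ shows $F$ is isomorphic to the Fermat form $F_d^{r-2}$.

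The main obstacle is the determinant estimate together with its sharp equality characterization, which is what pins down that no ``mixed'' monomial $x_i^{d-1}x_j$ can survive when $|N|$ is maximal; everything else is bookkeeping once the lattice dictionary $|N|=[\Z^r:\Lambda]$ and the smoothness lemma are in place.
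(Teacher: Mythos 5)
Your proof is correct, and its classification half takes a genuinely different route from the paper's. The first half coincides with the paper: both split $\Aut(F)$ into the diagonal kernel $N$ and a permutation quotient inside $S_r$, and both invoke Theorem \ref{thm:diag} to get $|N|\le d^r$ and hence $|\Aut(F)|\le d^r\cdot r!$. For the equality case, however, the paper argues by symmetry: equality forces the quotient to be all of $S_r$, so either $x_i^d\in F$ for every $i$ (and then $|N|=d^r$ pins down $N$ as the full group of $d$-th root diagonal matrices, whose only invariant monomials are the $x_i^d$, giving Fermat), or no $x_i^d$ occurs, in which case smoothness (via \cite[Proposition 3.3]{OY19}) together with $2$-transitivity of $S_r$ forces every $x_i^{d-1}x_j$ into $F$, and a direct computation shows the resulting diagonal group $N_2$ has order strictly less than $d^r$, a contradiction. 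You instead dualize: $N\cong\Hom(\Z^r/\Lambda,\C^\times)$, so $|N|=[\Z^r:\Lambda]$, and the smoothness dichotomy (which you prove directly; it is exactly the content of the proposition the paper cites) produces a sublattice $\Lambda_0\subseteq\Lambda$ generated by rows $v_i\in\{d\,e_i\}\cup\{(d-1)e_i+e_j\}$; your matrix $V$ is strictly diagonally dominant (equivalently, your spectral bound on $V-(d-1)I$ applies), so $\det V\neq 0$, and Hadamard gives $[\Z^r:\Lambda]\le|\det V|\le d^r$, strictly whenever a mixed row must be chosen. Equality then forces $\Lambda=d\,\Z^r$, hence $F$ Fermat after rescaling. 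Two remarks on what each approach buys: your argument never uses surjectivity of the quotient onto $S_r$, so it proves the stronger statement that $|N|=d^r$ alone forces Fermat for smooth $F$; moreover, since full rank of $\Lambda$ already follows from $\det V\neq0$, your lattice chain $[\Z^r:\Lambda]\le[\Z^r:\Lambda_0]\le d^r$ independently reproves the $d^r$ bound on the diagonal part without Theorem \ref{thm:diag} (i.e.\ without B\'{e}zout), whereas the paper's route is shorter given the cited proposition and stays at the level of elementary group-theoretic counting.
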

	
	\begin{proof}
		We identify $S_r$ with the group of all permutations of the coordinates $x_1,...,x_r$. Let $N_1\subseteq \Aut(F)$ be the (abelian) subgroup consisting of all diagonal matrices in $\Aut(F)$. Since $\Aut(F)$ only has semi-permutations, $N_1$ is  normal in $\Aut(F)$ and the quotient group $Q_1:=\Aut(F)/N_1$ is isomorphic to a subgroup of $S_r$. Then by Theorem \ref{thm:diag}, we have $|\Aut(F)|\le d^{r} \cdot r!$ and the equality holds only if $|N_1| =d^{r}$ and  $Q_1\cong S_{r}$. We assume now $|\Aut(F)|= d^{r} \cdot r!$. If $x_1^d\in F$, then $x_i^d\in F$ for all $i\in\{1,2,...,r\}$ by $Q_1\cong S_{r}$. Thus, for every $A={\rm diag}(\lambda_1,...,\lambda_r)\in N_1$, we have $\lambda_i^d=1$ for all $i$ by $A(F)=F$. Then $N_1=\{{\rm diag}(\lambda_1,...,\lambda_r)|\, \lambda_1^d=\cdots=\lambda_r^d=1\},$ which implies that $\{x_i^d\}_{1\leq i\leq r}$ are the only monomials preserved by $N_1$ and $F$ is a Fermat form, up to replacing $x_i$ by their non-zero multiples. If $x_1^d\notin F$, then by \cite[Proposition 3.3]{OY19} and $Q_1\cong S_{r}$, we have $x_i^{d-1}x_j\in F$ for all $i,j$ with $i\neq j$. Then $N_1\subseteq N_2:=\{{\rm diag}(\lambda_1,...,\lambda_r)|\, \lambda_i^{d-1}\lambda_j=1 \text{ for all $i,j$ with $i\neq j$} \},$
		which is a contradiction since $d^r=|N_1|\le |N_2|<d^r$ by direct computation.
	\end{proof}
	
	\section{Fermat-test ratios and proof of most cases}\label{ss:FTR}
	A famous theorem of Jordan \cite{Jor78} states that, for every integer $r\geq1$,  there exists a constant  such that any finite subgroup of $\GL(r,\C)$ has an abelian normal subgroup of index at most this constant. Consequently, for each integer $r\ge 1$, there are only finitely many finite primitive groups in $\PGL(r,\C)$ (up to conjugation), and Collins \cite[Theorem A]{Col08} determined the maximum order ${\rm JC}(r)$ among such groups. Note that $\JC(r)={\rm J}(r)$ for $r\geq 71$, but $\JC(r)$ and ${\rm J}(r)$ are often different for $r<71$ (see \cite[Theorems A, B and D]{Col07}). In \S \ref{ss:4.1}, based on canonical bounds and the values of ${\rm JC}(r)$, we introduce the notion of Fermat-test ratio (Definition \ref{def:FTR}) and we show that Fermat-test ratios have many nice properties which will be frequently used in our study. In \S \ref{ss:4.2}, using Fermat-test ratios, we quickly prove that our main result Theorem \ref{thm:Main} holds if $n\ge 26$ or $d\ge 18$ (Theorem \ref{thm:boundnd}).
	
	  \subsection{Fermat-test ratios}\label{ss:4.1}
		As a generalization of subdegree sequences of linear groups, the following concepts will be used in the definition of Fermat-test ratios.
	
	\begin{definition}\label{def:subseq}
		Let $s$ be a positive integer. We call an $s$-tuple $l=(r_1^\prime,\dots,r_s^\prime)$ of positive integers $r_i^\prime$ a {\it subdegree sequence} of length $s$ if $r_1^\prime\ge r_2^\prime\ge\cdots\ge r_s^\prime$. The integers $r_i^\prime$ are called ${\it subdegrees}$ of $l$.  The {\it total degree} $v(l)$ of $l$ is defined by $v(l)=r_1^\prime+\cdots+r_s^\prime$.  Suppose $l$ has exactly $m$ distinct subdegrees $r_1>r_2>\cdots>r_m$, where $m\ge 1$. We call  $r_1^{k_1} r_2^{k_2}\cdots r_m^{k_m}$ the {\it exponential type} of $l$, where $k_i$ are the multiplicities of $r_i$ in $l$. We call $(k_1,\ldots,k_m)$ the {\it multiplicity sequence} of $l$. We often denote $l$ by its exponential type if there is no confusion. Clearly, we have two equations: $v(l)=\sum_{i=1}^{m} r_i k_i$, $s=\sum_{i=1}^{m}k_i$. 
		For a subdegree sequence $l=(r_1^\prime,\ldots,r_s^\prime)$, if $\mathcal{H}=(H_1,\ldots,H_s)$ is an $s$-tuple of finite primitive subgroups $H_i\subset \PGL(r_i^\prime,\C)$, we call $l(\mathcal{H}):=l$  the {\it subdegree sequence} of $\mathcal{H}$. 
	\end{definition}
	In practice, we often use $\mathcal{H}$ to denote a primitive-constituent sequence.
	
		\begin{example}
		Let $\widetilde{G}$ be as in Example \ref{ex:canonical}. Then the subdegree sequence $l(\widetilde{G})=(2,1,1,1)$ has the exponential type $2^1 1^3$, and the multiplicity sequence of $l(\widetilde{G})$ is $(1,3)$ which is different from the intrinsic multiplicity sequence $(1,2,1)$ of $\widetilde{G}$. The total degree $v(l(\widetilde{G}))$ is 5.
	\end{example}	
	
	\begin{remark}
		We allow $r^0$ to appear in the exponential type of $l$ even if $r$ is not a subdegree of $l$. Under such convention, the equations in Definition \ref{def:subseq} still hold. For example, by $l$ being the subdegree sequence of the exponential type $5^03^22^01^3$ we mean $l=(3,3,1,1,1)$.
	\end{remark}
	
	For later use, we define sum/difference between subdegree sequences.
	
	\begin{definition}
		Let $l_1$ and $l_2$ be two subdegree sequences of exponential types $r_1^{a_1} r_2^{a_2}\cdots r_m^{a_m}$ and $r_1^{b_1} r_2^{b_2}\cdots r_m^{b_m}$, where $a_i,b_i\ge 0$, $m\ge 1$, $r_1>\cdots >r_m\ge 1$. We denote by $l_1+l_2$ the subdegree sequence of the exponential type $r_1^{a_1+b_1} r_2^{a_2+b_2}\cdots r_m^{a_m+b_m}$. If $a_i\ge b_i$ for all $i$, we denote by $l_1-l_2$ the subdegree sequence of the exponential type $r_1^{a_1-b_1} r_2^{a_2-b_2}\cdots r_m^{a_m-b_m}$.
	\end{definition}	
	
	We recall the values of the function ${\rm JC}(r)$ (\cite[Theorem A]{Col08}). For $r=10,11$ or $r\geq13$, $\JC(r)=(r+1)!$; for the remaining cases, ${\rm JC}(r)$ is as follows: 
	\begin{table}[H]\rm
		\renewcommand\arraystretch{1.1}
		\begin{tabular}{ccccccccccc}
			\hline
			$r$&1&2&3&4&5&6&7&8&9&12\\
			\hline
			${\rm JC}(r)$&1&60&360&25920&25920&6531840&1451520&348364800&4199040&448345497600\\
			\hline
		\end{tabular}
	\end{table}
	\noindent
	Now we give the definition of Fermat-test ratios which is crucial for our classification of smooth hypersurfaces with large automorphism groups.
	
	\begin{definition}\label{def:FTR}
		Let $l=(r_1^\prime,\ldots,r_s^\prime)$ be a subdegree sequence of length $s\ge 1$ with the exponential type $r_1^{k_1}r_2^{k_2}\cdots r_m^{k_m}$ and total degree $r\ge 1$. For an integer $d\ge 3$, we define the {\it Fermat-test ratio} of the pair $(l,d)$ by
		\begin{equation*}
			R(l,d)=\frac{d^s\cdot\prod\limits_{i=1}^{s}{{\rm JC}(r_i^\prime)}\cdot\prod\limits_{j=1}^{m}{k_j!}}{d^{r}\cdot r!}.
		\end{equation*}
		Let $\mathcal{H}=(H_1,\ldots,H_s)$ be an $s$-tuple of finite primitive subgroups $H_i\subset \PGL(r_i^\prime,\C)$ with the subdegree sequence $l(\mathcal{H})=l$. We define the {\it Fermat-test ratio} of the pair $(\mathcal{H},d)$ by
		$$R(\mathcal{H},d)=\frac{d^s\cdot\prod\limits_{i=1}^{s}{|H_i|}\cdot\prod\limits_{j=1}^{m}{k_j!}}{d^{r}\cdot r!}.$$
		Let $F=F(x_1,\ldots,x_r)$ be a smooth form of degree $d\ge 3$, where $r\ge 1$. We call $R(l(\Aut(F)),d)$ the {\it Fermat-test ratio of $F$}, denoted by $R(F)$.
	\end{definition}
	
		The following simple fact is useful for our study. 
	
	\begin{lemma}\label{lem:factorial}
		Let $k_1,\ldots,k_m$ be positive integers with $m\ge 1$. Then $\prod_{j=1}^{m}{k_j !}\le (k_1+\cdots+k_m)!.$
		The equality occurs if and only if $m=1$.
	\end{lemma}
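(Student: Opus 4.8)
The plan is to prove the inequality $\prod_{j=1}^{m} k_j! \le \left(\sum_{j=1}^m k_j\right)!$ by induction on $m$, reducing everything to the single two-factor case which follows from a basic binomial-coefficient estimate. First I would dispose of the base case $m=1$, where both sides equal $k_1!$ and equality holds trivially. This already identifies one direction of the equality claim.

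For the inductive step, the key observation is the two-variable statement: for positive integers $a,b$ one has $a!\,b!\le (a+b)!$, which is equivalent to $\binom{a+b}{a}\ge 1$, and in fact $\binom{a+b}{a}\ge a+1\ge 2$ whenever $b\ge 1$, so the inequality is \emph{strict} when both $a,b\ge 1$. To run the induction, set $K=k_1+\cdots+k_{m-1}$ and apply the inductive hypothesis to get $\prod_{j=1}^{m-1}k_j!\le K!$; then
\begin{equation*}
\prod_{j=1}^{m}k_j! = \Big(\prod_{j=1}^{m-1}k_j!\Big)\cdot k_m! \le K!\cdot k_m! \le (K+k_m)! = \Big(\sum_{j=1}^{m}k_j\Big)!,
\end{equation*}
where the last inequality is the two-variable fact applied with $a=K$, $b=k_m$ (both positive since all $k_j\ge 1$).

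For the equality characterization, I would argue that for $m\ge 2$ the final step $K!\,k_m!\le (K+k_m)!$ is strict: since $m\ge 2$ both $K\ge 1$ and $k_m\ge 1$, so $\binom{K+k_m}{k_m}\ge 2$, forcing $\prod_{j=1}^m k_j!<\big(\sum_j k_j\big)!$. Combined with the trivial equality at $m=1$, this shows equality occurs if and only if $m=1$. I do not expect any genuine obstacle here; the only point needing mild care is confirming strictness of the binomial bound when both arguments are at least one, which is immediate from $\binom{a+b}{a}\ge a+1$.
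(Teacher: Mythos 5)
Your proof is correct. The paper actually states this lemma without any proof, treating it as a simple fact, so there is no argument of the authors to compare against; your induction on $m$, resting on the two-variable inequality $a!\,b!\le (a+b)!$ (equivalently $\binom{a+b}{a}\ge 1$, strict when $a,b\ge 1$ since $\binom{a+b}{a}\ge a+1\ge 2$), is the standard way to supply the omitted details, and your equality analysis covering both directions is complete. As a side remark, the whole statement also follows in one step from the observation that the multinomial coefficient $\frac{(k_1+\cdots+k_m)!}{k_1!\cdots k_m!}$ is a positive integer (it counts ordered partitions of a set of size $k_1+\cdots+k_m$ into blocks of sizes $k_1,\ldots,k_m$), and that for $m\ge 2$ there are at least two such partitions, giving strictness; this is essentially your argument with the induction absorbed into the combinatorial interpretation.
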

	
	By Lemmas \ref{lem:canonical}, \ref{lem:factorial} and definition of Fermat-test ratios, we have the following observation.
	
	\begin{lemma}\label{lem:FTRandGF}
		Let $F=F(x_1,\ldots,x_r)$ be a smooth form of degree $d\ge 3$ and $r\ge 1$. Let $\mathcal{H}:=(H_1,\ldots,H_s)$ be a primitive-constituent sequence of $\Aut(F)$ with $r_1\geq\cdots \geq r_s$, where $r_i$ is the degree of $H_i$. Then we have 
		$$R(F)\geq \frac{R(F)\cdot |H_i|}{{\rm JC}(r_i)}\geq R(\mathcal{H},d)\geq \frac{B(\Aut(F))}{d^r\cdot r!}\ge \frac{|\Aut(F)|}{d^r\cdot r!}.$$
		In particular, if $|\Aut(F)|\geq d^{r}\cdot r!$, then $R(F)\geq R(\mathcal{H},d)\ge 1$.
	\end{lemma}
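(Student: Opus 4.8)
The plan is to verify the displayed chain one inequality at a time by reading each factor directly off the definitions, drawing on exactly two external inputs: the defining property of ${\rm JC}(r)$ as the maximal order of a finite primitive subgroup of $\PGL(r,\C)$, and Lemma~\ref{lem:canonical}. First I would record the three quantities in terms of common data. Writing the subdegrees as $r_1\ge\cdots\ge r_s$ (the degrees of the $H_i$), letting $(k_1',\dots,k_{m'}')$ be the multiplicity sequence of the subdegree sequence $l(\mathcal{H})=l(\Aut(F))$ obtained by grouping equal subdegrees, and letting $(\kappa_1,\dots,\kappa_m)$ be the intrinsic multiplicity sequence of $\Aut(F)$, one has $R(F)=\bigl(d^s\prod_{j}{\rm JC}(r_j)\prod_{l}k_l'!\bigr)/(d^r\,r!)$, while $R(\mathcal{H},d)=\bigl(d^s\prod_{j}|H_j|\prod_{l}k_l'!\bigr)/(d^r\,r!)$ and $B(\Aut(F))=d^s\prod_{j}|H_j|\prod_{i}\kappa_i!$.

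For the first two inequalities, the key point is that each $H_i$ is a finite primitive subgroup of $\PGL(r_i,\C)$, so the definition of ${\rm JC}(r_i)$ yields $|H_i|\le{\rm JC}(r_i)$. The inequality $R(F)\ge R(F)\,|H_i|/{\rm JC}(r_i)$ is then immediate from $|H_i|/{\rm JC}(r_i)\le 1$ together with $R(F)\ge 0$. The inequality $R(F)\,|H_i|/{\rm JC}(r_i)\ge R(\mathcal{H},d)$ follows because, after cancelling the common factor at index $i$, the two sides differ only in that the first carries $\prod_{j\ne i}{\rm JC}(r_j)$ where the second carries $\prod_{j\ne i}|H_j|$, and $|H_j|\le{\rm JC}(r_j)$ for every $j\ne i$.

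The only step that is not pure bookkeeping—and the place where Lemma~\ref{lem:factorial} is needed—is $R(\mathcal{H},d)\ge B(\Aut(F))/(d^r\,r!)$. The subtlety is that $R(\mathcal{H},d)$ carries the factor $\prod_{l}k_l'!$ built from the distinct-subdegree multiplicities, whereas $B(\Aut(F))$ carries $\prod_{i}\kappa_i!$ built from the finer intrinsic multiplicity sequence. Since the summands $W_{ij}$ inside a fixed irreducible $G$-stable space $V_i$ all have the same dimension (Set-up~\ref{setup}), each intrinsic multiplicity $\kappa_i$ contributes $\kappa_i$ copies of one subdegree value, and the distinct-subdegree multiplicity $k_l'$ is precisely the sum of those $\kappa_i$ whose subdegree equals the $l$-th distinct value. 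Applying Lemma~\ref{lem:factorial} within each such group gives $\prod_{i}\kappa_i!\le\prod_{l}k_l'!$, whence $B(\Aut(F))\le R(\mathcal{H},d)\cdot d^r\,r!$. The last inequality is immediate from Lemma~\ref{lem:canonical}, which supplies $|\Aut(F)|\le B(\Aut(F))$, and the concluding ``in particular'' clause follows by tracing the hypothesis $|\Aut(F)|\ge d^r\,r!$ through the chain. I expect the multiplicity-matching in the third step to be the only genuine obstacle; the remaining steps are direct consequences of the definitions and the bound $|H_i|\le{\rm JC}(r_i)$.
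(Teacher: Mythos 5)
Your proof is correct and follows exactly the route the paper intends: the paper states this lemma as an observation following from Lemma~\ref{lem:canonical}, Lemma~\ref{lem:factorial}, and the definitions, and your argument fills in precisely those steps, including the one genuinely non-trivial point (reconciling the intrinsic multiplicity sequence in $B(\Aut(F))$ with the distinct-subdegree multiplicities in $R(\mathcal{H},d)$ via Lemma~\ref{lem:factorial}, using that the $W_{ij}$ inside a fixed $V_i$ share a common dimension). No gaps; your write-up is in fact more detailed than the paper's.
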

	
	By Definition \ref{def:FTR}, we find some basic properties of Fermat-test ratios.
	
	\begin{lemma}\label{lem:ratioproperties} 
		Let $l$ be a subdegree sequence of length $s\ge 1$. Let $d,d'$ be integers at least $3$. Then $R(l,d)/R(l,d')=(d'/d)^{v(l)-s}$.
		In particular, if $d>d'$, then $R(l,d)\le R(l,d')$.
	\end{lemma}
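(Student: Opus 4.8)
The plan is to prove Lemma \ref{lem:ratioproperties} by a direct computation from the definition of the Fermat-test ratio $R(l,d)$, since $l$ is fixed and only the degree $d$ varies. Writing $l$ with exponential type $r_1^{k_1}\cdots r_m^{k_m}$, length $s=\sum_{j=1}^m k_j$, and total degree $v(l)=\sum_{i=1}^s r_i'$, recall that
\begin{equation*}
R(l,d)=\frac{d^s\cdot\prod_{i=1}^s \JC(r_i')\cdot\prod_{j=1}^m k_j!}{d^{v(l)}\cdot v(l)!}.
\end{equation*}
The key observation is that every factor in this expression \emph{except} the two powers of $d$ depends only on $l$, not on the degree. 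First I would isolate the degree-dependent part: collecting the powers of $d$ in numerator and denominator gives a factor $d^{s-v(l)}=d^{-(v(l)-s)}$, while the remaining quantity $C(l):=\prod_{i=1}^s \JC(r_i')\cdot\prod_{j=1}^m k_j!\,/\,v(l)!$ is a positive constant depending only on $l$.

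The main step is then to form the quotient $R(l,d)/R(l,d')$. Since $C(l)$ is common to both and cancels, we are left with
\begin{equation*}
\frac{R(l,d)}{R(l,d')}=\frac{d^{-(v(l)-s)}}{d'^{-(v(l)-s)}}=\Big(\frac{d'}{d}\Big)^{v(l)-s},
\end{equation*}
which is exactly the claimed identity. No appeal to the earlier structural results is needed here; the whole content is the cancellation of all $l$-dependent factors.

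For the final assertion, I would observe that $v(l)-s=\sum_{i=1}^s(r_i'-1)\ge 0$ always, since each subdegree $r_i'\ge 1$. Hence the exponent $v(l)-s$ is a nonnegative integer. If $d>d'\ge 3$, then $0<d'/d<1$, so $(d'/d)^{v(l)-s}\le 1$ (with equality precisely when $v(l)=s$, i.e.\ all subdegrees equal $1$). Therefore $R(l,d)\le R(l,d')$, as desired.

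This lemma poses essentially no obstacle: it is a bookkeeping identity whose only subtlety is correctly tracking which factors are $d$-independent. The one point worth stating explicitly is the nonnegativity of the exponent $v(l)-s$, which guarantees the monotonicity direction and relies on the convention $r_i'\ge 1$ built into Definition \ref{def:subseq}; everything else is immediate from the definition.
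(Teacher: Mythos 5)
Your proof is correct and matches the paper's approach: the paper states this lemma without proof as an immediate consequence of Definition \ref{def:FTR}, and your direct computation---isolating the $d$-dependent factor $d^{s-v(l)}$ and noting $v(l)-s=\sum_{i}(r_i'-1)\ge 0$---is exactly the intended argument. The explicit remark on nonnegativity of the exponent is a sensible addition that the paper leaves implicit.
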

	
	The properties of Fermat-test ratios in Lemmas \ref{lem:ratioproperties2}, \ref{lem:addr0} and \ref{lem:boundl1} make computing/bounding values of Fermat-test ratios quite feasible.
	
	\begin{lemma}\label{lem:ratioproperties2}
		Let $l_1$, $l_2$ be subdegree sequences and let $d\geq3$ be a positive integer. Then we have
		\begin{enumerate}
			\item [(1)] $\tbinom{v({l_1+ l_2})}{v({l_1})}\cdot R(l_1+ l_2,d)\geq R(l_1,d)\cdot R(l_2,d)$. The equality occurs if and only if the subdegrees of $l_1$ and $l_2$ are disjoint;
			\item [(2)] Suppose the subdegrees of $l_1$ and $l_2$ are disjoint. Let $l_2'$ be a subdegree sequence. If $v(l_2)\geq v(l_2')$ and $R(l_2,d)\leq R(l_2',d)$, then $R(l_1+ l_2,d)\leq R(l_1+ l_2',d)$; and
			\item [(3)] Let $s_i$ be the length of $l_i$ ($i=1,2$). Suppose  $s_1\geq s_2$ and $v(l_1)\leq v(l_2)$. If $R(l_1,d)>R(l_2,d)$, then $R(l_1,d+1)>R(l_2,d+1)$.
		\end{enumerate}
	\end{lemma}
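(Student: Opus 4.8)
The plan is to reduce all three statements to direct algebraic manipulations of the defining formula
$$R(l,d)=\frac{d^s\cdot\prod_{i=1}^{s}{\rm JC}(r_i^\prime)\cdot\prod_{j=1}^{m}k_j!}{d^{r}\cdot r!},$$
exploiting the fact that when we pass from $l_1$ and $l_2$ to their sum $l_1+l_2$, the three ingredients (the power of $d$, the product of ${\rm JC}$-values, and the factorial factors) each transform in a controlled way. First I would set up notation: write $l_1$ and $l_2$ with exponential types $r_1^{a_1}\cdots r_m^{a_m}$ and $r_1^{b_1}\cdots r_m^{b_m}$ over a common list of distinct subdegrees $r_1>\cdots>r_m$ (allowing zero exponents), so that $l_1+l_2$ has type $r_1^{a_1+b_1}\cdots r_m^{a_m+b_m}$. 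The key observation is that the product $\prod_i {\rm JC}(r_i')$ is \emph{multiplicative} under the $+$ operation (it simply multiplies the two products together), and likewise $d^{s}$ and $d^{v(l)}$ are multiplicative in the exponents, which cancel cleanly in the ratio. The only non-multiplicative ingredient is the pair of factorials $\prod_j k_j!$ in the numerator and $r!$ in the denominator.

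For part (1), after canceling the multiplicative pieces I expect the inequality to collapse to a statement comparing factorials. The numerator factorials contribute $\prod_j (a_j+b_j)!$ for $l_1+l_2$ versus $\prod_j a_j!\cdot\prod_j b_j!$ for the product $R(l_1,d)R(l_2,d)$, while the denominator factorials give $v(l_1+l_2)!$ versus $v(l_1)!\,v(l_2)!$. The claimed inequality with the binomial coefficient $\binom{v(l_1+l_2)}{v(l_1)}$ should therefore reduce, via the standard identity $\binom{a+b}{a}=\frac{(a+b)!}{a!\,b!}$ applied to the total degrees, to the elementary inequality $\prod_j(a_j+b_j)!\ge \prod_j a_j!\,b_j!$, i.e. a product of binomial coefficients $\prod_j\binom{a_j+b_j}{a_j}\ge 1$. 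Each such binomial coefficient is at least $1$, and equals $1$ precisely when $a_j b_j=0$ for every $j$; this is exactly the condition that the subdegrees of $l_1$ and $l_2$ are disjoint, giving the equality characterization.

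For part (2), I would use the equality case of part (1): since the subdegrees of $l_1$ and $l_2$ are disjoint, $R(l_1+l_2,d)=\binom{v(l_1+l_2)}{v(l_1)}^{-1}R(l_1,d)R(l_2,d)$, and similarly for $l_2'$ if its subdegrees are disjoint from those of $l_1$ (one should first check the hypotheses permit reducing to this case, or otherwise apply the inequality in the appropriate direction). Comparing $R(l_1+l_2,d)$ with $R(l_1+l_2',d)$ then amounts to combining the hypothesis $R(l_2,d)\le R(l_2',d)$ with a monotonicity check on the binomial coefficients that is governed by $v(l_2)\ge v(l_2')$. For part (3), I would apply Lemma \ref{lem:ratioproperties} to express each $R(l_i,d+1)$ as $(d/(d+1))^{v(l_i)-s_i}R(l_i,d)$; the hypotheses $s_1\ge s_2$ and $v(l_1)\le v(l_2)$ force $v(l_1)-s_1\le v(l_2)-s_2$, so since $d/(d+1)<1$ the factor applied to $R(l_1,d)$ is at least as large as that applied to $R(l_2,d)$, and the strict inequality $R(l_1,d)>R(l_2,d)$ is preserved. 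The main obstacle I anticipate is purely bookkeeping in part (2): correctly tracking the binomial-coefficient factors and verifying that the disjointness and total-degree hypotheses are exactly what is needed to push the inequality through in the right direction, rather than any deep combinatorial difficulty.
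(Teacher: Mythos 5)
Your proposal is correct and follows essentially the same route as the paper: for (1) you write both sequences over a common list of distinct subdegrees, cancel the multiplicative factors $d^s$, $d^{v(l)}$ and $\prod_i \mathrm{JC}(r_i')$, and reduce to $\prod_j\binom{a_j+b_j}{a_j}\ge 1$ with equality exactly when $a_jb_j=0$ for all $j$, which is precisely the paper's argument via its Lemma on factorials. Your treatments of (2) (equality case of (1) for $l_1+l_2$, the inequality direction for $l_1+l_2'$, and monotonicity of $\binom{v(l_1)+x}{v(l_1)}$ in $x$) and of (3) (via $R(l,d+1)=R(l,d)\,(d/(d+1))^{v(l)-s}$) are exactly the ``similar'' arguments the paper omits, and they go through as you describe.
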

	\begin{proof}
			We only give the proof of case (1) and the other cases are similar. Suppose the exponential type of $l:=l_1+l_2$ is $r_1^{k_1}r_2^{k_2}\cdots r_m^{k_m}$. Then we may write $l_1=r_1^{k'_1}r_2^{k'_2}\cdots r_m^{k'_m}$ and $l_2=r_1^{k''_1}r_2^{k''_2}\cdots r_m^{k''_m},$ where $k'_i\geq 0$, $k''_i \geq 0$ and $k_i=k'_i+k''_i>0$.
    		From Lemma \ref{lem:factorial}, we conclude that $k_i!\geq k'_i! \cdot k''_i!$ and the equality holds if and only if $k'_i\cdot k''_i=0$. By Definition \ref{def:FTR}, we have $$R(l,d)=R(l_1,d)R(l_2,d)\cdot\frac{v(l_1)!\cdot v(l_2)!}{v(l_1+l_2)!}\cdot \prod_{i=1}^{m}\frac{ k_i!}{k_i'!\cdot k_i''!}\geq R(l_1,d)R(l_2,d)\cdot\frac{1}{\tbinom{v({l_1+ l_2})}{v({l_1})}},$$ and the equality holds if and only if $k'_i\cdot k''_i=0$ for all $i$ (i.e., the subdegrees of $l_1$ and $l_2$ are disjoint).
	\end{proof}
	\begin{lemma}\label{lem:addr0}
		Let $l$ be a subdegree sequence containing $r_0>1$ with multiplicity $k_0\ge 1$. Then, for $d\geq3$, $R(l,d)>R(l+(r_0),d)$  holds if one of the following conditions is true:
		\begin{enumerate}
			\item [(1)] $r_0=2$ and $k_0\geq 5$;
			\item [(2)] $r_0=4$ and $k_0\geq 2$;
			\item [(3)] $r_0=3$ or $r_0\geq 5$.
		\end{enumerate}
		In particular, if $l$ is of exponential type $r_0^{k_0}$ with $r_0\notin\{2,3,4,5,6,8\}$, then $R(l,d)<1$.
	\end{lemma}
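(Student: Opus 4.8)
The plan is to convert the inequality $R(l,d)>R(l+(r_0),d)$ into a single explicit comparison of integers and then verify it in the finitely many cases that matter. First I would compute the ratio straight from Definition~\ref{def:FTR}. Writing $r=v(l)$ for the total degree of $l$, the passage from $l$ to $l+(r_0)$ increases the length by $1$ and the total degree by $r_0$, inserts one extra factor $\JC(r_0)$ into the product of Jordan--Collins values, and replaces $k_0!$ by $(k_0+1)!$. All other factors cancel, giving
\begin{equation*}
\frac{R(l+(r_0),d)}{R(l,d)}=\frac{\JC(r_0)\,(k_0+1)}{d^{\,r_0-1}\prod_{i=1}^{r_0}(r+i)}.
\end{equation*}
Thus $R(l,d)>R(l+(r_0),d)$ is equivalent to $\JC(r_0)\,(k_0+1)<d^{\,r_0-1}\prod_{i=1}^{r_0}(r+i)$.

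Next I would reduce to a worst case. Since $l$ contains $r_0$ with multiplicity $k_0$ we have $r\ge r_0k_0$, and the right-hand side is strictly increasing in $r$; because $r_0\ge 2$ it is also strictly increasing in $d$. Hence it suffices to treat $r=r_0k_0$ and $d=3$, i.e. to prove
\begin{equation*}
\JC(r_0)\,(k_0+1)<3^{\,r_0-1}\prod_{i=1}^{r_0}(r_0k_0+i).
\end{equation*}
For fixed $r_0$ the left side is linear in $k_0$ while the right side is a polynomial of degree $r_0\ge 2$; a short computation shows the quotient of the two sides is strictly decreasing in $k_0$ (indeed the factor with $i=r_0$ in the telescoping product equals $\tfrac{k_0+1}{k_0+2}$, already cancelling the prefactor $\tfrac{k_0+2}{k_0+1}$, while the other factors are $<1$). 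So for each $r_0$ it is enough to verify the smallest admissible $k_0$, and these minimal values are exactly the thresholds in the statement: $k_0=5$ for $r_0=2$, $k_0=1$ for $r_0=3$, $k_0=2$ for $r_0=4$, and $k_0=1$ for $r_0\ge 5$.

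It then remains to check each base case. Substituting the tabulated $\JC$-values disposes of $r_0\in\{2,3,4\}$ and of the finitely many exceptional degrees $r_0\in\{5,6,7,8,9,12\}$ by direct arithmetic. For the remaining degrees $r_0\in\{10,11\}$ and $r_0\ge 13$ one has $\JC(r_0)=(r_0+1)!$, so the inequality becomes $2(r_0+1)<3^{\,r_0-1}\binom{2r_0}{r_0}$, which holds with enormous room to spare. For the final assertion take $l=r_0^{k_0}$ with $r_0\notin\{2,3,4,5,6,8\}$; as $r_0>1$ this means $r_0\in\{7,9,10,11,\dots\}$, all covered by case~(3). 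Iterating the inequality just proved shows $R(r_0^{k},d)$ is decreasing in $k$, so $R(r_0^{k_0},d)\le R((r_0),d)=\JC(r_0)/\bigl(d^{\,r_0-1}r_0!\bigr)$, which in turn is decreasing in $d$ and hence at most $\JC(r_0)/(3^{\,r_0-1}r_0!)$. One verifies this is $<1$ directly for $r_0\in\{7,9,12\}$ and via $r_0+1<3^{\,r_0-1}$ (valid for $r_0\ge 3$) for $r_0\in\{10,11\}$ and $r_0\ge 13$.

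The arithmetic is routine; the real care goes into the two monotonicity reductions. Establishing that the quotient in the reduced inequality is genuinely decreasing in $k_0$ is the crux, since it is what lets a single boundary check stand in for all $k_0$ and simultaneously explains why the excluded small values ($k_0\le 4$ for $r_0=2$, and $k_0=1$ for $r_0=4$) are precisely where the inequality first fails. The only other subtlety is organizing the degrees $r_0$ into the finite list of exceptional Jordan--Collins degrees versus the tail $r_0\ge 13$ (together with $r_0=10,11$), where the clean formula $\JC(r_0)=(r_0+1)!$ makes the estimate transparent.
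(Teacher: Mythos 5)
Your proposal is correct and follows essentially the same route as the paper: both reduce the claim to the explicit ratio $R(l+(r_0),d)/R(l,d)=\mathrm{JC}(r_0)(k_0+1)/\bigl(d^{r_0-1}\prod_{i=1}^{r_0}(v(l)+i)\bigr)$, shrink to the worst case $v(l)=r_0k_0$, $d=3$, use monotonicity in $k_0$ (your cancellation of the $i=r_0$ factor against $(k_0+2)/(k_0+1)$ is the same mechanism the paper exploits), check the threshold cases against the tabulated $\mathrm{JC}$-values with $\mathrm{JC}(r_0)=(r_0+1)!$ handling the tail, and obtain the final assertion by iterating down to $R((r_0),3)<1$. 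The only cosmetic difference is that you derive the ratio formula directly from Definition \ref{def:FTR} rather than via Lemma \ref{lem:ratioproperties2}(1).
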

	\begin{proof}
		Let $l_2$ be the subdegree sequence of the exponential type $r_0^{k_0}$. We set $l_1:=l-l_2$. By (1) in Lemma \ref{lem:ratioproperties2}
		we have
		$$\tbinom{v({l})+r_0}{(k_0+1)r_0}R(l+ (r_0),d)= R(l_1,d) R(l_2+(r_0),d),\;\; \tbinom{v({l})}{k_0r_0} R(l,d)= R(l_1,d) R(l_2,d).$$
		Then the ratio
		\begin{equation}\label{eq:decreasing}
			\lambda:=\frac{R(l+ (r_0),d)}{R(l,d)}=\frac{\tbinom{v({l})}{k_0r_0}}{\tbinom{v({l})+r_0}{(k_0+1)r_0}}\cdot\frac{R(l_2+ (r_0),d)}{R(l_2,d)}=\frac{v(l)!}{(v(l)+r_0)!}\cdot\frac{{\rm JC}(r_0)(k_0+1)}{d^{r_0-1}}.
		\end{equation}
		Since $\lambda$ decreases as $v(l_1)$ increases or $d$ increases, we may take $v(l_1)=0$ and $d=3$. Then 
		\begin{equation}\label{eq:lambdabound}
			\lambda \leq\frac{(r_0k_0)!}{(r_0(k_0+1))!}\cdot\frac{(k_0+1){\rm JC}(r_0)}{3^{r_0-1}}=\frac{(r_0k_0)!}{(r_0(k_0+1)-1)!}\cdot\frac{{\rm JC}(r_0)}{r_03^{r_0-1}}.
		\end{equation}
		We notice that the value in the right hand side of the inequality decreases as $k_0$ increases. By \eqref{eq:lambdabound}, if $r_0=2$, $k_0=5$, we have $\lambda\leq\frac{10!\cdot60}{11!\cdot2\cdot3}=\frac{10}{11}<1$. Thus (1) follows.
		Similarly, one may check if $(r_0,k_0)=(3,1)$, $(4,2)$, $(5,1)$, $(6,1)$, $(7,1)$, $(8,1)$, $(9,1)$, $(12,1)$, we have $\lambda<1$. 
		
		For $r_0=10,11$ or $r_0\geq 13$, we have $\lambda<\frac{(r_0+1)!}{3^{r_0-1}r_0^{r_0}}<\frac{r_0+1}{3^{r_0-1}}\cdot\frac{r_0^{r_0}}{r_0^{r_0}}<1$.
		By Lemma \ref{lem:ratioproperties} and Definition \ref{def:FTR}, we find that $R((r_0),d)\leq R((r_0),3)<1$ for $r_0=7$, or $r_0\geq 9$. Therefore, $R(r_0^{k_0},d)<1$ if $r_0=7$ or $r_0\ge 9$.
	\end{proof}
	
	\begin{remark}
		 For the proof of Lemma \ref{lem:boundl1}, we record the values of some Fermat-test ratios. $R(2^{13},3)=16000000000/12649365729$, $R(3^1,3)=20/3$, $R(3^3,3)=200/189$, $R(4^1,3)=40$, $R(4^2,3)=320/7$, $R(4^3,3)=2560/231$, $R(5^1,3)=8/3$, $R(6^1, 3)=112/3$, $R(6^{2},3)=896/297$, $R(8^1,3)=320/81$.
	\end{remark}

	\begin{lemma}\label{lem:boundl1}
		Let $d\ge 3$ and $m\ge 1$ be integers. Let $l_1$ be a subdegree sequence of exponential type $r_1^{k_1}\cdots r_m^{k_m}$ with $r_i>1$, $k_i>0$ and $R(r_i^{k_i},d)\ge1$ for all $1\le i\le m$. Then we have $R(l_1,d)\le R(l_1^\prime,d)$, where
		\begin{equation*}
			l_1^\prime \text{ is of exponential type}
			\begin{cases}
				4^1 & \quad \text{if } v(l_1)=4 ~~and~~d=3 \\
				2^a {~~with ~~}a=v(l_1)/2 & \quad \text{if } v(l_1){ ~~is~~even ~~and ~~}(v(l_1),d)\neq(4,3)\\
				2^a {~~with ~~}a=(v(l_1)-1)/2 & \quad \text{if } v(l_1){ ~~is~~odd.}\\
			\end{cases}
		\end{equation*}
		In particular, if $v(l_1)\geq28$, we have $R(l_1,d)<1$.
	\end{lemma}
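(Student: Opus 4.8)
The plan is to prove the stronger statement that, among all admissible subdegree sequences of a fixed total degree, the power‑of‑two sequence $2^{\lfloor v/2\rfloor}$ maximizes the Fermat‑test ratio (with the single exception of $4^{1}$ when $v(l_1)=4$ and $d=3$), and then to read off the final assertion from the values of $R(2^{a},3)$. First I would reduce to a finite list of blocks. The ``in particular'' clause of Lemma~\ref{lem:addr0} shows that $R(r_i^{k_i},d)\ge 1$ forces $r_i\in\{2,3,4,5,6,8\}$, and the ratios recorded just before the statement pin down the admissible multiplicities at $d=3$, namely
\begin{equation*}
r^{k}\in\{2^{1},\dots,2^{13}\}\cup\{3^{1},3^{2},3^{3}\}\cup\{4^{1},4^{2},4^{3}\}\cup\{5^{1}\}\cup\{6^{1},6^{2}\}\cup\{8^{1}\}.
\end{equation*}
Since $R(l,d)\le R(l,3)$ for $d\ge 3$ by Lemma~\ref{lem:ratioproperties}, the admissible blocks for larger $d$ form a subset of this list, so $l_1$ is a disjoint union of finitely many such blocks carrying distinct subdegrees.

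The engine is the replacement principle of Lemma~\ref{lem:ratioproperties2}(2): if a subsequence $l_2$ is disjoint from the remaining part $l_1$ and satisfies $v(l_2)\ge v(l_2')$ and $R(l_2,d)\le R(l_2',d)$, then substituting $l_2'$ for $l_2$ does not lower the ratio. As the blocks of our sequence have distinct subdegrees, they are pairwise disjoint, so I can process them one at a time, replacing each non‑two block by a power of two of no larger total degree and no smaller ratio:
\begin{equation*}
3^{2}\!\to\!2^{3},\ \ 3^{3}\!\to\!2^{4},\ \ 4^{2}\!\to\!2^{4},\ \ 4^{3}\!\to\!2^{6},\ \ 6^{1}\!\to\!2^{3},\ \ 6^{2}\!\to\!2^{6},\ \ 8^{1}\!\to\!2^{4},\ \ 3^{1}\!\to\!2^{1},\ \ 5^{1}\!\to\!2^{2}.
\end{equation*}
Each inequality is checked at $d=3$ from the recorded ratios and persists for all $d\ge 3$, because in every case $v(l_2)-s(l_2)\ge v(l_2')-s(l_2')$, so Lemma~\ref{lem:ratioproperties} gives $R(l_2,d)/R(l_2',d)\le R(l_2,3)/R(l_2',3)\le 1$. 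The two's created in successive steps merge into a single growing power of two, which is harmless since disjointness is required only of the block being removed, not of its replacement.

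Two features make the bookkeeping delicate, and I expect these to be the main obstacle. First, parity: the blocks of odd total degree, namely $3^{1},3^{3},5^{1}$, each lose one unit of total degree under the substitutions above, and the number of them present (at most one with $r_i=3$ and at most one with $r_i=5$) has the same parity as $v(l_1)$. When two are present I would merge them first, via $3^{1}+5^{1}\to 2^{4}$ or $3^{3}+5^{1}\to 2^{7}$, each total‑degree preserving and disjoint from the rest, so that the accumulated power of two is exactly $2^{v(l_1)/2}$ when $v(l_1)$ is even and $2^{(v(l_1)-1)/2}$ when $v(l_1)$ is odd. Second, $4^{1}$ is the unique admissible block with $R(4^{1},3)=40>R(2^{2},3)$, so it cannot be turned into $2^{2}$ without lowering the ratio; I keep the at most one $4^{1}$ aside and, at the end, compare $4^{1}+2^{A}$ with $2^{A+2}$ directly, where a short computation gives
\begin{equation*}
\frac{R(4^{1}+2^{A},d)}{R(2^{A+2},d)}=\frac{36}{5\,d\,(A+1)(A+2)},
\end{equation*}
which is $<1$ exactly when $A\ge 1$ (that is, $v(l_1)\ge 6$) or $d\ge 4$. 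Hence the only case in which $4^{1}$ wins is $(v(l_1),d)=(4,3)$, where the admissible sequences are just $2^{2}$ and $4^{1}$ and the maximum is $R(4^{1},3)$, giving the exceptional $l_1'=4^{1}$.

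Assembling these steps yields $R(l_1,d)\le R(l_1',d)$ for every $d\ge 3$, with $l_1'$ exactly as stated. Finally, for $v(l_1)\ge 28$ we have $\lfloor v(l_1)/2\rfloor\ge 14$, so $R(l_1,d)\le R(l_1',d)\le R(l_1',3)=R(2^{a},3)$ with $a\ge 14$; since $R(2^{13},3)>1$ while $R(2^{k+1},3)/R(2^{k},3)=10/(2k+1)<1$ for $k\ge 5$ forces $R(2^{14},3)<1$ and monotone decay thereafter, we conclude $R(l_1,d)<1$. The crux of the whole argument is the bookkeeping forced by the exceptional block $4^{1}$ together with the parity loss from $3^{1},3^{3},5^{1}$; everything else reduces to the finite verification of block comparisons against the recorded ratios.
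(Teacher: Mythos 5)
Your proposal is correct and follows essentially the same route as the paper's proof: restrict the blocks to $r_i\in\{2,3,4,5,6,8\}$ with the same multiplicity caps, replace each non-$2$ block (and the paired odd blocks $5^13^1$, $5^13^3$) by powers of two via Lemma \ref{lem:ratioproperties2}(2) using the recorded $d=3$ comparisons, and isolate $4^1$ via the identical ratio $\frac{36}{5d(A+1)(A+2)}=\frac{25920}{3600\,d\,(A+1)(A+2)}$ to pin down the $(v(l_1),d)=(4,3)$ exception. The only cosmetic difference is that you invoke Lemma \ref{lem:ratioproperties} directly (via $v-s$ monotonicity) for persistence in $d$, where the paper cites Lemma \ref{lem:ratioproperties2}(3), and you spell out the parity bookkeeping that the paper leaves implicit.
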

	\begin{proof}
		By Lemma \ref{lem:addr0}, we have $r_i\in \{2,3,4,5,6,8\}$ for all $i$. Then we may assume $l_1$ is of exponential type $8^{a_8}6^{a_6}5^{a_5}4^{a_4}3^{a_3}2^{a_2}$ with $a_i\ge 0$. Furthermore, as $R(2^{14},3)$, $R(3^4,3)$, $R(4^4,3)$, $R(5^2,3)$, $R(6^3,3)$, and $R(8^2,3)$ are all less than 1, applying Lemmas \ref{lem:ratioproperties} and \ref{lem:addr0}, we have $a_8\le 1$, $a_6\le 2$, $a_5\le 1$, $a_4\le 3$, $a_3\le 3$, $a_2\le 13$. By calculations, we observe $R(5^1 3^1,3)<R(2^4,3)$, $R(5^1 3^3,3)<R(2^7,3)$, $R(3^1,3)<R(2^1,3)$, $R(3^2,3)<R(2^3,3)$, $R(3^3,3)<R(2^4,3)$, $R(4^2,3)<R(2^4,3)$, $R(4^3,3)<R(2^6,3)$, $R(5^1,3)<R(2^2,3)$, $R(6^1,3)<R(2^3,3)$, $R(6^2,3)<R(2^6,3)$ and $R(8^1,3)<R(2^4,3)$.
		
		If $a_4\neq 1$, then by Lemma \ref{lem:ratioproperties2} (2)-(3) and replacing subdegrees $3,4,5,6,8$ by $2$ with suitable multiplicities (e.g. replacing $5^13^1$ by $2^4$), we have $R(l_1,d)\leq R(2^{a_2^\prime},d)$ with $a_2^\prime=\frac{v(l_1)}{2}$ $\big (\text{resp. } a_2^\prime=\frac{v(l_1)-1}{2}\big )$ when $v(l_1)$ is even (resp. odd).
		
		If $a_4=1$, then similar to the previous case, we have $R(l_1,d)\leq R(4^1 2^{b_2},d)$ with $b_2=\frac{v(l_1)-4}{2}$ $\big (\text{resp. } b_2=\frac{v(l_1)-5}{2}\big )$ when $v(l_1)$ is even (resp. odd). Since the ratio $R(4^1 2^{b_2},d)/R(2^{b_2+2},d)$ $=\frac{25920}{3600 d(b_2+1)(b_2+2)}$, we find that $R(4^1 2^{b_2},d)\le R(2^{b_2+2},d)$ unless $(b_2,d)=(0,3)$. Thus, we have $R(l_1,d)\le R(2^{b_2+2},d)$ unless $(v(l_1),d)=(4,3)$.
		
		Moreover, since $R(2^{14},d)<1$ for $d=3$, by Lemmas \ref{lem:ratioproperties} and \ref{lem:addr0}, we have $R(l_1,d)<1$ for all $l_1$ satisfying $v(l_1)\geq28$.
	\end{proof}
	
	\begin{remark}\label{rem:Rvalues}
		For later use, here we present (in decreasing order) the values $R(l_1^\prime,3)$ for all possible $l_1^\prime$ with $v(l_1^\prime)\le 26$ in Lemma \ref{lem:boundl1}: $R(2^5,3)=20000/189$, $R(2^6,3)=200000/2079$, $R(2^4,3)=2000/21$, $R(2^7,3)=2000000/27027$, $R(2^3,3)=200/3$, $R(2^8,3)=4000000/81081$, $R(4^1,3)=40$, $R(2^2,3)=100/3$, $R(2^9,3)=40000000/1378377$, $R(2^{10},3)=400000000/26189163$, $R(2^1,3)=10$, $R(2^{11},3)=4000000000/549972423$, $R(2^{12},3)=40000000000/12649365729$, $R(2^{13},3)=16000000000/12649365729$.
	\end{remark}
	
    \subsection{Proof of most cases}\label{ss:4.2}
		Next we use Fermat-test ratios to prove that our main result Theorem \ref{thm:Main} holds for all possible pairs $(n,d)$ except only finitely many cases.
	
	\begin{theorem}\label{thm:boundnd}
		Let  $F=F(x_1,\dots,x_{n+2})$ be a smooth form of degree $d$, where $n\geq1$, $d\geq3$. Suppose that $X_F$ is not isomorphic to the Fermat hypersurface in $\P^{n+1}$ of degree $d$.
		If $n\geq 26$ or $d\geq18$, then $|\Aut(X_F)|< d^{n+1}\cdot(n+2)!$.
	\end{theorem}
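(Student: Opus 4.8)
The plan is to pass everything through the Fermat-test ratio and prove $R(F)<1$. By the exact sequence \eqref{eq:keyexact} we have $|\Aut(X_F)|=|\Aut(F)|/d$, so the claim $|\Aut(X_F)|<d^{n+1}(n+2)!$ is equivalent to $|\Aut(F)|<d^{n+2}(n+2)!=d^{r}\cdot r!$ with $r=n+2$. First I would dispose of the case where $\Aut(F)$ consists of semi-permutations (subdegree sequence $1^{n+2}$): here Theorem \ref{thm:mono} gives $|\Aut(F)|\le d^{r}r!$ with equality only for the Fermat form, so the bound is strict since $X_F$ is not Fermat. In every other case the subdegree sequence $l:=l(\Aut(F))$, of total degree $v(l)=n+2$, contains a subdegree $>1$, and Lemma \ref{lem:FTRandGF} gives $|\Aut(F)|\le R(F)\cdot d^{r}r!$, so it suffices to prove $R(F)<1$. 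Writing $l=l_1+1^{c}$, where $l_1$ collects the subdegrees $>1$ (so $v(l_1)\ge 2$) and $c=n+2-v(l_1)$, the disjointness of the subdegrees of $l_1$ and $1^{c}$ together with $R(1^{c},d)=1$ turns Lemma \ref{lem:ratioproperties2}(1) into the clean identity
\[
R(F)=\frac{R(l_1,d)}{\binom{n+2}{v(l_1)}},
\]
so the whole problem reduces to bounding $R(l_1,d)$ against the binomial coefficient $\binom{n+2}{v(l_1)}$.

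For $d\ge 18$ I would use monotonicity in $d$ (Lemma \ref{lem:ratioproperties}) to reduce to $d=18$, since $R(l_1,d)\le R(l_1,18)$. At $d=18$ a direct check shows that the only exponential type $r_0^{k_0}$ with $r_0>1$ whose Fermat-test ratio is $\ge 1$ is $2^{1}$, with $R(2^{1},18)=5/3$; every other such power is ``bad''. Stripping the bad powers off $l_1$ one at a time via Lemma \ref{lem:ratioproperties2}(1) only decreases the ratio (the removed factor is $<1$ while the binomial is $\ge 1$), so $R(l_1,18)\le R(2^{1},18)=5/3$. Since $\binom{n+2}{v(l_1)}\ge n+2\ge 3$ whenever $c\ge 1$, this already gives $R(F)<1$; in the remaining subcase $c=0$ (so $v(l_1)=n+2\ge 3$) a lone $2^{1}$ must be accompanied by a nonempty bad part, and the extra factor $\binom{n+2}{2}\ge 3$ produced by removing it again forces $R(F)<1$.

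For $n\ge 26$ I would instead reduce to $d=3$ and exploit the rapid growth of $\binom{n+2}{v(l_1)}$ in $n$. After discarding bad powers as above, Lemma \ref{lem:boundl1} reduces $l_1$ to exponential type $2^{a}$ or $4^{1}$, so Remark \ref{rem:Rvalues} bounds $R(l_1,3)$: it is $<106$ in general, $<28$ when $v(l_1)=27$, and $<1$ once $v(l_1)\ge 28$. These three ranges match the binomial: for $2\le v(l_1)\le 26$ one has $\binom{n+2}{v(l_1)}\ge\binom{n+2}{2}\ge 378>106$; for $v(l_1)=27$ one has $\binom{n+2}{27}\ge 28$; and for $v(l_1)\ge 28$ one uses $R(l_1,d)<1$ together with $\binom{n+2}{v(l_1)}\ge 1$. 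In every case $R(F)<1$, the extremal configuration being $l=2^{1}1^{n}$ with $R=20/[(n+2)(n+1)]$.

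The main obstacle is the bound $R(l_1,d)<1$ for $v(l_1)\ge 28$ (all subdegrees $\ge 2$), needed exactly when $c$ is small and the binomial gives no help. Lemma \ref{lem:boundl1} supplies this only for sequences all of whose powers are good, so the delicate point is controlling the bad powers. Splitting $l_1=l_{\mathrm{good}}+l_{\mathrm{bad}}$, I would argue by a dichotomy: either $v(l_{\mathrm{good}})\ge 28$, where Lemma \ref{lem:boundl1} applies directly after noting $R(l_1,3)\le R(l_{\mathrm{good}},3)<1$; or $v(l_{\mathrm{good}})\le 27$, where $R(l_{\mathrm{good}},3)<106$ and $R(l_{\mathrm{bad}},3)<1$, while removing the bad part introduces a denominator $\binom{v(l_1)}{v(l_{\mathrm{bad}})}\ge\binom{28}{2}=378$ that dominates. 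Making this dichotomy airtight uniformly across all $v(l_1)\in\{27,28,\dots,n+2\}$ (and likewise verifying the sharper ``$<28$'' bound at $v(l_1)=27$) is the step requiring the most care.
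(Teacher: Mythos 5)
Your proposal is correct and follows essentially the same route as the paper's proof: reduce to showing $R(F)<1$ via Theorem \ref{thm:mono} and Lemma \ref{lem:FTRandGF}, split the subdegree sequence with Lemma \ref{lem:ratioproperties2}(1), control the surviving powers via Lemmas \ref{lem:addr0}, \ref{lem:boundl1} and Remark \ref{rem:Rvalues}, and dominate the result by binomial coefficients, with the same key numerics (only $2^1$ survives at $d=18$; the bound $106$ against $\binom{28}{v(l_1)}$ at $d=3$). The only real difference is bookkeeping: the paper defines $l_1$ as the sum of just the ``good'' powers $r_j^{k_j}$ with $R(r_j^{k_j},d)\ge 1$, putting the bad powers into $l_2$ together with the $1$'s so that $R(l_2,d)\le 1$ holds automatically; with that grouping Lemma \ref{lem:boundl1} applies to $l_1$ directly, and the good/bad dichotomy you flag as delicate in your final paragraph (and which your argument does handle correctly) never arises.
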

	\begin{proof}
		Suppose the subdegree sequence $l:=l(\Aut(F))$ of $\Aut(F)$ is of exponential type $r_1^{k_1}\cdots r_m^{k_m}$ with $m\ge 1$ and $k_i>0$ for all $i$. By Theorem \ref{thm:mono}, we may assume $r_1>1$. Then by Lemma \ref{lem:FTRandGF}, it suffices to show $R(l,d)<1$. Let $\mathcal{A}:=\{i| \; r_i>1, R(r_i^{k_i},d)\ge 1, 1\le i\le m\}.$ We define $l_1$ to be the sum of all the subdegree sequences of exponential types $r_j^{k_j}$ ($j\in \mathcal{A}$). Let $l_2:=l-l_1$.
		
		Case $n\ge 26$. If $l=l_1$ or $l=l_2$, then by Lemmas \ref{lem:boundl1} and \ref{lem:ratioproperties2} (1), $R(l,d)<1$. Then we may assume that $l\neq l_1,l_2$ and $v(l_1)<28$. Since $n+2=v(l_1+l_2)\ge 28$, by Lemmas  \ref{lem:ratioproperties2} (1), \ref{lem:boundl1} and Remark \ref{rem:Rvalues}, we have
		$$R(l_1+ l_2,d)=\frac{R(l_1,d)\cdot R(l_2,d)}{\tbinom{v({l_1+ l_2})}{v({l_1})}}\leq \frac{R(l_1',d)}{\tbinom{v({l_1+ l_2})}{v({l_1})}}\leq \frac{R(l_1',d)}{\tbinom{28}{v({l_1})}}\leq \frac{R(l_1',3)}{\tbinom{28}{v({l_1})}}<1.$$
		
		Case $d\ge 18$. By Lemma \ref{lem:ratioproperties}, it suffices to show $R(l,18)<1$. For positive integers $r_0>1$ and $k_0>0$,  from Lemma \ref{lem:addr0}, we conclude that $R(r_0^{k_0},18)\ge 1$ if and only if $(r_0,k_0)=(2,1)$. Thus, by Lemma \ref{lem:boundl1}, either $l_1=l_1^\prime$ of exponential type $2^1$ or $l=l_2$, which implies $R(l,18)<1$ by Lemma \ref{lem:ratioproperties2} (1) again (in fact, if $l=l'=2^1$, then $R(l,18)=\frac{R(2^1,18)\cdot R(l_2,18)}{\tbinom{n+2}{2}}\leq \frac{R(2^1,18)}{\tbinom{n+2}{2}}<1$; if $l=l_2$, then $R(l,18)\leq\prod_{1\leq i \leq m} R(r_i^{k_i},18)<1$ since $R(r_i^{k_i},18)<1$ for all $r_i>1$). This completes the proof of the theorem.
	\end{proof}
	
	By similar arguments in the proof of Theorem \ref{thm:boundnd}  and running some local computations, we obtain additional properties of Fermat-test ratios, which will be used in the subsequent sections. 
	
	\begin{lemma}\label{lem:2eleinl}
		Let $a\ge 0$ be the multiplicity of $1$ in a subdegree sequence $l$ and $d\ge 3$. Then 
		
		\begin{enumerate}
			\item [(1)] $R(l,d)<106$;
			\item [(2)] If $l$ has at least two distinct subdegrees, then $R(l,d)<60$;
			\item [(3)] If $a\ge 2$ (resp. $a\ge 1$), then $R(l,d)<3$ (resp. $R(l,d)<11$).
		\end{enumerate} 
	\end{lemma}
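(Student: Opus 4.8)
The plan is to reduce the three estimates to the value $d=3$ and to a short list of extremal subdegree sequences whose ratios are recorded in Remark~\ref{rem:Rvalues}. First, by Lemma~\ref{lem:ratioproperties} we have $R(l,d)=R(l,d')(d'/d)^{v(l)-s}$ with $v(l)\ge s$, so $R(l,d)$ is non-increasing in $d$ and it suffices to bound $R(l,3)$. Next I would peel off the subdegree $1$: writing $l=1^{a}+l_{>1}$ with $l_{>1}$ the subsequence of subdegrees $>1$, the two parts have disjoint subdegrees, so the equality case of Lemma~\ref{lem:ratioproperties2}(1) together with $R(1^{a},3)=1$ gives $R(l,3)=R(l_{>1},3)/\binom{r}{a}$ with $r=v(l)$. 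Finally I split $l_{>1}=l_{\mathrm{good}}+l_{\mathrm{bad}}$ into its good blocks $r_i^{k_i}$ (those with $R(r_i^{k_i},3)\ge1$) and its bad blocks; by Lemma~\ref{lem:ratioproperties2}(1) again $R(l_{>1},3)=R(l_{\mathrm{good}},3)R(l_{\mathrm{bad}},3)/\binom{v(l_{>1})}{v(l_{\mathrm{good}})}\le R(l_{\mathrm{good}},3)$, since the bad blocks contribute a factor $\le1$, and Lemma~\ref{lem:boundl1} collapses $l_{\mathrm{good}}$ to a single $2^{j}$ or $4^{1}$, whose value is at most $R(2^{5},3)=20000/189$ by Remark~\ref{rem:Rvalues}. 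In particular $R(l_{>1},3)\le 20000/189$, which already proves (1): $R(l,d)\le R(l_{>1},3)\le 20000/189<106$.

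For part~(3) I would use the denominator $\binom{r}{a}$ created by the subdegree $1$. Put $v:=v(l_{>1})=r-a$ and let $M(v)$ be the largest good value of total degree $\le v$ (read off Remark~\ref{rem:Rvalues} via Lemma~\ref{lem:boundl1}), so that $R(l_{>1},3)\le M(v)$ and $R(l,3)\le M(v)/\binom{v+a}{a}$. Since $M(v)\le 20000/189$ while $\binom{v+a}{a}$ grows without bound in $v$, maximizing the right-hand side over $v$ is a finite computation; it is attained, for $a=1$, at $v=8$, giving $R(2^{4},3)/\binom{9}{1}=2000/189<11$, and, for $a\ge2$, at $a=2,\ v=4$, giving $R(4^{1},3)/\binom{6}{2}=40/15=8/3<3$.

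For part~(2), the case $a\ge1$ follows from part~(3) because $11<60$, so I may assume $a=0$: all subdegrees exceed $1$ and at least two are distinct. If $l_{>1}$ has a bad block then its total degree is $\ge7$ (every block of total degree $<7$ is good, by Lemma~\ref{lem:addr0} and the recorded values), and if a good block is also present then $v(l_{\mathrm{good}})\ge2$ and $v(l_{\mathrm{bad}})\ge7$ give $\binom{v(l)}{v(l_{\mathrm{good}})}\ge\binom{9}{2}=36$, so $R(l,3)<(20000/189)/36<3$, while if every block is bad then $R(l,3)<1$. If instead every block of $l_{>1}$ is good, then each good block has bounded multiplicity (Lemma~\ref{lem:boundl1}), so the all-good sequences form a finite set, and a direct check shows the maximum over those with two distinct subdegrees is $R(2^{1}4^{1},3)=80/3<60$.

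The hard part throughout is the opposing monotonicity: $R(l_{\mathrm{good}},3)$ climbs to its peak $R(2^{5},3)$ precisely as the binomial penalties---from the subdegree $1$ in part~(3), and from a second distinct subdegree in part~(2)---pull the ratio the other way. Consequently no single crude inequality works: in particular the naive product bound $R(l,3)\le R(l_{\mathrm{good}},3)R(l_{\mathrm{bad}},3)$ breaks down for configurations carrying a bad block of large total degree, where one must instead retain the binomial factor. Pinning down the finitely many candidate extremal sequences and checking them against the recorded values of $R$ is where the genuine work lies.
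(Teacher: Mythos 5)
Your proposal is correct and takes essentially the same route as the paper: the paper's own (very terse) proof reduces to $d=3$ by Lemma \ref{lem:ratioproperties} and then cites ``similar arguments to the proof of Theorem \ref{thm:boundnd}'' --- i.e., the same splitting off of the $1^a$ part and of the good/bad blocks via the equality case of Lemma \ref{lem:ratioproperties2}(1), with the good part collapsed by Lemma \ref{lem:boundl1} and Remark \ref{rem:Rvalues} --- followed by unrecorded ``local computations.'' Your write-up simply supplies those finite computations explicitly, and the key numbers check out: $R(2^5,3)=20000/189<106$ for (1); the extremal values $2000/189<11$ (at $a=1$, $v=8$) and $8/3<3$ (at $a=2$, $v=4$) for (3); and for (2), every bad block indeed has total degree at least $7$ (so the factor $\tbinom{9}{2}=36$ handles the mixed case), while the all-good maximum $R(2^1 4^1,3)=80/3<60$.
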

	To get the bounds in (1), (2), (3), it suffices to consider the case $d=3$ by Lemma \ref{lem:ratioproperties}. Note that 106 is a uniform bound for Fermat-test ratios.
	\begin{remark}\label{rem:136}
		Let $d\ge 3$, $a\ge 2$. Let $l$ be a subdegree sequence containing 1 and $a$ with $R:=R(l,d)\ge 1$. Then $a\in\{2,3,4,6\}$. Moreover, if $a=3$ (resp. $6$), then $(l,d,R)=(3^1 2^1 1^1, 3,10/9)$, $(3^1 1^1, 3,5/3)$ (resp. $(6^1 1^1, 4, 81/64)$, $(6^1 1^2, 3,4/3)$, $(6^1 2^1 1^1, 3,40/27)$, $(6^1 1^1, 3,16/3)$). 
	\end{remark}
	
	\section{Refined bounds via special monomials}\label{ss:refinedbounds}
	In this section, based on canonical bounds, we derive several refined upper bounds for the size of automorphism groups of smooth forms containing monomials involving different primitive constituents (Lemmas \ref{lem:type2bound}, \ref{lem:classify}, \ref{lem:d1d2}, \ref{lem:boundtypeII}). The motivation for such bounds is to control the size of the automorphism group $\Aut(F)$ of a smooth form $F$ when $\Aut(F)$ is not primitive and they will be frequently used in Section \ref{ss:proof}.
	
	Smoothness of a polynomial $F$ sometimes implies existence of monomials of special shape.
	\begin{lemma}\label{lem:smtosm}
		Let $F=F(x_1,\ldots,x_{k+a})$ be a smooth form of degree $d\ge 3$ with $k\ge 2$ and $a>0$. We write $F=F_1+F_2$, where  $F_1\in \C[x_1,\ldots,x_k]$ and $F_2\in (x_{k+1},\ldots,x_{k+a})\cdot \C[x_1,\ldots,x_{k+a}]$ are forms of degree $d$. Suppose $F_1$ is not smooth. Then $F_2\notin (x_{k+1},\ldots,x_{k+a})^2\cdot \C[x_1,\ldots,x_{k+a}]$. In particular, there exist integers $d_1,\ldots,d_k\ge 0$ and a monomial $\mathfrak{m}\in F$ with $\mathfrak{m}=x_1^{d_1}\cdots x_k^{d_k}\cdot x_{k+j}$ for some $1\le j\le a$.
	\end{lemma}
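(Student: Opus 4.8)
The plan is to argue by contradiction using the Jacobian criterion for smoothness recalled in (2.4). Suppose, contrary to the claim, that $F_2\in (x_{k+1},\ldots,x_{k+a})^2\cdot\C[x_1,\ldots,x_{k+a}]$, so that every monomial appearing in $F_2$ has total degree at least $2$ in the ``new'' variables $x_{k+1},\ldots,x_{k+a}$. Since $F_1$ is not smooth as a form in $x_1,\ldots,x_k$, there is a nonzero vector $(a_1,\ldots,a_k)\in\C^k$ at which all partials $\partial F_1/\partial x_i$ ($1\le i\le k$) vanish simultaneously. I would then form the point $P=(a_1,\ldots,a_k,0,\ldots,0)\in\C^{k+a}$, which is nonzero because $(a_1,\ldots,a_k)\neq 0$, and aim to show that $P$ is a common zero of all partials of $F$, contradicting the smoothness of $F$.

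The heart of the matter is a short bookkeeping of how differentiation affects the degree in the new variables. For an old variable $x_i$ with $1\le i\le k$ I would write $\partial F/\partial x_i=\partial F_1/\partial x_i+\partial F_2/\partial x_i$: the first term equals $\partial F_1/\partial x_i(a_1,\ldots,a_k)=0$ since $F_1$ does not involve the new variables, while differentiating $F_2$ by an old variable leaves the new-variable degree of each monomial unchanged, so every monomial of $\partial F_2/\partial x_i$ still carries new-variable degree at least $2$ and hence vanishes at $P$, where all new coordinates are $0$. For a new variable $x_i$ with $k+1\le i\le k+a$, the summand $F_1$ contributes nothing, and differentiating $F_2$ by $x_i$ lowers the new-variable degree of each surviving monomial by exactly one, leaving it at least $1$; such a monomial again vanishes at $P$. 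Thus every partial of $F$ vanishes at the nonzero point $P$, the desired contradiction, which establishes $F_2\notin (x_{k+1},\ldots,x_{k+a})^2\cdot\C[x_1,\ldots,x_{k+a}]$.

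For the ``in particular'' clause I would note that by construction $F_2$ lies in the ideal $(x_{k+1},\ldots,x_{k+a})$, so each of its monomials has new-variable degree at least $1$; since $F_2$ is not in the square of this ideal, at least one monomial must have new-variable degree exactly $1$, and such a monomial is necessarily of the form $x_1^{d_1}\cdots x_k^{d_k}\cdot x_{k+j}$ for some $1\le j\le a$ and nonnegative integers $d_1,\ldots,d_k$ with $d_1+\cdots+d_k=d-1$. I do not expect a genuine obstacle here: the argument is elementary once the contradiction is set up, and the only point requiring care is precisely the distinction, in the degree count, between differentiating by an old versus a new variable — one must verify that any single differentiation keeps the new-variable degree at least $1$ (rather than possibly dropping it to $0$), which is exactly what forces vanishing at $P$.
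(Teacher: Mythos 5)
Your proof is correct and follows essentially the same route as the paper: lift a singular point of $F_1$ to a point of $\C^{k+a}$ with zero new coordinates, and check via the degree count in the new variables that all partials of $F$ vanish there, contradicting smoothness of $F$. The paper merely normalizes the singular point to $(1,0,\ldots,0)$ and leaves the differentiation bookkeeping as a ``direct computation,'' which you have written out in full.
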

	\begin{proof}
		Since $F_1$ is not smooth, we may assume all partial derivatives of $F_1$ vanishes at $(1,0,\dots,0)$ $\in\C^k$.  Then by direct computation, if $F_2\in (x_{k+1},\ldots,x_{k+a})^2\cdot \C[x_1,\ldots,x_{k+a}]$, all partial derivatives of $F$ vanishes at $(1,0,\ldots,0)\in\C^{k+a}$, a contradiction. Therefore, $F_2\notin (x_{k+1},\ldots,x_{k+a})^2\cdot \C[x_1,\ldots,x_{k+a}]$, which implies the last statement in the lemma.
	\end{proof}
	
	The following lemma will be used in the proof of Lemmas \ref{lem:type2bound} and \ref{lem:classify}.
	
	\begin{lemma}\label{lem:n2inva}
		Let $F=F(x_1,\dots,x_{n+2})$ be a form of degree $d\geq3$ with $n\ge 1$ and $G$ be a finite subgroup of $\Aut(F)$. Suppose the underlying basis of $\C^{n+2}$ is compatible with a decomposition $\C^{n+2}=V_1\oplus V_2$, where $V_i$ are $G$-stable subspaces of dimension $n_i\ge 1$ ($i=1, 2$). If the $G$-space $V_2$ is irreducible and the $S^{d-1}(V_1^*)\otimes S^1(V_2^*)$-component $F^{(d-1,1)}$ of $F$ is not zero, then we have
		$$F^{(d-1,1)}=F'_1x_{n_1+1}+\cdots+F'_{n_2}x_{n_1+n_2},$$
		where $F_{i}'=F_{i}'(x_1,\dots,x_{n_1})$ are $n_2$ $\C$-linearly independent forms of degree $d-1$.
	\end{lemma}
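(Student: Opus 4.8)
The plan is to separate the statement into its trivial part and its real content, and then prove the content by a short representation-theoretic argument resting on the irreducibility of $V_2$. Since the underlying basis of $\C^{n+2}$ is compatible with $\C^{n+2}=V_1\oplus V_2$, the coordinates $x_1,\dots,x_{n_1}$ span $V_1^*$ and $x_{n_1+1},\dots,x_{n_1+n_2}$ span $V_2^*$; hence by the very definition of the $S^{d-1}(V_1^*)\otimes S^1(V_2^*)$-component, any degree-$d$ form has $F^{(d-1,1)}=\sum_{i=1}^{n_2}F_i'(x_1,\dots,x_{n_1})\,x_{n_1+i}$ with each $F_i'$ a form of degree $d-1$. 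So the asserted shape is automatic, and only the $\C$-linear independence of $F_1',\dots,F_{n_2}'$ requires proof. First I would record that, because $V_1$ and $V_2$ are $G$-stable, the $G$-action on $S^d\bigl((\C^{n+2})^*\bigr)$ respects the bigrading $\Oplus_{k_1+k_2=d}S^{k_1}(V_1^*)\otimes S^{k_2}(V_2^*)$ from (2.3); since $F$ is $G$-invariant and these summands are distinct $G$-stable subspaces, each component of $F$ is itself $G$-invariant, so in particular $F^{(d-1,1)}\in S^{d-1}(V_1^*)\otimes V_2^*$ (recall $S^1(V_2^*)=V_2^*$) is fixed by $G$.

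Next I would reinterpret this invariant tensor as a homomorphism. Under the canonical $G$-equivariant isomorphism $V_2^*\otimes S^{d-1}(V_1^*)\cong \Hom_\C\bigl(V_2,S^{d-1}(V_1^*)\bigr)$, which sends $\xi\otimes m$ to the map $v\mapsto \xi(v)\,m$, the element $F^{(d-1,1)}$ corresponds to a linear map $\Phi\colon V_2\to S^{d-1}(V_1^*)$ satisfying $\Phi(e_{n_1+i})=F_i'$, where $(e_{n_1+i})$ is the basis of $V_2$ dual to $(x_{n_1+i})$. The key point is that $G$-invariance of the tensor $F^{(d-1,1)}$ is exactly the statement that $\Phi$ is a $G$-equivariant map, and therefore $\ker\Phi$ is a $G$-submodule of $V_2$.

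Finally, I would invoke irreducibility. Since $F^{(d-1,1)}\neq 0$ by hypothesis, $\Phi\neq 0$, so $\ker\Phi\subsetneq V_2$; as $V_2$ is an irreducible $G$-module, this forces $\ker\Phi=0$, i.e. $\Phi$ is injective. Injectivity means precisely that the images $\Phi(e_{n_1+i})=F_i'$ of a basis of $V_2$ are $\C$-linearly independent, which is the desired conclusion (and in particular shows $n_2\le \dim S^{d-1}(V_1^*)$). The only genuinely delicate point is the bookkeeping of the dual-action conventions of Section~\ref{ss:notation}, so that the correspondence ``$G$-invariant tensor $\leftrightarrow$ $G$-equivariant map'' is set up with the correct variance and so that $V_2^*$ inherits irreducibility from $V_2$; once the equivariance of $\Phi$ is in place, the irreducibility of $V_2$ does all the remaining work, and no smoothness hypothesis and no assumption beyond $d\ge 3$ (needed only to make the $F_i'$ honest forms) enters the argument.
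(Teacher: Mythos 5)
Your proof is correct and is essentially the paper's argument in dual form: where the paper takes the minimal subspace $W_2'\subseteq V_2^*$ with $F^{(d-1,1)}\in S^{d-1}(V_1^*)\otimes S^1(W_2')$, shows it is nonzero and $G$-stable, and concludes $W_2'=V_2^*$ from irreducibility of $V_2^*$, you take the kernel of the associated $G$-equivariant map $\Phi\colon V_2\to S^{d-1}(V_1^*)$ and conclude it vanishes from irreducibility of $V_2$. Both arguments rest on exactly the same two facts ($G$-invariance of the bigraded component and irreducibility of $V_2$), so this is the same approach up to dualization.
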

	\begin{proof}
		Recall that $F^{(d-1,1)}\in S^{d-1}(V_1^*)\otimes S^{1}(V_2^*)$.  Let $W_2'$ be the minimal subvector space of $V_2^*$ such that $F^{(d-1,1)}\in S^{d-1}(V_1^*)\otimes S^1(W_2')$. Since $ F^{(d-1,1)}\neq 0$, we have $W_2'\neq 0$. Since $G$ preserves $F$ and $V_i$ $(i=1,2)$ are $G$-stable, we have $G$ preserves $F^{(d-1,1)}$, which implies $W_2'$ is $G$-stable. From the irreducibility of $V_2$, we get that $V_2^*$ is also irreducible under the (right) action by $G$. Then $W_2'=V_2^*$, which implies a decomposition of $F^{(d-1,1)}$ as in the lemma.
	\end{proof}
	
	For our purposes and conventions, we recall some notations in $\bf{Set\textrm{-}up}$ \ref{setup}.
	
	\begin{setup}\label{setupGF}
		Let $F=F(x_1,\dots,x_{r})$ be a smooth form of degree $d\geq3$ with $r\ge 3$. Let $G$ be a subgroup of $\Aut(F)$. Let $V_i,W_{ij},K_i, k_i,r_{ij},H_{ij},m, s, P, \widetilde{H}_{ij}, \phi_{ij}, \pi_{ij}, \phi, N$ be as in $\bf{Set\textrm{-}up}$ \ref{setup} and Definition \ref{def:associ}. Let $\tilde{\bar{H}}_{ij}:=\phi_{ij}(P)$ and $\bar{H}_{ij}:=\pi_{ij}(\tilde{\bar{H}}_{ij})$. We assume that the underlying basis of $\C^{r}$ is compatible with the decomposition $\C^{r}=\oplus_{i,j} W_{ij}$. The decomposition 
		\begin{equation*}
			S^d({\C^r}^*)=\Oplus_{\sum_{i,j}d_{ij}=d}S^{d_{11}}(W_{11}^*)\otimes S^{d_{1 2}}(W_{1 2}^*)\otimes\cdots\otimes S^{d_{m k_m}}(W_{m k_m}^*)
		\end{equation*} 
		naturally induces a decomposition 
		\begin{equation*}\label{eq:decompF}
			F=\sum_{\sum_{i,j}d_{ij}=d} F^{(d_{11},d_{12},\ldots,d_{m k_m})}\in S^d(\C^{r*}).
		\end{equation*}
		We use $F_{ij}$ to denote the component (possibly being zero) in $S^d(W_{ij}^*)$. 
	\end{setup}
	
	Note that $\tilde{\bar{H}}_{ij}$ and $\bar{H}_{ij}$ are normal subgroups of $\widetilde{H}_{ij}$ and $H_{ij}$ respectively, since $\phi_{ij}$ and $\pi_{ij}$ are surjective and $P$ is normal in $G$. Clearly primitive constituents $H_{ij}$ of $G$ preserve $F_{ij}$. 
	In the following lemmas and their proofs, we adopt the notations in $\bf{Set\textrm{-}up}$ \ref{setupGF}.
	From the associated exact sequences \eqref{eq:1associ} and \eqref{eq:2associ}, we have 
		\begin{equation}\label{eq:refinedbound}
			|G|\leq \frac{B(G,F)}{\prod_{i}[S_{k_i}:K_i]}\cdot \frac{|N|}{d^s}\cdot\frac{|P/N|}{\prod_{i,j}|H_{ij}|}.
	\end{equation}
	\begin{lemma}\label{lem:type2bound}
		Suppose $m\ge 2$ and let $V_1':=\oplus_{i\neq 2} V_i$.  If there is a monomial  $\mathfrak{m}\in S^{d-1}(V_1'^*)\otimes S^1(V_2^*)$ in $F$, then
		$$|G|\leq \frac{B(G,F)}{|H_{21}|^{k_2}}\le \frac{R(l(G),d)\cdot d^{r} \cdot r!}{{\rm JC}(r_{21})^{k_2}}.$$
	\end{lemma}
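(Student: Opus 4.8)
The second inequality is routine, so I would record it first and then concentrate on the first. Since each $H_{ij}$ is a finite primitive subgroup of $\PGL(r_{ij},\C)$ we have $|H_{ij}|\le\JC(r_{ij})$, and because the blocks $W_{2j}$ are permuted transitively by $G$ their stabilizers are conjugate, so $|H_{2j}|=|H_{21}|$ and $\prod_{j=1}^{k_2}|H_{2j}|=|H_{21}|^{k_2}$. Thus $B(G,F)/|H_{21}|^{k_2}=d^{s}\prod_{i\ne2,j}|H_{ij}|\cdot\prod_i k_i!$, and replacing every surviving $|H_{ij}|$ by $\JC(r_{ij})$ and invoking Definition \ref{def:FTR} produces exactly the right-hand bound $R(l(G),d)\,d^{r}r!/\JC(r_{21})^{k_2}$. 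For the first inequality I would start from \eqref{eq:refinedbound}. Since $\prod_i[S_{k_i}:K_i]\ge1$ and $|N|\le d^{s}$ by Theorem \ref{thm:diag}, it suffices to prove that the image of $P$ in the $V_2$-factors $\prod_j H_{2j}$ is trivial. Writing $\phi^{(1)}\colon P\to\prod_{i\ne2,j}H_{ij}$ for the projection to the remaining factors and setting $N_2:=\ker\phi^{(1)}$, this is equivalent to $N_2=N$: that equality forces $|P/N|=|P/N_2|\le\prod_{i\ne2,j}|H_{ij}|=\prod_{i,j}|H_{ij}|/|H_{21}|^{k_2}$, whence \eqref{eq:refinedbound} gives $|G|\le B(G,F)/|H_{21}|^{k_2}$.

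To bring in the monomial I would apply Lemma \ref{lem:n2inva} to the $G$-stable splitting $\C^{r}=V_1'\oplus V_2$, which is legitimate because $V_2$ is irreducible and the hypothesis $\mathfrak m\in F$ says precisely that $F^{(d-1,1)}\ne0$. This yields $F^{(d-1,1)}=F_1'y_1+\cdots+F_{n_2}'y_{n_2}$, where $n_2=\dim V_2$, the $y_i$ are the coordinates on $V_2$, and $F_1',\dots,F_{n_2}'$ are $\C$-linearly independent forms of degree $d-1$ in the $V_1'$-variables. Their span $U\subseteq S^{d-1}(V_1'^*)$ is then an $n_2$-dimensional $G$-stable subspace, and since $F^{(d-1,1)}$ is $G$-invariant the induced injection $V_2\hookrightarrow U$ is an isomorphism of $G$-representations.

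The heart of the argument is then to show $N_2=N$, and I would do this in two moves. First, $N_2$ is abelian: every $g\in N_2$ is scalar on each $W_{ij}$ with $i\ne2$, so $g$ acts on $S^{d-1}(V_1'^*)$ diagonally in the monomial basis; hence the operators $g|_U$ for $g\in N_2$ pairwise commute, and transporting along $U\cong V_2$ shows that $N_2$ acts on $V_2$ by commuting operators, so (its action on $V_1'$ being by scalars) $N_2$ is abelian. Second, $N_2\trianglelefteq G$: for $\gamma\in G$ and $g\in N_2$ the conjugate $\gamma g\gamma^{-1}$ again lies in $P$ and is scalar on each $W_{ij}$ with $i\ne2$, because $\gamma$ permutes the $W_{ij}$ only within each fixed $V_i$ and conjugation carries a scalar block to a scalar block. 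Consequently, for each $j$ the image $\phi_{2j}(N_2)\subseteq\widetilde H_{2j}$ is an abelian normal subgroup of the primitive linear group $\widetilde H_{2j}$, hence lies in $Z(\widetilde H_{2j})$, which consists of scalar matrices. Therefore every $g\in N_2$ is scalar on each $W_{2j}$ as well, so $N_2\subseteq N$ and thus $N_2=N$; as explained above this finishes the first inequality.

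The hard part is this middle step: extracting from a single degree-$(d-1,1)$ monomial the conclusion that $N_2$, which is scalar only on the $V_1'$-blocks and a priori acts by arbitrary primitive matrices on $V_2$, in fact acts by scalars on every $W_{2j}$. The delicate point is that it is not enough to know $V_2$ embeds in $S^{d-1}(V_1'^*)$; one must combine the commuting-diagonal structure forced by scalarity on $V_1'$ (which, through $U\cong V_2$, makes $N_2$ abelian) with the structural fact that a normal abelian subgroup of each primitive constituent $\widetilde H_{2j}$ is central and hence scalar. Everything else is bookkeeping with the two associated exact sequences and the inequality \eqref{eq:refinedbound}.
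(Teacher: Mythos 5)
Your proof is correct, and its skeleton coincides with the paper's: both reduce, via \eqref{eq:refinedbound}, to showing that the kernel of $P\to\prod_{i\neq 2,j}H_{ij}$ (your $N_2$, the paper's $\widetilde{N}$) equals $N$; both apply Lemma \ref{lem:n2inva} to the splitting $\C^r=V_1'\oplus V_2$; and both finish with the fact that a normal abelian subgroup of the primitive group $\widetilde{H}_{2j}$ is central, hence scalar. Where you genuinely diverge is the mechanism of that middle step. The paper takes $A\in\widetilde{N}$ and $B\in P$, applies the invariance of the bidegree-$(d-1,1)$ part of $F$ to the commutator $C=[A,B]$ (which is the identity on $V_1'$), and uses the linear independence of the coefficient forms to force $[A_{2j},B_{2j}]=I$; this places $\phi_{2j}(\widetilde{N})$ inside $Z(\tilde{\bar{H}}_{2j})$, and one then needs the extra observation that $Z(\tilde{\bar{H}}_{2j})$ is normal in $\widetilde{H}_{2j}$. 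You instead prove that $N_2$ is abelian outright, by noting that it acts diagonally on $S^{d-1}(V_1'^*)$ and transporting commutativity through the $G$-equivariant embedding $V_2\cong U\subseteq S^{d-1}(V_1'^*)$ supplied by Lemma \ref{lem:n2inva}, and you get normality of $\phi_{2j}(N_2)$ in $\widetilde{H}_{2j}$ for free from the directly verified normality of $N_2$ in $G$. Your route avoids commutators at the cost of a little representation-theoretic bookkeeping (the dual/contragredient twist in $V_2\cong U$, which you gloss over but which is harmless since commutativity is insensitive to it); the paper's route yields the slightly stronger fact that $\phi_{2j}(\widetilde{N})$ centralizes all of $\tilde{\bar{H}}_{2j}$, not merely $\phi_{2j}(N_2)$. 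One wording slip worth fixing: you open by saying it suffices that ``the image of $P$ in $\prod_j H_{2j}$ is trivial.'' That is stronger than what is needed, is false in general (the image of $P$ in $H_{2j}$ is $\bar{H}_{2j}$, which can be large), and is not equivalent to $N_2=N$; what you need, and what your argument actually proves, is that the image of $N_2$ in $\prod_j H_{2j}$ is trivial, i.e.\ $N_2\subseteq N$.
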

	\begin{proof}
		For all $1 \leq i \leq m$ and for all $1 \leq j \leq k_i$, we have the following commutative diagram:
		\begin{center}
			\begin{tikzcd}
				P \arrow[r, "\phi_{ij}|P",two heads] \arrow[d, hook] & \tilde{\bar{H}}_{ij} \arrow[d, hook] \arrow[r, " \pi_{ij}|\tilde{\bar{H}}_{ij}\;\;\;", two heads] & \bar{H}_{ij} \arrow[d, hook] \\
				{\rm Stab}_G(W_{ij}) \arrow[r, "\phi_{ij}",two heads]           & \widetilde{H}_{ij} \arrow[r, "\pi_{ij}", two heads]                       & H_{ij}\;.                      
			\end{tikzcd}
		\end{center}
		Note that the three injections are inclusions of normal subgroups. Consider the exact sequence 
		$$1\longrightarrow \widetilde{N}\longrightarrow P\longrightarrow H_{11}\times\cdots\times H_{1k_1}\times H_{31}\times\cdots\times H_{3k_1}\times\cdots\times H_{m1}\times\cdots\times H_{mk_m},$$
		where the last morphism is given by $g\mapsto([g|W_{11}],\ldots,[g|W_{1k_1}],[g|W_{31}],\ldots,[g|W_{mk_m}])$ and $\widetilde{N}$ is defined as its kernel. In order to prove the lemma, it suffices to show $\widetilde{N}\subset N$.
		Recall that the underlying basis $(e_1,\ldots,e_{r})$ of $\C^{r}$ is compatible with the decomposition  $\C^{r}=\oplus_{i,j} W_{ij}$. For any $$A=\Diag(\lambda_{11}I_{r_{11}},\dots,\lambda_{1k}I_{r_{1k_1}},A_{21},A_{22},\dots,A_{2k_2},\lambda_{31}I_{r_{31}},\dots,\lambda_{mk_m}I_{r_{mk_m}})\in\widetilde{N},$$
		$$B=\Diag(B_{11},\dots,B_{1k_1},\dots,B_{m1},\dots,B_{mk_m})\in P,$$
		we have the commutator $C=[A,B]$ is the identity matrix, by $C(F)=F$, the shape of $A$, $B$ and Lemma \ref{lem:n2inva}. In fact, from the decomposition $\C^r=V_1'\oplus V_2$, we have a decomposition $F=\sum_{i=0}^dF_i$, where $F_i\in S^{d-i}(V_1'^*)\otimes S^i(V_2^*)$. By the existence of the monomial  $\mathfrak{m}$, we have $F_1\neq0$. Then by Lemma \ref{lem:n2inva}, we have $F_1=\sum_{1\leq i\leq k_2, 1\leq j\leq r_{2i}}F_{ij}'x_{ij}$, where $F_{ij}'\in S^{d-1}(V_1'^*)$ are $k_2\cdot r_{21}$ $\C$-linearly independent forms of degree $d-1$ and $\{x_{ij}\}$ forms a basis of $V_2^*$. 
			Since the commutator
			$C=\Diag(I_{r_{11}},\dots,I_{r_{1k_1}},[A_{21},B_{21}],[A_{22},B_{22}],\dots,[A_{2k_2},B_{2k_2}],I_{r_{31}},\dots,I_{r_{mk_m}})$ and $C(F)=F$, we infer that $C(F_1)=F_1$ and $[A_{2i},B_{2i}]=I_{r_{2i}}$ for all $i$.
		
		Therefore, we have $\phi_{2j}(A)\in Z(\tilde{\bar{H}}_{2j} )$ for any $1\leq j\leq k_2$. Since $\tilde{\bar{H}}_{2j}\vartriangleleft\widetilde{H}_{2j}$, we have $Z(\tilde{\bar{H}}_{2j})\vartriangleleft\widetilde{H}_{2j}$. Then by primitivity of $\widetilde{H}_{2j}$, we have
		$$Z(\tilde{\bar{H}}_{2j})\subset Z(\widetilde{H}_{2j})\subset\langle\{\lambda I_{r_{2j}}|\lambda\in\C^\times\}\rangle.$$
		Thus $A_{2j}\in\langle\{\lambda I_{r_{2j}}|\lambda\in\C^\times\}\rangle$ for any $1\leq j\leq k_2$. Therefore, $A\in N$ and $N=\widetilde{N}$. Then $|P/N|=|P/\widetilde{N}|\leq \frac{\prod_{i,j}|H_{ij}|}{\prod_{1\leq j\leq k_2}|H_{2j}|}$ and the lemma follows from \eqref{eq:refinedbound}.
	\end{proof}

	By similar arguments in Lemma \ref{lem:type2bound}, we obtain the following lemma using \eqref{eq:refinedbound}.
	
	\begin{lemma}\label{lem:classify}
		Suppose $1\le a\le m$ and $k_a\ge 2$. If there is a monomial $\mathfrak{m}\in S^{d-1}(W_{a 1}^*)\otimes S^1(W_{a 2}^*)$ in $F$, then one of the following statements holds:
		\begin{enumerate}
			\item [(1)] $k_{a}=2$ and $|G|\leq\frac{B(G,F)}{|H_{a 1}|}\le \frac{R(l(G),d)\cdot d^{r} \cdot r!}{{\rm JC}(r_{a 1})}$;
			\item [(2)] $k_{a}\geq3$ and $|G|\leq\frac{B(G,F)}{2\cdot |H_{a 1}|}\le \frac{R(l(G),d)\cdot d^{r} \cdot r!}{2\cdot {\rm JC}(r_{a 1})}$.
		\end{enumerate}
	\end{lemma}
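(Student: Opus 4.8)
The plan is to reproduce the strategy of Lemma~\ref{lem:type2bound} to eliminate one primitive constituent, and then to produce the extra factor $2$ in case (2) from the permutation group $K_a$ together with the block-diagonal subgroup $N$. Throughout I use the refined inequality \eqref{eq:refinedbound}, and I repeatedly invoke that the blocks $W_{a1},\dots,W_{ak_a}$ are $G$-conjugate, so that $|H_{a1}|=|H_{a2}|=\cdots=|H_{ak_a}|$.

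\textbf{Step 1 (saving $|H_{a1}|$).} The key observation is that although $W_{a2}$ is not $G$-stable, it \emph{is} irreducible under ${\rm Stab}_G(W_{a2})$ by $\bf{Set\textrm{-}up}$~\ref{setup}, and the subspace $V':=\Oplus_{(i,j)\neq(a,2)}W_{ij}$ is ${\rm Stab}_G(W_{a2})$-stable. So I apply Lemma~\ref{lem:n2inva} to the group ${\rm Stab}_G(W_{a2})$ and the decomposition $\C^r=V'\oplus W_{a2}$: since $\mathfrak{m}\in S^{d-1}(W_{a1}^*)\otimes S^1(W_{a2}^*)\subseteq S^{d-1}(V'^*)\otimes S^1(W_{a2}^*)$, the component $F^{(d-1,1)}$ is nonzero and equals $\sum_t F'_t x'_t$ with $\{x'_t\}$ a basis of $W_{a2}^*$ and the $F'_t\in S^{d-1}(V'^*)$ linearly independent. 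Now I run the commutator argument of Lemma~\ref{lem:type2bound}: for the kernel $\widetilde N$ of $P\to\prod_{(i,j)\neq(a,2)}H_{ij}$ and any $A\in\widetilde N$, $B\in P$, the commutator $C=[A,B]$ is the identity off $W_{a2}$ and preserves $F^{(d-1,1)}$, so linear independence of the $F'_t$ forces $[A_{a2},B_{a2}]=I$; hence $A_{a2}\in Z(\tilde{\bar H}_{a2})\subseteq Z(\widetilde H_{a2})$ lies in the scalars by primitivity of $\widetilde H_{a2}$. Thus $\widetilde N=N$ and $|P/N|\le\prod_{i,j}|H_{ij}|/|H_{a2}|$. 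Feeding this into \eqref{eq:refinedbound} with $|N|\le d^s$ (Theorem~\ref{thm:diag}) gives $|G|\le B(G,F)/|H_{a1}|$, which already settles case (1).

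\textbf{Step 2 (the factor $2$ for $k_a\ge 3$).} Record the directed graph $D$ on $\{1,\dots,k_a\}$ having an edge $j\to j'$ whenever $F$ has a nonzero $S^{d-1}(W_{aj}^*)\otimes S^1(W_{aj'}^*)$-component; since each $V_i$ is $G$-stable and $\psi_a(G)=K_a$, the set $D$ is $K_a$-invariant and contains $(1,2)$. If $K_a\neq S_{k_a}$, then $[S_{k_a}:K_a]\ge 2$, and keeping this factor in \eqref{eq:refinedbound} upgrades Step~1 to $|G|\le B(G,F)/(2|H_{a1}|)$. If instead $K_a=S_{k_a}$, then by $2$-transitivity of $S_{k_a}$ on ordered pairs the graph $D$ is complete, so $F$ couples every ordered pair of blocks of $V_a$; hence every $\Diag(\lambda_{ij}I_{r_{ij}})\in N$ satisfies $\lambda_{aj}^{d-1}\lambda_{aj'}=1$ for all $j\neq j'$. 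For $k_a\ge 3$ this system forces all $\lambda_{aj}$ to be a common $d$-th root of unity, so the image of $N$ under projection to the $V_a$-coordinates has order $\le d$; combined with a further application of Theorem~\ref{thm:diag} to the subgroup of $N$ acting trivially on $V_a$ (bounded by $d^{s-k_a+1}$), this yields $|N|\le d^{s-k_a+2}\le d^{s-1}$, i.e.\ $|N|/d^s\le 1/d\le 1/3$. Together with the saving $|H_{a1}|$ from Step~1 this again gives $|G|\le B(G,F)/(2|H_{a1}|)$. In all cases the second inequality of the lemma follows from $|H_{ij}|\le{\rm JC}(r_{ij})$ applied termwise.

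\textbf{Main obstacle.} Step~1 is essentially a transcription of Lemma~\ref{lem:type2bound}, once one notices that the group to feed into Lemma~\ref{lem:n2inva} is ${\rm Stab}_G(W_{a2})$ rather than $G$, which is exactly what restores the linear independence of the coefficient forms. The genuinely new and delicate part is Step~2: one must handle the dichotomy on $K_a$ and, in the branch $K_a=S_{k_a}$, convert the complete coupling encoded by $D$ into a clean, $d$-uniform bound $|N|\le d^{s-1}$. The case $k_a=2$ illuminates the statement, since there the analogous system $\lambda_{a1}^{d-1}\lambda_{a2}=\lambda_{a2}^{d-1}\lambda_{a1}=1$ only forces $|N|_{V_a}|\le d(d-2)$, which degenerates as $d$ grows and yields no uniform factor $2$—precisely why case (1) omits it.
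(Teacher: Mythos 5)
Your proof is correct and follows essentially the same route as the paper's: Step 1 reproduces the paper's argument (apply Lemma~\ref{lem:n2inva} to ${\rm Stab}_G(W_{a2})$ with the decomposition $\C^r=\big(\oplus_{(i,j)\neq(a,2)}W_{ij}\big)\oplus W_{a2}$, then run the commutator argument of Lemma~\ref{lem:type2bound} to get $N=\widetilde N$ and $|P/N|\le\prod_{i,j}|H_{ij}|/|H_{a2}|$), and Step 2 is exactly the paper's dichotomy on $[S_{k_a}:K_a]$, with your $2$-transitivity/coupling argument supplying the details behind the paper's phrase ``similar to the proof of Lemma~\ref{lem:boundtypeII}~(2)''. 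The only immaterial difference is that in the branch $K_a=S_{k_a}$ the paper applies Theorem~\ref{thm:diag} directly to $N$ with the $k_a$ blocks of $V_a$ merged (all $\lambda_{aj}$ being equal), getting $|N|\le d^{s-k_a+1}$, whereas your projection-plus-kernel variant gives the slightly weaker $d^{s-k_a+2}$, which still suffices since $k_a\ge 3$.
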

	Note that the spaces $\oplus_{(i,j)\neq(a,2)} W_{ij}$ and $W_{a2}$ are ${\rm Stab}_G(W_{a2})$-stable subspaces of $\C^r$. Since $W_{a2}$ is an irreducible ${\rm Stab}_G(W_{a2})$-stable subspace, we can apply Lemma \ref{lem:n2inva} as in the proof of Lemma \ref{lem:type2bound} to prove that $N$ is equal to the kernel of the morphism $P\longrightarrow \prod_{(i,j)\neq(a,2)}H_{ij}$, $g\mapsto([g|W_{ij}])$. From this, we have $|P/N|\leq \frac{\prod_{i,j}|H_{ij}|}{|H_{a2}|}$ in Lemma \ref{lem:classify} (1) and (2). For $k_a\geq 3$, 
		if $[S_{k_a}:K_a]\geq2$, then (2) follows; if $[S_{k_a}:K_a]=1$, then $\frac{|N|}{d^s}\leq \frac{1}{d^{k_a-1}}$ (similar to the proof of Lemma \ref{lem:boundtypeII} (2)) and we are done by \eqref{eq:refinedbound}.
	\begin{lemma}\label{lem:d1d2}
		Let $1\le i_1\le m$, $1\le j_1\le k_{i_1}$, $0<d_1<d$. Suppose there is a monomial $\mathfrak{m}\in S^{d_1}(W_{i_1 j_1}^*)\otimes S^{d-d_1}(\oplus_{(i_2,j_2)\neq (i_1,j_1)} W_{i_2 j_2}^*)$ in $F$. Then $\bar{H}_{i_1 j_1}$ contains a normal subgroup $\bar{\bar{H}}_{i_1 j_1}$ preserving a non-zero form of degree $d_1$ and $$|G|\le \frac{B(G,F)}{[H_{i_1 j_1}:\bar{\bar{H}}_{i_1 j_1}]}\le \frac{R(l(G),d)\cdot|\bar{\bar{H}}_{i_1 j_1}|\cdot d^r\cdot r!}{{\rm JC}(r_{i_1 j_1})}.$$
	\end{lemma}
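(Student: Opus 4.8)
The plan is to run the strategy of Lemma \ref{lem:type2bound}, but now to extract a \emph{form}-preserving normal subgroup rather than merely enlarging the kernel $N$. Write $W:=W_{i_1j_1}$ and $W':=\oplus_{(i,j)\neq(i_1,j_1)}W_{ij}$, so that $\C^r=W\oplus W'$ is compatible with the underlying basis. Decomposing $F$ by bidegree in this coarse splitting, the hypothesis that the monomial $\mathfrak{m}\in S^{d_1}(W^*)\otimes S^{d-d_1}(W'^*)$ occurs in $F$ says precisely that the bidegree-$(d_1,d-d_1)$ component $\Phi:=F^{(d_1,d-d_1)}$ is non-zero. Since the principal subgroup $P$ fixes every $W_{ij}$, it preserves both $W$ and $W'$, respects the bigrading, and hence $g(\Phi)=\Phi$ for all $g\in P$. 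I would then let $U\subseteq S^{d_1}(W^*)$ and $U'\subseteq S^{d-d_1}(W'^*)$ be the minimal subspaces with $\Phi\in U\otimes U'$; then $U,U'\neq0$, $\dim U=\dim U'$, and $\Phi$ is a full-rank (non-degenerate) element of $U\otimes U'$. The standard minimality argument (as used for Lemma \ref{lem:n2inva}) shows $U$ is $\widetilde{H}_{i_1j_1}$-stable, and symmetrically that $U'$ is stable under each $g|W'$, $g\in P$.

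Next I would construct $\bar{\bar H}_{i_1j_1}$. The tempting choice, the kernel of the action of $\bar H_{i_1j_1}$ on $\P(U)$, is wrong: it preserves all of $U$ but gives the wrong count. Instead I set
$$\bar{\bar H}_{i_1j_1}:=\bigl\{[g|W]\ \bigm|\ g\in P,\ [g|W_{ij}]=1 \text{ for all }(i,j)\neq(i_1,j_1)\bigr\},$$
the image in $\bar H_{i_1j_1}$ of those $g\in P$ that are scalar on every block other than $W$. As the image of the normal subgroup $\ker\bigl(\phi(P)\to\prod_{(i,j)\neq(i_1,j_1)}H_{ij}\bigr)$, this $\bar{\bar H}_{i_1j_1}$ is normal in $\bar H_{i_1j_1}$. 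For such a $g$, the restriction $g|W'$ is a block-scalar matrix, so it acts on $U'$ by a diagonalizable operator; the invariance $g(\Phi)=\Phi$ forces the action of $g|W$ on $U$ to be the inverse-transpose of the action on $U'$, through the isomorphism $U'^{*}\cong U$ determined by the non-degenerate tensor $\Phi$, hence also diagonalizable. Running over all such $g$, the defining block-scalars commute, so these operators on $U$ form a commuting family of semisimple operators; they are therefore simultaneously diagonalizable and share a common eigenvector $0\neq\varphi\in U$. Every element of $\bar{\bar H}_{i_1j_1}$ then fixes $[\varphi]$, i.e.\ $\bar{\bar H}_{i_1j_1}$ preserves the non-zero form $\varphi$ of degree $d_1$.

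Finally I would assemble the bound from \eqref{eq:refinedbound}. Using $\prod_i[S_{k_i}:K_i]\geq1$ and $|N|\leq d^s$ from Theorem \ref{thm:diag}, it remains to bound $|P/N|=|\phi(P)|$. Projecting $\phi(P)$ to $\prod_{(i,j)\neq(i_1,j_1)}H_{ij}$ has image of order at most $\prod_{(i,j)\neq(i_1,j_1)}|H_{ij}|$ and kernel mapping isomorphically onto $\bar{\bar H}_{i_1j_1}$, whence $|\phi(P)|\leq|\bar{\bar H}_{i_1j_1}|\cdot\prod_{(i,j)\neq(i_1,j_1)}|H_{ij}|$. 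Substituting into \eqref{eq:refinedbound} gives $|G|\leq B(G,F)\cdot|\bar{\bar H}_{i_1j_1}|/|H_{i_1j_1}|=B(G,F)/[H_{i_1j_1}:\bar{\bar H}_{i_1j_1}]$, the first claimed inequality. The second then follows exactly as in Lemma \ref{lem:FTRandGF}: replace each $|H_{ij}|$ with $(i,j)\neq(i_1,j_1)$ by ${\rm JC}(r_{ij})\geq|H_{ij}|$, and compare the intrinsic multiplicity factorials appearing in $B(G,F)$ with the subdegree multiplicity factorials appearing in $R(l(G),d)$ by means of Lemma \ref{lem:factorial}.

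I expect the heart of the argument to be the construction in the second paragraph. The delicate point is that $\bar H_{i_1j_1}$ in general fixes only the subspace $U$ and no single form, so one must pass to the \emph{specific} normal subgroup arising from the block-scalar elements of $P$; it is exactly the non-degeneracy of $\Phi\in U\otimes U'$ that converts the $W'$-action into a commuting semisimple family on $U$ and thereby produces the common eigen-form $\varphi$. Matching this construction with the counting $|\phi(P)|\leq|\bar{\bar H}_{i_1j_1}|\prod_{(i,j)\neq(i_1,j_1)}|H_{ij}|$ is what makes the index $[H_{i_1j_1}:\bar{\bar H}_{i_1j_1}]$ in the statement come out correctly, and verifying the convention-dependent inverse-transpose relation is the one computation I would carry out carefully.
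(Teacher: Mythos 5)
Your proposal is correct, and its group-theoretic skeleton is exactly the paper's: you define $\bar{\bar{H}}_{i_1j_1}$ precisely as the paper does (the image in $H_{i_1j_1}$ of the kernel $\widetilde{N}_{i_1j_1}$ of $P\to\prod_{(i,j)\neq(i_1,j_1)}H_{ij}$), normality comes out the same way, and your count $|\phi(P)|\le|\bar{\bar{H}}_{i_1j_1}|\cdot\prod_{(i,j)\neq(i_1,j_1)}|H_{ij}|$ fed into \eqref{eq:refinedbound}, followed by $|H_{ij}|\le{\rm JC}(r_{ij})$ and Lemma \ref{lem:factorial}, reproduces the paper's chain $|G|\le|P|\prod_i k_i!\le |\widetilde{N}_{i_1j_1}|\prod_i k_i!\prod_{(i,j)\neq(i_1,j_1)}|H_{ij}|\le d^s\prod_i k_i!\prod_{i,j}|H_{ij}|/[H_{i_1j_1}:\bar{\bar{H}}_{i_1j_1}]$. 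The one place you genuinely diverge is the step the paper compresses into a single sentence (``From the existence of $\mathfrak{m}$, we infer that $\bar{\bar{H}}_{i_1j_1}$ preserves a non-zero form of degree $d_1$''): you produce the invariant form via the minimal subspaces $U\otimes U'$, non-degeneracy of $\Phi$, and simultaneous diagonalization of the commuting semisimple family induced by the block-scalar actions. That argument is valid, but heavier than necessary: since each $g\in\widetilde{N}_{i_1j_1}$ restricts to a block-scalar matrix on $W':=\oplus_{(i,j)\neq(i_1,j_1)}W_{ij}$, it multiplies every monomial in the $W'$-variables of a fixed multidegree by one common scalar $c_g$; writing the multidegree component of $F$ containing $\mathfrak{m}$ as $\sum_\beta G_\beta\cdot\mathfrak{n}_\beta$ with $G_\beta\in S^{d_1}(W_{i_1j_1}^*)$ and $\mathfrak{n}_\beta$ the distinct $W'$-monomials of that multidegree, invariance of $F$ forces $(g|W_{i_1j_1})(G_\beta)=c_g^{-1}G_\beta$ for every $\beta$, so any single non-zero coefficient form $G_\beta$ is already projectively invariant under all of $\phi_{i_1j_1}(\widetilde{N}_{i_1j_1})$ --- no rank or diagonalization argument is needed. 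Finally, both you and the paper gloss the same harmless point: passing from ``every element of $\bar{\bar{H}}_{i_1j_1}$ fixes $[\varphi]$'' to ``$\bar{\bar{H}}_{i_1j_1}$ preserves $\varphi$'' in the sense of the paper's convention (2.4) requires rescaling each lift by a $d_1$-th root of its eigenvalue, which is immediate over $\C$.
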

	
	\begin{proof}
		Without loss of generality, we assume $(i_1,j_1)=(1,1)$. Consider the exact sequence 
		\begin{equation}\label{eq:N11}
			1\longrightarrow \widetilde{N}_{11}\longrightarrow P\longrightarrow H_{12}\times\cdots\times H_{1k_1}\times H_{21}\times\cdots\times H_{2k_2}\times\cdots\times H_{m1}\times\cdots\times H_{mk_m},
		\end{equation}
		where the last morphism is given by $g\mapsto([g|W_{12}],\ldots,[g|W_{1k_1}],[g|W_{21}],\ldots,[g|W_{mk_m}])$ and $\widetilde{N}_{11}$ is defined as its kernel. Let $\bar{\bar{H}}_{1 1}:=\pi_{11}\circ\phi_{11}(\widetilde{N}_{11})$. Then $\bar{\bar{H}}_{1 1}\vartriangleleft \bar{H}_{11}$ and we have the exact sequence
		\begin{equation}\label{eq:barbarH}
			1\longrightarrow N\longrightarrow \widetilde{N}_{11}\longrightarrow \bar{\bar{H}}_{11}\longrightarrow 1.
		\end{equation}
		From the exact sequences \eqref{eq:1associ}, \eqref{eq:2associ}, \eqref{eq:N11}, \eqref{eq:barbarH} and by Theorem \ref{thm:diag}, we have $$|G|\leq |P|\prod_{ i} k_i !\leq \frac{|\widetilde{N}_{11}| \prod_{i} k_i! \prod_{i,j} |H_{ij}|}{|H_{11}|}\leq \frac{|N| |\bar{\bar{H}}_{11}| \prod_{i} k_i! \prod_{i,j} |H_{ij}|}{|H_{11}|}\leq  \frac{d^s \prod_{i} k_i! \prod_{i,j} |H_{ij}|}{[H_{11}:\bar{\bar{H}}_{11}]}, $$
		which implies the inequalities in the lemma by definitions of canonical bounds and Fermat-test ratios. From the existence of $\mathfrak{m}$, we infer that $\bar{\bar{H}}_{1 1}$ preserves a non-zero form of degree $d_1$.
	\end{proof}
	
	\begin{lemma}\label{lem:boundtypeII}
		Let $1\le i_1\le m$, $1\le j_1\le k_{i_1}$ and $r_{i_1 j_1}\ge 2$. We define $$\mathcal{A}:=\{(i_2,j_2)|\; (i_2,j_2)\neq (i_1,j_1),\; \exists \text{ a monomial }\mathfrak{m}\in S^{d-1}(W_{i_1 j_1}^*)\otimes S^{1}(W_{i_2 j_2}^*) \text{ in }F \}.$$ Let $c$ be the cardinality of $\mathcal{A}$. If the form $F_{i_1 j_1}$ is either zero or not smooth, then $c\neq 0$ and the following statements hold:
		\begin{enumerate}
			\item [(1)]  $|G|\le\frac{B(G,F)}{|H_{i_2 j_2}|}\le \frac{R(l(G),d)\cdot d^{r} \cdot r!}{{\rm JC}(r_{i_2 j_2})}$ for all $(i_2,j_2)\in \mathcal{A}$;
			
			\item [(2)] $|G|\le\frac{B(G,F)}{d^{c-1}} \le \frac{R(l(G),d)\cdot d^{r} \cdot r!}{d^{c-1}}$;
			
			\item [(3)] If $c=1$, $r_{i_2 j_2}=1$ for $(i_2,j_2)\in\mathcal{A}$ and $G$ contains $\mathcal{I}_{r,d}$, then $F_{i_1 j_1}$ is not zero and $H_{i_1 j_1}$ admits an $F_{i_1 j_1}$-lifting preserving some non-zero form of degree $d-1$. 
		\end{enumerate}
	\end{lemma}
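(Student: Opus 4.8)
The plan is to prove the four assertions ($c\neq0$, (1), (2), (3)) in turn, using the associated exact sequences \eqref{eq:1associ}, \eqref{eq:2associ} and the refined inequality \eqref{eq:refinedbound} throughout. First I would establish $c\neq0$ together with (1). Writing $\C^r=W_{i_1j_1}\oplus W'$ with $W'=\oplus_{(i,j)\neq(i_1,j_1)}W_{ij}$ and splitting $F=F_{i_1j_1}+F_2$ accordingly, the hypothesis that $F_{i_1j_1}$ is zero or not smooth means $F_{i_1j_1}$ is not smooth (a zero form is not smooth), so $r>r_{i_1j_1}$ and Lemma \ref{lem:smtosm} applies with $k=r_{i_1j_1}\geq2$: it produces a monomial $\mathfrak m\in F$ of degree $d-1$ in the $W_{i_1j_1}$-variables times a single variable lying in some $W_{i_2j_2}$ with $(i_2,j_2)\neq(i_1,j_1)$. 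Thus $\mathfrak m\in S^{d-1}(W_{i_1j_1}^*)\otimes S^1(W_{i_2j_2}^*)$, so $(i_2,j_2)\in\mathcal A$ and $c\neq0$. For (1), fix $(i_2,j_2)\in\mathcal A$ and apply Lemma \ref{lem:d1d2} to the corresponding monomial with the roles reversed: take the distinguished constituent to be $W_{i_2j_2}$ and $d_1=1$, since $\mathfrak m\in S^{1}(W_{i_2j_2}^*)\otimes S^{d-1}(\oplus_{(i,j)\neq(i_2,j_2)}W_{ij}^*)$. This yields a subgroup $\bar{\bar H}_{i_2j_2}$, normal in $H_{i_2j_2}$ (normal by its construction as the image of a normal subgroup of $P$, using that ${\rm Stab}_G(W_{i_2j_2})$ permutes the remaining $W_{ij}$), which preserves a nonzero \emph{linear} form on $W_{i_2j_2}$, together with the bound $|G|\le B(G,F)/[H_{i_2j_2}:\bar{\bar H}_{i_2j_2}]$. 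The key point is that $\bar{\bar H}_{i_2j_2}$ is trivial: being a normal subgroup of the primitive (hence irreducible) group $H_{i_2j_2}$ that fixes a line in $W_{i_2j_2}^*$, Clifford's theorem together with primitivity forces $W_{i_2j_2}^*$ to be $\bar{\bar H}_{i_2j_2}$-isotypic, and the invariant line makes the isotype one-dimensional, so $\bar{\bar H}_{i_2j_2}$ acts by scalars and is trivial in $\PGL(W_{i_2j_2})$ (for $r_{i_2j_2}=1$ this is automatic). Hence $[H_{i_2j_2}:\bar{\bar H}_{i_2j_2}]=|H_{i_2j_2}|$, which is (1); the second inequality follows from $B(G,F)=R(l(G),d)\,d^r r!$ and $|H_i|\le\JC(r_i)$.

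For (2), by \eqref{eq:refinedbound} it suffices to prove $|N|\le d^{\,s-c+1}$, as the other factors in \eqref{eq:refinedbound} are at most $1$. For each $(i_2,j_2)\in\mathcal A$ the monomial above forces every $A=\Diag(\lambda_{ij}I_{r_{ij}})\in N$ to satisfy $\lambda_{i_1j_1}^{\,d-1}\lambda_{i_2j_2}=1$, i.e. $\lambda_{i_2j_2}=\lambda_{i_1j_1}^{\,1-d}$. In particular all the scalars $\lambda_{i_2j_2}$ with $(i_2,j_2)\in\mathcal A$ are \emph{equal}, so every element of $N$ acts as the single scalar $\lambda_{i_1j_1}^{\,1-d}$ on the merged block $\bigoplus_{(i_2,j_2)\in\mathcal A}W_{i_2j_2}$. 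After reordering coordinates so that this merged block is contiguous, $N$ consists of block-scalar matrices for a block decomposition with $s-c+1$ blocks, so Theorem \ref{thm:diag} gives $|N|\le d^{\,s-c+1}$. Substituting into \eqref{eq:refinedbound} yields (2).

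For (3), assume $c=1$ with unique element $(i_2,j_2)\in\mathcal A$ having $r_{i_2j_2}=1$, say $W_{i_2j_2}=\langle e_\ell\rangle$, and $\mathcal I_{r,d}\subset G$. I would first show $F_{i_1j_1}\neq0$. By $c=1$ and $r_{i_2j_2}=1$, every monomial of $F$ of degree $d-1$ in the $W_{i_1j_1}$-variables and degree $1$ elsewhere is divisible by $x_\ell$, so the $(d-1,1)$-component equals $G\,x_\ell$ for a nonzero $G\in S^{d-1}(W_{i_1j_1}^*)$ (nonzero since $c\neq0$). If $F_{i_1j_1}=0$, then at any point $p\in W_{i_1j_1}$ the partials $\partial F/\partial x_k(p)$ vanish for every $W_{i_1j_1}$-variable $x_k$ and for every $x_k$ with $k\notin W_{i_1j_1}\cup\{\ell\}$, while $\partial F/\partial x_\ell(p)=G(p)$. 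Since $G$ is a nonzero form of degree $d-1\geq2$ on $W_{i_1j_1}\cong\C^{r_{i_1j_1}}$ with $r_{i_1j_1}\geq2$, it has a nontrivial zero $p_0$, at which every partial of $F$ vanishes, contradicting smoothness of $F$. Hence $F_{i_1j_1}\neq0$.

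It then remains to lift $H_{i_1j_1}$. Each $g\in{\rm Stab}_G(W_{i_1j_1})$ preserves the component $G\,x_\ell$, and unique factorization forces $g(x_\ell)=\kappa(g)\,x_\ell$ and $g(G)=\kappa(g)^{-1}G$; in particular ${\rm Stab}_G(W_{i_1j_1})$ stabilizes the line $W_{i_2j_2}$, giving a character $\nu\colon\widetilde H_{i_1j_1}\to\C^\times$ with $h(G)=\nu(h)G$. Since $\mathcal I_{r,d}\subset G$, the scalar group $\mu_d=\langle\xi_d I_{r_{i_1j_1}}\rangle$ lies in $\widetilde H_{i_1j_1}$, and because $\gcd(d-1,d)=1$ the restriction $\nu|_{\mu_d}\colon\mu_d\to\mu_d$, $\lambda\mapsto\lambda^{d-1}$, is an isomorphism; moreover $\mu_d$ is exactly the scalar subgroup, hence the kernel of $\pi\colon\widetilde H_{i_1j_1}\twoheadrightarrow H_{i_1j_1}$, as any scalar in $\widetilde H_{i_1j_1}$ preserves $F_{i_1j_1}$ and so is a $d$-th root of unity. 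Setting $\widehat H:=\ker\nu=\{h\in\widetilde H_{i_1j_1}:h(G)=G\}$, we get $\widehat H\subset\Aut(F_{i_1j_1})$ preserving the nonzero degree-$(d-1)$ form $G$, with $\widehat H\cap\mu_d=1$, so $\pi|_{\widehat H}$ is injective. \textbf{The main obstacle is the surjectivity of $\pi|_{\widehat H}$ onto $H_{i_1j_1}$}, which is equivalent to $\nu(\widetilde H_{i_1j_1})=\mu_d$, i.e. to the vanishing of the character $\bar\nu:=\nu^d$ of $H_{i_1j_1}$ (note $\nu^d|_{\mu_d}=1$); equivalently, one must show that the eigenvalue $\kappa(g)$ of each $g\in{\rm Stab}_G(W_{i_1j_1})$ on the line $W_{i_2j_2}$ is a $d$-th root of unity. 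I expect to establish this by analysing the $G$-action on the irreducible summand $V_{i_2}\supset W_{i_2j_2}$ relative to $\mathcal I_{r,d}$ (all $W_{i_2j}$ are one-dimensional, so $V_{i_2}$ is a monomial representation on which $\xi_d I_r$ acts by $\xi_d$), together with the smoothness-driven relation that $G$ has no common zero with the Jacobian of $F_{i_1j_1}$ on $W_{i_1j_1}$; once $\bar\nu=1$ is known, $\widehat H$ is the desired $F_{i_1j_1}$-lifting preserving $G$.
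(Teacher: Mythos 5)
Your treatment of $c\neq 0$, of parts (1) and (2), and of the first half of part (3) is essentially sound, though in two places you take a different route from the paper. For (1) the paper simply invokes Lemmas \ref{lem:type2bound} and \ref{lem:classify} (which were set up for exactly this purpose, according to whether $W_{i_2j_2}$ lies in the same irreducible summand as $W_{i_1j_1}$ or not), whereas you apply Lemma \ref{lem:d1d2} with the roles reversed ($d_1=1$ on $W_{i_2j_2}$) and then show the resulting normal subgroup $\bar{\bar H}_{i_2j_2}$ is trivial by a Clifford-theory argument; this works (the preimage of $\bar{\bar H}_{i_2j_2}$ in the primitive group $\widetilde H_{i_2j_2}$ is normal, each of its elements scales the invariant linear form, and isotypicality of the restriction forces these elements to be scalar), and your justification that $\bar{\bar H}_{i_2j_2}$ is normal in $H_{i_2j_2}$, not merely in $\bar H_{i_2j_2}$, is the right one. (One slip: $B(G,F)=R(l(G),d)\,d^r r!$ is not an equality in general, only $\le$, since $R(l(G),d)$ uses ${\rm JC}$-values; your intended inequality is still correct.) Part (2) coincides with the paper's argument: the relations $\lambda_{i_1j_1}^{d-1}\lambda_{i_2j_2}=1$ merge the $c$ blocks indexed by $\mathcal{A}$, and Theorem \ref{thm:diag} gives $|N|\le d^{s-c+1}$. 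In (3), your direct partial-derivative proof that $F_{i_1j_1}\neq 0$ is correct and self-contained, where the paper instead cites \cite[Lemma 3.2]{OY19}; and your $\widehat H=\{h\in\widetilde H_{i_1j_1}\mid h(G)=G\}$ is exactly the paper's group $\{g_1\mid (g_1,1)\in{\rm Ker}(\varphi_2)\}$, with the same injectivity argument.

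However, part (3) is not proved, and you acknowledge this: you never establish surjectivity of $\pi|_{\widehat H}$, i.e.\ that $\nu(\widetilde H_{i_1j_1})=\mu_d$, equivalently that every $g\in{\rm Stab}_G(W_{i_1j_1})$ multiplies $x_\ell$ by a $d$-th root of unity. This is a genuine gap, not a routine verification: $\nu^d$ descends to a character of the primitive group $H_{i_1j_1}$, and primitive groups can have nontrivial abelianization (e.g.\ $A_4$, $S_4$ in $\PGL(2,\C)$), so no purely group-theoretic argument can force $\nu^d=1$; some additional input tied to the degree $d$ and the distinguished line $W_{i_2j_2}$ is required. This is precisely the point at which the paper's proof brings in its second homomorphism: it considers $\varphi_2\colon M\to\widetilde H_{i_2j_2}\cong C_d$, $(g_1,g_2)\mapsto g_2$, asserts this is surjective onto a group of order exactly $d$, and then concludes by counting, $|{\rm Ker}(\varphi_2)|=|M|/d=|H_{i_1j_1}|$, so that the injective map $\varphi_1|_{{\rm Ker}(\varphi_2)}$ is bijective. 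Your closing sentence ("I expect to establish this by analysing the $G$-action on $V_{i_2}$ \ldots together with the smoothness-driven relation \ldots") is a plan, not an argument; nothing in it rules out ${\rm Im}(\nu)\supsetneq\mu_d$. Until that step is supplied (by proving $|{\rm Im}(\nu)|\le d$, which is what the paper's claim $\widetilde H_{i_2j_2}\cong C_d$ encodes, or by some substitute), the conclusion that $H_{i_1j_1}$ admits an $F_{i_1j_1}$-lifting preserving a nonzero form of degree $d-1$ remains unestablished in your proposal.
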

	
	\begin{proof}
		We may assume $(i_1,j_1)=(1,1)$. We choose a basis of $\C^r$ compatible with the decomposition $\C^r=\oplus_{i,j}W_{ij}$. By the smoothness of $F$ and Lemma \ref{lem:smtosm}, we have $c\neq 0$. The statement (1) is a direct consequence of Lemmas \ref{lem:type2bound} and \ref{lem:classify}.
		
		For (2),  let $A={\rm diag}(\lambda_{11} I_{r_{11}},\ldots,\lambda_{1k_1}I_{r_{1k_1}},\lambda_{21}I_{r_{21}},\ldots,\lambda_{m1}I_{r_{m 1}},\ldots,\lambda_{mk_m}I_{r_{m k_m}})\in N$. By $A(F)=F$, we get $\lambda_{11}^{d-1}\lambda_{i_2 j_2}=\cdots =\lambda_{11}^{d-1}\lambda_{i_{c+1} j_{c+1}}=1$, where $\mathcal{A}=\{(i_2,j_2),\ldots,(i_{c+1},j_{c+1})\}$. Thus, $\lambda_{i_2 j_2}=\cdots=\lambda_{i_{c+1} j_{c+1}}$, which implies $|N|\le d^{s-(c-1)}$ by Theorem \ref{thm:diag}. From this, we obtain the inequalities in (2) using the associated exact sequences \eqref{eq:1associ} and \eqref{eq:2associ}.
		
		For (3), we define $W':=\oplus_{(i',j')\neq (1,1), (i_2,j_2)} W_{i'j'}$, where $\mathcal{A}=\{(i_2,j_2)\}$. Let $a:=r_{i_2 j_2}=1$ and $b:={\rm dim}(W')$. Since $r=r_{11}+a+b\ge (a+1)+a+b$, we have $2a+b\le r-1$. If the $S^d(W_{11}^*)$-component $F_{11}$ of $F$ is zero, then we have $$F\in \sum_{1\le d_1\le d}S^{d_1}(W_{i_2 j_2}^*)\otimes S^{d-d_1}(W_{11}^*\oplus W'^*)+ \sum_{2\le d_2\le d}S^{d_2}(W'^*)\otimes S^{d-d_2}(W_{11}^*\oplus W_{i_2 j_2}^*),$$
		which contradicts the smoothness of $F$ by \cite[Lemma 3.2]{OY19}. Therefore, $F_{11}\neq 0$. Since the $S^{d-1}(W_{11}^*)\otimes S^1(W_{i_2 j_2}^*)$-component $F_{11i_2j_2}$ of $F$ is not zero, we write  $F_{11i_2j_2}=F_{11}'\cdot L_{i_2 j_2}'$, where $F_{11}'\in S^{d-1}(W_{11}^*)$ and $L_{i_2 j_2}'\in S^1(W_{i_2 j_2}^*)$.  By $\mathcal{A}=\{(i_2,j_2)\}$, we have $W_{11}\oplus W_{i_2j_2}$ is ${\rm Stab}_G(W_{11})$-stable. Let $M:=\{(g_1,g_2)|\; g_1=g|W_{11}, g_2=g|W_{i_2 j_2}, g\in {\rm Stab}_G(W_{11})\}$. If $A=(\lambda_1 I_{r_{11}},\lambda_2)\in M$, then by $A(F)=F$, we have $\lambda_1^d=\lambda_1^{d-1}\lambda_2=1$ and $\lambda_1=\lambda_2$. Consider the surjective homomorphisms $\varphi_1: M\longrightarrow H_{11}$, $(g_1,g_2)\mapsto [g_1]$ and $\varphi_2: M\longrightarrow \widetilde{H}_{i_2 j_2}\cong C_d$, $(g_1,g_2)\mapsto g_2$. For any $(g_1, 1)\in {\rm Ker}(\varphi_2)$, we have $g_1$ preserves both $F_{11}$ and $F_{11i_2 j_2}$, which implies that $\varphi_1| {\rm Ker}(\varphi_2): {\rm Ker}(\varphi_2)\rightarrow H_{11}$ is injective. Clearly $|H_{11}|=|M|/d=|{\rm Ker}(\varphi_2)|$. Thus $\{ g_1 |\; (g_1, 1)\in {\rm Ker}(\varphi_2)\}\cong H_{11}$ is a desired $F_{11}$-lifting of $H_{11}$. This completes the proof of the lemma. 
	\end{proof}
	
	\section{Primitive groups of automorphisms}\label{ss:prim}
	Finite linear groups in small dimensions have been completely classified (see e.g. \cite{Fei70} for a survey).
	In particular, all finite primitive subgroups in $\PGL(r,\C)$ for small  $r$ are known. Based on this, we derive the following theorem which is the main result of this section.
	
	\begin{theorem}\label{thm:prim}
		Fix integers $n\geq1$, $d\geq3$. Let $F=F(x_1,\ldots,x_{n+2})$ be a smooth form of degree $d\ge 3$ with primitive $\Aut(F)$ and $|\Aut(F)|\ge d^{n+2}\cdot (n+2)!$. Then up to isomorphism, $F$ is as follows:
		\begin{footnotesize}
			\begin{table}[H]\rm
				\renewcommand\arraystretch{1.2}
				\begin{tabular}{p{0.3in}p{0.7in}p{0.6in}p{4.0in}}
					\hline
					$(n,d)$&${\rm Lin}(X_F)$&$|{\rm Lin}(X_F)|$&$F$\\
					\hline
					$(1,4)$&$\PSL(2,7)$&$168$&$x_1^3x_2+x_2^3x_3+x_3^3x_1$ \\
					$(1,6)$&$ C_3^2\rtimes \SL(2,3)$&$216$&$x_1^6 +x_2^6 + x_3^6-10(x_1^3x_2^3 + x_2^3x_3^3 + x_3^3x_1^3)$ \\
					$(1,6)$&$A_6$&$360$&$10x_1^3x_2^3+9(x_1^5+x_2^5)x_3-45x_1^2x_2^2x_3^2-135x_1x_2x_3^4+27x_3^6$ \\
					$(2,4)$&$C_2^4.S_5$&$1920$&$x_1^4 + x_2^4 + x_3^4 + x_4^4 + 12x_1x_2x_3x_4$ \\
					$(4,6)$&${\rm PSU}(4,3).C_2$&$6531840$&$\sum\limits_{1\leq i\leq6}x_i^6+\sum\limits_{1\leq i\neq j\leq6}15x_i^4x_j^2-\sum\limits_{1\leq i<j<k\leq6}30x_i^2x_j^2x_k^2+240\sqrt{-3}x_1x_2x_3x_4x_5x_6$ \\
					\hline
				\end{tabular}
			\end{table}
		\end{footnotesize}
	\end{theorem}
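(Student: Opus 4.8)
The plan is to first restrict the degree $r:=n+2$ to a short list, then reduce to a finite search over primitive linear groups, and finally analyze invariant forms case by case. Since $\Aut(F)$ is primitive, in the notation of Set-up~\ref{setup} we have $m=k_1=s=1$, the subdegree sequence of $\Aut(F)$ is $l(\Aut(F))=(r)$, and the unique primitive constituent $\pi(\Aut(F))$ lies in $\PGL(r,\C)$. The hypothesis $|\Aut(F)|\ge d^{n+2}(n+2)!=d^r\cdot r!$ yields $R(F)=R((r),d)\ge 1$ by Lemma~\ref{lem:FTRandGF}. The final assertion of Lemma~\ref{lem:addr0}, applied to the exponential type $r^1$, shows $R((r),d)<1$ whenever $r\notin\{2,3,4,5,6,8\}$; as $r\ge 3$ this forces $r\in\{3,4,5,6,8\}$. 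For each such $r$ the ratio $R((r),d)$ is non-increasing in $d$ (Lemma~\ref{lem:ratioproperties}), so $R((r),d)\ge 1$ caps $d$ from above; reading off the tabulated values of $\JC(r)$ leaves only the pairs with $d\le 7,10,3,6,3$ for $r=3,4,5,6,8$ respectively, a finite list.

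Next I would translate the order hypothesis into a lower bound on the constituent. The scalar matrices in $\Aut(F)$ are exactly $\mathcal{I}_{r,d}$, of order $d$, so $|\Aut(F)|=d\cdot|\pi(\Aut(F))|$ and the hypothesis reads $|\pi(\Aut(F))|\ge d^{r-1}\cdot r!$. Hence $\pi(\Aut(F))$ is a finite primitive subgroup of $\PGL(r,\C)$ of order at least $d^{r-1}r!$ admitting an $F$-lifting. For each pair $(r,d)$ above I would feed this bound into the classification of finite primitive subgroups of $\PGL(r,\C)$ for $r\le 8$ recalled in Section~\ref{ss:prim}; since $d^{r-1}r!$ is a large fraction of $\JC(r)$, only a handful of candidate groups survive per pair. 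I would also use that primitivity passes to overgroups---if a primitive $H_0$ is contained in $\pi(\Aut(F))$, then $\pi(\Aut(F))$ is again primitive---so whenever a candidate preserves a smooth form I may pass to the full group $\pi(\Aut(F))$ without leaving the primitive setting, which keeps the candidate list closed.

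For each surviving candidate $H$ the remaining step is invariant-theoretic. Using explicit generators for an $F$-lifting $\widetilde H\subset\GL(r,\C)$, I would compute the space of degree-$d$ forms fixed by $\widetilde H$, decide whether it contains a smooth member, and for each smooth invariant $F$ determine $\Aut(F)$ and verify $|\Aut(F)|\ge d^r\cdot r!$. A candidate is discarded when its degree-$d$ invariants are all singular (or too few to contain a smooth form) and recorded otherwise. This should reproduce exactly the five rows of the table: the Klein quartic with $\PSL(2,7)$ for $(1,4)$; the two sextics---the Wiman sextic with $A_6$ and the Hessian sextic with $C_3^2\rtimes\SL(2,3)$---for $(1,6)$; the quartic $x_1^4+\cdots+x_4^4+12x_1x_2x_3x_4$ with $C_2^4.S_5$ for $(2,4)$; and Todd's sextic with ${\rm PSU}(4,3).C_2$ for $(4,6)$. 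Every remaining pair, in particular all of $r=5$ and $r=8$ and the leftover cubic cases, yields no smooth primitive invariant meeting the bound. For $r=3$ this last step can be shortened by invoking the classical classification of automorphism groups of smooth plane curves cited in the introduction.

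The main obstacle is precisely this invariant-theoretic step. The delicate points are: (i) determining, for each candidate, the degree-$d$ invariants from explicit generators, a separate computation for each primitive group; (ii) certifying smoothness or its failure, since a generic invariant may be smooth while special members degenerate, and one must also eliminate candidates all of whose degree-$d$ invariants are singular; and (iii) computing the full group $\Aut(F)$ of a smooth invariant, which can strictly contain the group $H$ one began with, so that matching the orders in the table requires identifying $\Aut(F)$ exactly. The upward-primitivity remark ensures this larger group is itself primitive and therefore already among the candidates, so that the entire analysis stays finite and closes.
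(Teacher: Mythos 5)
Your first paragraph is correct and is essentially the paper's opening step: primitivity gives $|\Aut(F)|=d\cdot|\pi(\Aut(F))|$ with $\pi(\Aut(F))$ a primitive subgroup of $\PGL(n+2,\C)$, and the inequality $d^{n+1}(n+2)!\le|\pi(\Aut(F))|\le \JC(n+2)$ cuts the problem down to exactly the finite list of pairs the paper then works through ($r=3$, $d\le 7$; $r=4$, $d\le 10$; $r=5$, $d=3$; $r=6$, $d\le 6$; $r=8$, $d=3$). The gap is in your second paragraph, and it is not a technicality: you assert that $\pi(\Aut(F))$ ``admits an $F$-lifting,'' and your whole invariant-theoretic step (``using explicit generators for an $F$-lifting $\widetilde H\subset\GL(r,\C)$, compute the space of degree-$d$ forms fixed by $\widetilde H$'') rests on that assertion. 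It is false in general: $\Aut(F)$ is a central extension $1\to\mathcal{I}_{r,d}\to\Aut(F)\to H\to 1$ that need not contain any copy of $H$, and for some of the hardest candidates no subgroup of $\GL(r,\C)$ isomorphic to $H$ and projecting onto $H$ exists at all. Concretely, for $(n,d)=(2,6)$ the candidate containing $A_5^2\subset\PGL(4,\C)$ arises from the tensor representation of $\SL(2,5)\times\SL(2,5)$, and $A_5^2$ has no faithful $4$-dimensional linear representation whatsoever; your procedure has nothing to compute with there and hence can neither confirm nor eliminate this candidate. A secondary instance of the same problem: even when an abstract lifting $\widetilde H\cong H$ does exist, a form preserved projectively by $H$ is in general only a semi-invariant of $\widetilde H$ (fixed up to a character), so the space of forms literally fixed by $\widetilde H$ is the wrong search space for non-perfect $H$.

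The paper's proof is organized precisely around this liftability obstruction, and this is what your proposal is missing. For the pairs with $\gcd(n+2,d)=1$, namely $(2,5),(2,7),(2,9),(4,5),(6,3)$, it invokes \cite[Theorem 3.5]{GLM23} to produce a genuine lifting and then eliminates every candidate by purely group-theoretic constraints (Lemma \ref{lem:primlifting}: a faithful degree-$r$ linear representation, no nontrivial solvable normal subgroup, no subgroup $C_p^r$), with no invariant computation needed. For the non-coprime cases $(2,6),(2,8),(2,10),(4,4)$ it replaces liftings by Schur covers of perfect subgroups: Lemma \ref{lem:perfect} shows that for a perfect $H_1\subseteq H$ the relevant representation of $SC(H_1)$ lands in $\Aut(F)$ with no character twist (perfectness is what kills the character, resolving the semi-invariant issue), and candidates are then excluded either by a trace obstruction --- an element of order $5$ with trace $-1$ is incompatible with smoothness when $(r,d)=(4,6)$ --- or by the Molien series of the cover. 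Your plan needs this machinery, or an equivalent that works with all central extensions $C_e.H$ rather than with $H$ itself; without it the case analysis cannot close, and the cases you dismiss as ``yielding no smooth primitive invariant'' are exactly the ones where the lifting assumption breaks down.
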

	
	Besides the values of ${\rm JC}(r)$, our proof of Theorem \ref{thm:prim} relies on explicit list of the finite primitive subgroups in $\PGL(r,\C)$ for $r\in\{2,3,4,5,6,8\}$, 
	see Blichfeldt \cite{Bli17} for $r\le 4$ (see also \cite[Appendix A]{CS19} for diagrams indicating inclusions between groups in the case $r=4$); Brauer \cite[\S 9]{Bra67} for $r=5$; Lindsey \cite[\S 3]{Lin71} for $r=6$; Feit \cite[\S 2]{Fei76} for $r=8$. For ease of reference, we present a list of finite primitive groups $H$ in $\PGL(r,\C)$ with $|H|\ge 3^{r-1}r!$ and $r\in\{2,3,4,5,6\}$ (see Table \ref{tab:listofprim}).
	\begin{footnotesize}
		\begin{table}[H]\rm
			\renewcommand\arraystretch{1.1}
			\begin{tabular}{ccc|ccc|ccc|ccc}
				$r$&$|H|$&$H$&$r$&$|H|$&$H$&$r$&$|H|$&$H$&$r$&$|H|$&$H$\\
				\hline
				2&12&$A_4$&
				3&360&$A_6$&
				4&1920&$C_2^4.S_5$&
				
				4&25920&$ \rm PSP(4,3)$\\
				2&24&$S_4$&
				4&720&$A_4\times A_5$&
				4&1920&$C_2^4.S_5 $&
				
				5&25920&$\rm PSP(4,3)$\\
				2&60&$A_5$&
				4&720&$S_6$&
				4&2520&$A_7$&
				6&604800&$\rm HaJ$
				\\
				3&60&$A_5$&
				4&960&$C_2^4\rtimes A_5 $&
				4&3600&$A_5^2$&
				6&3265920&$\rm PSU(4,3) $\\
				3&72&$C_3^2\rtimes Q_8$&
				4&960&$C_2^4\rtimes A_5 $&
				4&5760&$C_2^4\rtimes A_6 $&
				
				6&6531840&${\rm PSU}(4,3).C_2$\\
				3&168&$\PSL(2,7)$&
				4&1152&$S_4^2\rtimes C_2$&
				4&7200&$A_5^2\rtimes C_2 $&
				
				&&\\
				3&216&$C_3^2\rtimes \SL(2,3)$&
				4&1440&$S_4\times A_5$&
				4&11520&$C_2^4.S_6$&
				&&\\
			\end{tabular}
			\caption{Large finite primitive groups $H\subset\PGL(r,\C)$}\label{tab:listofprim}
		\end{table}
	\end{footnotesize}

	Before presenting the proof of the theorem, we first give two lemmas.
	
	\begin{lemma}\label{lem:primlifting}
		Let $\widetilde{H}\subset \GL(r,\C)$ be a finite primitive subgroup and $H:=\pi(\widetilde{H})$. If $\widetilde{H}\cong H$, then the following statements hold:
		\begin{enumerate}
			\item[(1)] $H$ admits a faithful linear representation of degree $r$;
			
			\item[(2)] $H$ has no non-trivial solvable normal subgroups;
			
			\item[(3)] For each prime $p$, $H$ has no subgroups isomorphic to $C_p^r$.
		\end{enumerate}
		
	\end{lemma}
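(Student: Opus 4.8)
The plan is to first unpack the hypothesis into a single concrete condition and then dispatch the three statements in turn. Since $\pi|_{\widetilde{H}}\colon \widetilde{H}\to H$ is surjective and both groups are finite, the abstract isomorphism $\widetilde{H}\cong H$ forces $|\widetilde{H}|=|H|$, so $\pi|_{\widetilde{H}}$ is in fact an isomorphism. Hence its kernel $\widetilde{H}\cap Z(\GL(r,\C))$ is trivial; equivalently, $\widetilde{H}$ contains no non-trivial scalar matrix. I would also record at the outset that primitivity implies irreducibility, so by Schur's lemma $Z(\widetilde{H})\subseteq\{\lambda I_r\mid\lambda\in\C^\times\}=Z(\GL(r,\C))$, which combined with the previous sentence gives $Z(\widetilde{H})=\{I_r\}$. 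Statement (1) is then immediate: the inclusion $\widetilde{H}\hookrightarrow\GL(r,\C)$ is a faithful linear representation of degree $r$, and transporting it across the isomorphism $H\cong\widetilde{H}$ yields a faithful degree-$r$ representation of $H$.

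For (2), I would argue by contradiction. A non-trivial solvable normal subgroup of $H$ corresponds, under the isomorphism, to a non-trivial solvable normal subgroup $\widetilde{S}\vartriangleleft\widetilde{H}$. The last non-trivial term $A$ of the derived series of $\widetilde{S}$ is abelian, non-trivial, and characteristic in $\widetilde{S}$, hence normal in $\widetilde{H}$. Now I invoke the fact recalled just before \textbf{Set-up} \ref{setup} (from \cite[Lemma 1]{Col08}) that a normal abelian subgroup of a finite primitive linear group lies in its centre: thus $A\subseteq Z(\widetilde{H})=\{I_r\}$, contradicting $A\neq\{I_r\}$.

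For (3), suppose $H$ contained a copy of $C_p^r$ and transport it to a subgroup $\widetilde{K}\cong C_p^r$ of $\widetilde{H}\subseteq\GL(r,\C)$. Being a finite abelian group of matrices, $\widetilde{K}$ consists of commuting diagonalizable elements and so is simultaneously diagonalizable; replacing $\widetilde{H}$ by a conjugate (which preserves both primitivity and the no-scalar condition, since conjugation fixes $Z(\GL(r,\C))$ setwise) I may assume $\widetilde{K}$ is diagonal. Every element of $\widetilde{K}$ then has order dividing $p$, so $\widetilde{K}$ lies inside the group $D$ of diagonal matrices whose entries are $p$-th roots of unity; but $|D|=p^r=|\widetilde{K}|$, whence $\widetilde{K}=D$. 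In particular $\widetilde{K}$ contains the scalar matrix $\xi_p I_r$, a non-trivial scalar, contradicting that $\widetilde{H}$ has no non-trivial scalar matrix.

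The argument is essentially bookkeeping resting on two structural inputs, namely Schur's lemma and \cite[Lemma 1]{Col08}. The only place demanding genuine care is (3), where one must combine simultaneous diagonalization with the exact order count $|D|=p^r$ to force $\widetilde{K}=D$ and thereby extract a forbidden scalar, while also checking that passing to a conjugate disturbs none of the standing hypotheses. I expect this counting-plus-conjugation step to be the main, though still modest, obstacle.
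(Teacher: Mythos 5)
Your proof is correct and follows essentially the same route as the paper's: deduce $Z(\widetilde{H})=\{I_r\}$ (equivalently, no non-trivial scalars) from $\widetilde{H}\cong H$, prove (2) via a characteristic abelian subgroup of the solvable normal subgroup together with the fact that normal abelian subgroups of primitive groups are central, and prove (3) by simultaneous diagonalization plus the order count forcing the copy of $C_p^r$ to contain the scalar $\xi_p I_r$. The only difference is that you spell out details the paper leaves implicit (the kernel computation, the derived-series construction, and the conjugation/counting step in (3)).
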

	
	\begin{proof}
		(1) is clear. By $\widetilde{H}\cong \pi(\widetilde{H})$, we have $|Z(\widetilde{H})|=1$. For (2), we suppose $N$ is a non-trivial solvable normal subgroup of $\widetilde{H}$.  Since $N$ is solvable, $N$ has a non-trivial characteristic abelian subgroup, say $N_1$. Then $N_1$ is a normal abelian subgroup of $\widetilde{H}$, which implies $N_1\subset Z(\widetilde{H})$ by primitivity of $\widetilde{H}$, a contradiction. Consequently, the statement (2) is true. For (3), we suppose that $\widetilde{H}$ has a subgroup, say $H_1$, isomorphic to $C_p^r$ for some prime $p$. Since $H_1\subset \GL(r,\C)$, we infer that $A:={\rm diag}(\xi_p,\ldots,\xi_p)\in H_1$. Thus, $A\in Z(\widetilde{H})$, a contradiction. Hence (3) is true.
	\end{proof}
	
	By a theorem of Schur, for a finite group $G$, there is a finite group $SC(G)$ such that $SC(G)$ is a central extension of the Schur multiplier $H^2(G,\C^\times)$ by $G$ and every projective representation of $G$ can be lifted to $SC(G)$ (see \cite[Chapter 7]{Rot95}). We call $SC(G)$ a {\it Schur cover group} of $G$. If $G$ is perfect (i.e., $G$ is equal to its commutator subgroup), then $SC(G)$ is perfect and unique up to isomorphism (see \cite[Corollary 11.12]{Rot95}).
	
	\begin{lemma}\label{lem:perfect}
		Let $H\subset \PGL(r,\C)$ be a finite perfect subgroup preserving a form $F=F(x_1,\ldots,x_r)$ of degree $d\ge 3$. Then the Schur cover group $SC(H)$ of $H$ admits a linear representation $\rho: SC(H)\rightarrow \GL(r,\C)$ such that $\rho (SC(H))\subset \Aut(F)$ and $\pi (\rho(SC(H)))=H$. Moreover, if $(r,d)=(4,6)$ and $F$ is smooth, then $\rho(SC(H))$ has no elements $A$ of order $5$ with ${\rm tr}(A)=-1$.
	\end{lemma}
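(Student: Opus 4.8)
The plan is to treat the two assertions separately. For the first, I would start from the hypothesis that $H$ preserves $F$: by definition there is a subgroup $G\subset\Aut(F)$ with $\pi(G)=H$. First I would check that $G$ is finite and that $\pi|_G\colon G\to H$ is a central extension. Finiteness is immediate, since a scalar $\lambda I_r$ lies in $\Aut(F)$ only if $\lambda^d=1$ (as $F$ has degree $d$), so $\Ker(\pi|_G)=G\cap\{\lambda I_r\}\subset\mathcal{I}_{r,d}$ has order at most $d$ and $|G|\le d\,|H|$; centrality holds because this kernel consists of scalar matrices, which are central in $\GL(r,\C)$ and hence in $G$. Because $H$ is perfect, its Schur cover $SC(H)$ is the universal central extension of $H$ (see \cite{Rot95}), so the universal property produces a homomorphism $\mu\colon SC(H)\to G$ lying over the projection $SC(H)\twoheadrightarrow H$. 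Composing with the inclusion $G\hookrightarrow\GL(r,\C)$ gives $\rho\colon SC(H)\to\GL(r,\C)$; by construction $\rho(SC(H))\subset G\subset\Aut(F)$, and $\pi\circ\rho$ is the composite $SC(H)\twoheadrightarrow H\hookrightarrow\PGL(r,\C)$, whence $\pi(\rho(SC(H)))=H$, as required.

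For the moreover statement, suppose toward a contradiction that $A\in\rho(SC(H))\subset\Aut(F)$ has order $5$ with ${\rm tr}(A)=-1$, where now $(r,d)=(4,6)$ and $F$ is smooth. The first step is to pin down the eigenvalues of $A$. These are fifth roots of unity, say with multiplicities $a_0,\dots,a_4$ of $1,\xi_5,\dots,\xi_5^4$ summing to $4$; writing the trace in the $\Q$-basis $\{1,\xi_5,\xi_5^2,\xi_5^3\}$ of $\Q(\xi_5)$ and using $1+\xi_5+\cdots+\xi_5^4=0$, the condition ${\rm tr}(A)=-1\in\Q$ forces $a_1=a_2=a_3=a_4$ and $a_0=a_4-1$, whence $a_0=0$ and $a_1=\cdots=a_4=1$. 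Thus, after a linear change of coordinates diagonalizing $A$, we may assume $A=\Diag(\xi_5^{a_1},\xi_5^{a_2},\xi_5^{a_3},\xi_5^{a_4})$ with $\{a_1,a_2,a_3,a_4\}=\{1,2,3,4\}$.

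The final step is to derive a contradiction with smoothness. Under this $A$, a degree-$6$ monomial $x_1^{e_1}x_2^{e_2}x_3^{e_3}x_4^{e_4}$ is fixed precisely when $\sum_i a_i e_i\equiv0\pmod 5$, so $A(F)=F$ forces every monomial of $F$ to satisfy this congruence. At the coordinate point $p=(1:0:0:0)$, the only degree-$6$ monomials whose partial derivatives do not all vanish are $x_1^6$ and $x_1^5 x_j$ with $j\neq1$; but their weights are $6a_1\equiv a_1$ and $5a_1+a_j\equiv a_j$ modulo $5$, and since $a_1,a_j\in\{1,2,3,4\}$ neither is $\equiv0$. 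Hence none of these monomials occurs in $F$, so all $\partial F/\partial x_i$ vanish at $p$, i.e. $p$ is a singular point of $X_F$, contradicting smoothness (compare \cite[Proposition 3.3]{OY19}). Therefore no such $A$ exists.

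I expect the only conceptually delicate point to be in the first paragraph: producing a lift $\rho$ that lands inside the \emph{prescribed} group $\Aut(F)$ rather than merely in $\GL(r,\C)$. This is exactly where perfectness of $H$ enters, allowing the identification of $SC(H)$ with the universal central extension and the factorization $\mu\colon SC(H)\to G\subset\Aut(F)$; the bare lifting property of $SC(H)$ would not by itself locate the lift in $\Aut(F)$. By contrast, the moreover part is short and essentially forced: once one observes that for $(r,d)=(4,6)$ the condition ${\rm tr}(A)=-1$ is equivalent to $A$ having all four primitive fifth roots as eigenvalues, the weight computation modulo $5$ simultaneously kills every monomial $x_i^6$ and $x_i^5 x_j$, removing every candidate term that could desingularize a coordinate point.
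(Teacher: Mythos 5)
Your proof is correct, but your route to the main assertion is genuinely different from the paper's. The paper starts from the defining (Schur) lifting property of $SC(H)$: the projective representation $H\subset\PGL(r,\C)$ lifts to a linear representation $\rho: SC(H)\to\GL(r,\C)$ with $\pi(\rho(SC(H)))=H$; since each $\rho(g)$ covers an element of $H$, which preserves $F$ up to scalar, one gets $\rho(g)(F)=\chi(g)F$ for a character $\chi$ of $SC(H)$, and perfectness of $SC(H)$ (inherited from $H$) forces $\chi$ to be trivial, so the lift automatically lands in $\Aut(F)$. You instead work with a concrete subgroup $G\subset\Aut(F)$ with $\pi(G)=H$ (which exists by the paper's definition of ``preserves''), observe that $\pi|_G: G\to H$ is a central extension with scalar kernel contained in $\mathcal{I}_{r,d}$, and invoke the universal property of the universal central extension --- using the standard identification, valid for perfect $H$, of the Schur cover with the universal central extension --- to produce $\mu: SC(H)\to G$ over $H$ and set $\rho=\iota\circ\mu$. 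Both arguments hinge on perfectness, but at different points: the paper uses that a perfect group admits no nontrivial characters, while you use that its Schur cover maps to every central extension of $H$. Your version obtains the containment $\rho(SC(H))\subset\Aut(F)$ for free from the universal property, at the cost of importing the identification from \cite{Rot95}; the paper's version is self-contained given its stated definition of $SC(H)$ but needs the small character computation. For the ``moreover'' statement your argument coincides with the paper's: the paper simply writes $A=\Diag(\xi_5,\xi_5^2,\xi_5^3,\xi_5^4)$ and notes that $A(F)=F$ excludes all monomials $x_1^5x_i$, contradicting smoothness, whereas you additionally justify --- correctly, via the $\Q$-linear independence argument in $\Q(\xi_5)$ --- why order $5$ together with ${\rm tr}(A)=-1$ forces the eigenvalues to be exactly the four primitive fifth roots of unity, a step the paper leaves implicit.
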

	
	\begin{proof}
		The group $SC(H)$ has a representation $\rho: SC(H)\rightarrow \GL(r,\C)$ such that $\pi (\rho(SC(H)))=H$. Since $H$ preserves $F$, we get a character $\chi:SC(H)\rightarrow \C^\times$ satisfying $\rho(g)(F)=\chi(g)F$ for all $g\in SC(H)$. On the other hand, $SC(H)$ is perfect since $H$ is perfect, which implies $\chi$ is trivial. Thus, $\rho (SC(H))$ preserves $F$. For the last statement in the lemma, we suppose $A={\rm diag}(\xi_5,\xi_5^2,\xi_5^3,\xi_5^4)\in \Aut(F)$. Then by $A(F)=F$, we have $x_1^5x_i\notin F$ for any $1\le i\le 4$, which contradicts smoothness of $F$.
	\end{proof}
	We are now ready to prove the main result of this section.
	\begin{proof}[Proof of Theorem \ref{thm:prim}] 
		The  primitivity of $\Aut(F)$ implies the only primitive constituent $H$ of $\Aut(F)$ is $\pi(\Aut(F))\cong \Aut(F)/Z(\Aut(F))$, where $Z(\Aut(F))\cong C_d$. Hence, we have
		$|\Aut(F)|=B(\Aut(F))=d\cdot |H|$ and $|H|\geq d^{n+1}(n+2)!$. Since $|H|\leq{\rm JC}(n+2)$, it suffices to consider the cases where $(n,d)$ is one of the following: $(1,3)$, $(1,4)$, $(1,5)$, $(1,6)$, $(1,7)$, $(2,3)$, $(2,4)$, $(2,5)$, $(2,6)$, $(2,7)$, $(2,8)$, $(2,9)$, $(2,10)$, $(3,3)$, $(4,3)$, $(4,4)$, $(4,5)$, $(4,6)$, $(6,3)$.

		For $(n,d)$ equal to $(1,4)$, $(1,5)$, $(1,6)$, and $(1,7)$, the results are already known, see \cite{Pam13} and \cite{Haru19}; for $(n,d)=(1,6)$ and $|{\rm Lin}(X_F)|=216$, see also \cite{BB22}; for $(n,d)=(1,3)$, see e.g. \cite[Lemma 3.12]{YYZ23}. For $2\le n\le 4$, Fermat cubic $n$-fold has the largest possible order for the automorphism group among all smooth cubic $n$-folds (see \cite{Seg42}, \cite{Hos97}, \cite{Do12} for $n=2$; \cite{WY20} for $n=3$; \cite{LZ22}, \cite{YYZ23} for $n=4$). For the case $(n,d)=(2,4)$, $F$ with primitive ${\rm Lin}(X_F)$ of order $\ge 4^3\cdot 4!=1536$ is unique (up to isomorphism) and $|{\rm Lin}(X_F)|=1920$ (see \cite[Exercise 6, Chap. XVII]{Bur55}, \cite[Theorem A and Corollary B]{AOT24}).
		
		Cases $(n,d)=(2,5), (2,7), (2,9), (4,5), (6,3)$. Since $n+2$ and $d$ are coprime, by \cite[Theorem 3.5]{GLM23}, there is a (primitive) subgroup $\widetilde{H}\subset \Aut(F)$ with $\widetilde{H}\cong \pi(\widetilde{H})=H$. Thus, $H$ satisfies Lemma \ref{lem:primlifting} (1)-(3). However, no such $H$ exists by Table \ref{tab:listofprim} and \cite[Theorem A]{Fei76}.
		
		For $(n,d)=(2,6)$, we may assume $|H|\geq 6^3\cdot4!=5184$. We then need to consider $H$ to be one of the following groups: $A_5^2\rtimes C_2$, $C_2^4\rtimes A_6$, $C_2^4.S_6$, ${\rm PSP}(4,3).$ Suppose $H\cong A_5^2\rtimes C_2$. Then $H$ has a perfect subgroup $H_1\cong A_5^2$. The Schur cover group of $A_5$ (resp. $A_5^2$) is $\SL(2,5)$ (resp. $\SL(2,5)^2$) of order $120$ (resp. $14400$). From the character table of $\SL(2,5)$ (see \cite[Page 2]{CCNPW}), we infer that for each irreducible representation $\rho: \SL(2,5)^2\rightarrow \GL(4,\C)$, there is $A\in\rho(\SL(2,5)^2)$ with $A$ of order $5$ and ${\rm tr}(A)=-1$, which contradicts Lemma \ref{lem:perfect}. Thus, $H\ncong A_5^2\rtimes C_2$. Similarly, we have $H\ncong C_2^4\rtimes A_6$, $C_2^4.S_6$, ${\rm PSP}(4,3)$ by choosing $H_1$ as $A_6$, $A_6$, ${\rm PSP}(4,3)$ respectively.
		
		For $(n,d)=(2,8),(2,10)$, we may assume $|H|\geq 8^3\cdot4!=12288$. Thus we only need to consider $H\cong {\rm PSP}(4,3)$, which preserves no forms of degree $d=8,10$ by the Molien's series of its Schur cover (see \cite[Page 287]{ST54}), a contradiction by Lemma \ref{lem:perfect}.
		
		For $(n,d)=(4,4)$,  since $|H|\geq 4^5\cdot6!=737280$, we have $H\cong {\rm PSU}(4,3)$ or $H\cong {\rm PSU}(4,3).C_2$. As in the previous cases, these two groups can be ruled out by the Molien's series of the Schur cover of ${\rm PSU}(4,3)$ (see \cite[Page 86]{Tod50}) and the fact ${\rm PSU}(4,3)<{\rm PSU}(4,3).C_2$.
		
		For $(n,d)=(4,6)$, there is a unique form $F=F(x_1,\ldots,x_6)$ of degree $6$ with primitive $\pi (\Aut(F))\cong {\rm PSU}(4,3).C_2$ of order $6531840$ (\cite[\S 3, \S 4]{Tod50}). Smoothness of $F$ can be verified by Magma (\cite{BCP97}).
	\end{proof}
	
	Forms preserved by finite primitive groups in $\PGL(2,\C)$ are known (see \cite[\S 105]{MBD16}).
	
	\begin{lemma}\label{lem:A4S4A5}
		Let $H\subset \PGL(2,\C)$ be a finite primitive group preserving a form $F=F(x_1,x_2)$ of degree $d\ge 1$. If $H= A_4$ (resp. $S_4$, resp. $A_5$), then $d\ge 4$ (resp. $d\ge 6$, resp. $d=12$ or $d\ge 20$). Moreover, if $(H,d)=(A_4,4)$ (resp. $(S_4,6)$, resp. $(A_5, 12)$), then $F$ is isomorphic to the smooth form $x_1^4+2\sqrt{-3}x_1^2x_2^2+x_2^4$ (resp. $x_1^5x_2+x_2^5x_1$, resp. $x_1^{11}x_2+11x_1^6x_2^6-x_1x_2^{11}$).
	\end{lemma}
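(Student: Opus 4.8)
The plan is to reduce the whole statement to the combinatorics of the orbits of $H$ on $\P^1$. First I would observe that if $H$ preserves $F$, i.e.\ $F$ is fixed by a lift $G\subset\GL(2,\C)$ with $\pi(G)=H$, then the zero divisor $\mathrm{div}(F)$ on $\P^1$ is $H$-invariant. If a point $p$ occurs in $\mathrm{div}(F)$ with multiplicity $m_p$, then by invariance every point of the orbit $Hp$ occurs with the same multiplicity, so $\mathrm{div}(F)=\sum_{O}m_O\cdot O$ is a non-negative integral combination of full $H$-orbits $O$. Consequently $d=\deg F=\sum_O m_O\,|O|$ lies in the numerical semigroup generated by the sizes of the $H$-orbits, and every degree bound and gap in the statement will follow once these orbit sizes are determined.

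Next I would compute the orbit sizes. A point stabilizer of a finite subgroup of $\PGL(2,\C)$ acting on $\P^1$ is cyclic, and a non-generic orbit has size $|H|/k$ where $k$ is the order of the (maximal cyclic) stabilizer; in particular $H$ has no fixed point, since no cyclic subgroup equals $H$. For $H=A_4$ the cyclic subgroups have orders $1,2,3$, giving orbit sizes $12,6,4$, so the minimal orbit has size $4$ and $d\ge4$. For $H=S_4$ the orders $1,2,3,4$ give orbit sizes $24,12,8,6$, so $d\ge6$. For $H=A_5$ the orders $1,2,3,5$ give orbit sizes $60,30,20,12$; since the numerical semigroup $\langle 12,20,30\rangle$ meets $\{1,\dots,19\}$ only in $\{12\}$, any invariant $F$ has $d=12$ or $d\ge20$.

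To pin down the minimal-degree forms, I would note that when $d$ equals the minimal orbit size the divisor $\mathrm{div}(F)$ must consist of a single minimal orbit taken with multiplicity one, since any higher multiplicity or any larger orbit would increase the degree. Hence $F$ is reduced with $d$ distinct roots, so $X_F$ is smooth, and its roots form a regular tetrahedral, octahedral, or icosahedral configuration. For $S_4$ and $A_5$ the minimal orbit is unique, so $F$ is determined up to scalar once the embedding $H\subset\PGL(2,\C)$ is fixed; for $A_4$ there are two orbits of size $4$ (the vertices and face-centres of the tetrahedron), but both are equianharmonic quadruples and hence projectively equivalent. Since all subgroups of $\PGL(2,\C)$ of a fixed polyhedral type are conjugate and all minimal orbit configurations of that type are $\PGL(2,\C)$-equivalent, $F$ is unique up to isomorphism. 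Finally I would match this reduced form to the explicit polynomial by inspecting its roots: e.g.\ $x_1^5x_2+x_1x_2^5=x_1x_2(x_1^4+x_2^4)$ vanishes on the octahedral set $\{[1:0],[0:1]\}\cup\{[\xi_8^{\,j}:1]: j\text{ odd}\}$, and $x_1^{11}x_2+11x_1^6x_2^6-x_1x_2^{11}=x_1x_2(x_1^{10}+11x_1^5x_2^5-x_2^{10})$ vanishes on Klein's icosahedral $12$ points; each is visibly reduced, hence smooth.

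The degree bounds and the $A_5$-gap are immediate from the orbit sizes, so the real content lies in the uniqueness up to isomorphism at the minimal degree. The main obstacle is the rigidity of the minimal configurations: showing that every $A_4$-, $S_4$-, or $A_5$-orbit of minimal size is projectively equivalent to the fixed normal configuration (and that the two $A_4$-orbits of size $4$ agree up to isomorphism), and then identifying the resulting reduced form with the stated polynomial. This is classical; I would either invoke the explicit invariant forms of \cite[\S105]{MBD16} or argue via the fixed cross-ratio ($j=0$ for the equianharmonic tetrahedral quadruple) together with the standard normal forms of the polyhedral orbits.
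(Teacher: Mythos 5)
Your proposal is correct, but it is a genuinely different treatment from the paper's: the paper gives no argument for this lemma at all, merely prefacing it with the remark that forms preserved by finite primitive groups in $\PGL(2,\C)$ are known and citing \cite[\S 105]{MBD16}, i.e., the classical theory of the ground-forms of the binary polyhedral groups. You instead reconstruct that classical theory: the $H$-invariance of $\mathrm{div}(F)$ forces $d$ to lie in the numerical semigroup generated by the orbit sizes, and the orbit sizes follow from orbit--stabilizer together with cyclicity of point stabilizers in $\PGL(2,\C)$ (stabilizer orders $\le 3$, $\le 4$, and $\in\{1,2,3,5\}$ for $A_4$, $S_4$, $A_5$ respectively), which cleanly yields $d\ge 4$, $d\ge 6$, and the gap ``$d=12$ or $d\ge 20$'' since $\langle 12,20,30\rangle\cap\{1,\dots,19\}=\{12\}$. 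At the minimal degree the divisor must be a single reduced minimal orbit, giving smoothness for free, and uniqueness up to isomorphism then rests on three classical rigidity facts that you correctly identify but do not prove: uniqueness of the size-$6$ (resp.\ size-$12$) orbit for $S_4$ (resp.\ $A_5$), projective equivalence of the two tetrahedral $4$-point orbits (both equianharmonic, $j=0$), and conjugacy in $\PGL(2,\C)$ of all subgroups of a given polyhedral type. These can be supplied by short counting arguments (e.g., the points of $\P^1$ with stabilizer of order $5$ under $A_5$ number at most $12$, so they form a single orbit) or by the very reference the paper cites, so deferring to \cite[\S 105]{MBD16} at that step is no weaker than what the paper itself does. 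What your route buys is a self-contained and transparent derivation of the degree bounds---in particular of the otherwise mysterious-looking condition $d=12$ or $d\ge 20$---while the paper's bare citation buys brevity and offloads both the bounds and the identification of the explicit polynomials to the classical literature.
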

	
	\section{Proof of the main theorem}\label{ss:proof}
	 In this section, using Theorem \ref{thm:prim} and the refined bounds in Section \ref{ss:refinedbounds}, we classify smooth forms with large imprimitive or reducible automorphism groups in \S \ref{ss:7.1} and \S \ref{ss:7.2} (Theorems \ref{thm:imprim} and \ref{thm:reducible}). In \S \ref{ss:7.3}, combining these results, we prove a stronger form (Theorem \ref{thm:atleastFermat}) of Theorem \ref{thm:Main}.
	
	For a smooth form $F$ of degree $d\ge 3$ in $r\ge 1$ variables, we define $\mathfrak{R}_F:=|\Aut(F)|/(d^r\cdot r!)$.
	
	\subsection{Imprimitive $\Aut(F)$}\label{ss:7.1} Recall that by an imprimitive finite linear group we mean that it is irreducible and not primitive. Now we classify smooth forms with large imprimitive automorphism groups.
	\begin{theorem}\label{thm:imprim}
		Let $F=F(x_1,\ldots,x_{n+2})$ be a smooth form of degree $d$, where $n\geq 1$, $d\geq3$. Suppose $\Aut(F)$ is imprimitive and $F$ is not isomorphic to Fermat form $F_d^n$. If $|\Aut(F)|\ge d^{n+2}\cdot (n+2)!$, then up to isomorphism, $F$ is as follows:
		\begin{footnotesize}
			\begin{table}[H]\rm
				\renewcommand\arraystretch{1.2}
				\begin{tabular}{p{0.3in}p{0.76in}p{0.6in}p{4.0in}}
					\hline
					$(n,d)$&$\Aut(F)$&$|\Aut(F)|$&$F$\\
					\hline
					$(2,6)$&$ C_{6}^2.(S_4^2\rtimes C_2)$&$41472$&$x_1^5x_2+x_2^5x_1+x_3^5x_4+x_4^5x_3$ \\
					$(2,12)$&$ C_{12}^2.(A_5^2\rtimes C_2) $&$ 1036800$&$x_1^{11}x_2+11x_1^6x_2^6-x_1x_2^{11}+x_3^{11}x_4+11x_3^6x_4^6-x_3x_4^{11}$ \\
					$(4,12)$&$ C_{12}^3.(A_5^3\rtimes S_3) $&$2239488000$&$x_1^{11}x_2+11x_1^6x_2^6-x_1x_2^{11}+x_3^{11}x_4+11x_3^6x_4^6-x_3x_4^{11}+x_5^{11}x_6+11x_5^6x_6^6-x_5x_6^{11}$ \\
					\hline
				\end{tabular}
			\end{table}
		\end{footnotesize}
	\end{theorem}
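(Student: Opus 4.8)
The plan is to combine the Fermat-test-ratio reduction of Section~\ref{ss:FTR} with the refined bounds of Section~\ref{ss:refinedbounds}. By Theorem~\ref{thm:boundnd} I may assume $n\le 25$ and $d\le 17$. Since $\Aut(F)$ is imprimitive, in the notation of Set-up~\ref{setup} we have $m=1$, so $\C^{n+2}=V_1=W_{11}\oplus\cdots\oplus W_{1k_1}$ with the blocks permuted transitively by $G:=\Aut(F)$; hence $k_1\ge 2$, all blocks share a common dimension $r_1$ with $r_1k_1=n+2$, and the primitive constituents $H_{11},\dots,H_{1k_1}$ are all conjugate to a single primitive $H\subset\PGL(r_1,\C)$, so that $l(\Aut(F))$ has exponential type $r_1^{k_1}$. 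The case $r_1=1$ is the semi-permutation case, where Theorem~\ref{thm:mono} forces $F\cong F_d^n$, excluded by hypothesis; so $r_1\ge 2$. First I would extract the numerical constraints: by Lemma~\ref{lem:FTRandGF} the hypothesis $|\Aut(F)|\ge d^{n+2}(n+2)!$ forces $R(r_1^{k_1},d)\ge 1$, and by Lemma~\ref{lem:addr0} together with Lemma~\ref{lem:ratioproperties} the requirement $k_1\ge 2$ rules out $r_1=5,8$ and every $r_1\ge 7$, leaving $r_1\in\{2,3,4,6\}$ and only finitely many triples $(r_1,k_1,d)$.

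The heart of the argument is to show that $F$ must be \emph{block-diagonal}, i.e.\ $F=\sum_j F_{1j}$ with $F_{1j}\in S^d(W_{1j}^*)$ and no monomials mixing distinct blocks. I would argue this via a dichotomy on the pure block component $F_{11}$. If $F_{11}$ is zero or not smooth, then Lemma~\ref{lem:smtosm} and Lemma~\ref{lem:boundtypeII} produce a cross-block monomial in $S^{d-1}(W_{11}^*)\otimes S^1(W_{1j'}^*)$, and Lemma~\ref{lem:boundtypeII}(1) (with $|H_{1j'}|=|H|$) gives $|G|\le B(G,F)/|H|=R(\mathcal H,d)\,d^{n+2}(n+2)!/|H|$; since $R(\mathcal H,d)$ is small while $|H|\ge 12$, this is $<d^{n+2}(n+2)!$, a contradiction. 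Hence $F_{11}$, and by transitivity each $F_{1j}$, is a nonzero smooth $H$-invariant form of degree $d$.

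The main obstacle is then to exclude the remaining cross-block monomials once the $F_{1j}$ are smooth. Suppose some monomial meets two blocks; choosing $W_{11}$ with exponent $0<d_1<d$, Lemma~\ref{lem:d1d2} yields a normal subgroup $\bar{\bar H}_{11}\lhd H$ preserving a nonzero form of degree $d_1$, together with $|G|\le B(G,F)/[H:\bar{\bar H}_{11}]$. The key point is that for the admissible pairs the group $H$ itself preserves \emph{no} invariant form of degree $<d$: for $r_1=2$ this is exactly Lemma~\ref{lem:A4S4A5}, and for $r_1\in\{3,4,6\}$ it follows from the Molien-series computations already invoked in the proof of Theorem~\ref{thm:prim}. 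Thus $\bar{\bar H}_{11}\subsetneq H$, so $[H:\bar{\bar H}_{11}]\ge 2$ and $|G|\le B(G,F)/2=R(\mathcal H,d)\,d^{n+2}(n+2)!/2<d^{n+2}(n+2)!$ (alternatively, for a monomial in $S^{d-1}(W_{11}^*)\otimes S^1(W_{12}^*)$ one invokes Lemma~\ref{lem:classify} directly). Either way this contradicts the hypothesis, so $F=\sum_j F_{1j}$ is block-diagonal. I expect this elimination of mixed monomials, resting on the refined bounds together with the non-existence of low-degree invariants, to be the technically most delicate step.

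Finally, a block-diagonal $F$ with isomorphic smooth $H$-invariant summands has $\Aut(F)=C_d^{k_1}.(H^{k_1}\rtimes S_{k_1})$ (the blocks being freely permutable forces $K_1=S_{k_1}$), so that $|\Aut(F)|=B(\Aut(F))$ and the hypothesis becomes $R(\mathcal H,d)=d^{k_1}|H|^{k_1}k_1!/\big(d^{r_1k_1}(r_1k_1)!\big)\ge 1$. Testing this inequality against the finitely many candidate pairs $(H,d)$ supplied by Table~\ref{tab:listofprim} and Lemma~\ref{lem:A4S4A5} eliminates every $r_1\in\{3,4,6\}$ and the group $A_4$, and for $r_1=2$ leaves exactly $H=S_4$ with $d=6,\ k_1=2$ and $H=A_5$ with $d=12,\ k_1\in\{2,3\}$. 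The explicit binary summands $x_1^5x_2+x_1x_2^5$ (for $S_4$) and $x_1^{11}x_2+11x_1^6x_2^6-x_1x_2^{11}$ (for $A_5$) come from Lemma~\ref{lem:A4S4A5}; assembling $k_1$ copies into separate blocks reproduces precisely the three forms and automorphism groups listed in the table, completing the classification.
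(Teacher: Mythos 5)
Your overall architecture (exponential type $r_1^{k_1}$, dichotomy on smoothness of the block component $F_{11}$, exclusion of cross-block monomials, then a final Fermat-test-ratio check against Table \ref{tab:listofprim} and Lemma \ref{lem:A4S4A5}) matches the paper's proof closely, and your final numerics in the block-diagonal case are correct. However, two of your numerical claims are false exactly at the tight cases the proof must survive, and both occur for $r_1=2$, $H=A_5$. First, in the branch where $F_{11}$ is zero or not smooth, you bound $|G|\le B(G,F)/|H|$ via Lemma \ref{lem:boundtypeII}(1) and assert this is $<d^{n+2}(n+2)!$ ``since $R(\mathcal H,d)$ is small while $|H|\ge 12$.'' Take $H=A_5$, $d=3$, $k_1=3$: then $R(\mathcal H,3)=\tfrac{3^3\cdot 60^3\cdot 3!}{3^6\cdot 6!}=\tfrac{200}{3}$, so $R(\mathcal H,3)/|H|=\tfrac{10}{9}>1$, and no contradiction results (for $k_1=5$ the ratio is about $1.76$). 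Dividing by $|H|$ once is simply not enough when $d=3$ and $|H|=\mathrm{JC}(2)=60$. This is precisely why the paper's Lemma \ref{lem:classify} carries the extra factor $2$ in case (2) (valid when $k_1\ge 3$): with it, $\tfrac{10}{9}$ becomes $\tfrac{5}{9}<1$, and one checks the factor $2$ suffices for all $k_1\in\{3,\dots,7\}$, $d=3$. You must invoke Lemma \ref{lem:classify} with its case distinction, not the weaker Lemma \ref{lem:boundtypeII}(1).

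Second, in your exclusion of cross-block monomials when the $F_{1j}$ are smooth, you conclude from $\bar{\bar H}_{11}\subsetneq H$ that $[H:\bar{\bar H}_{11}]\ge 2$, hence $|G|\le R(\mathcal H,d)\,d^{n+2}(n+2)!/2<d^{n+2}(n+2)!$. This fails for $H=A_5$, $k_1=2$, $d=12$: there $R(\mathcal H,12)=\tfrac{12^2\cdot 60^2\cdot 2!}{12^4\cdot 4!}=\tfrac{25}{12}$, so your bound is $\tfrac{25}{24}>1$. The paper closes this case by using that $A_5$ is simple: $\bar{\bar H}_{11}$ is either trivial or all of $A_5$; the numerics force the index to be at most $2$, hence $\bar{\bar H}_{11}=A_5$ would have to preserve a nonzero form of degree $<12$, contradicting Lemma \ref{lem:A4S4A5}, so $\bar{\bar H}_{11}$ is trivial and the index is $60$, giving $\tfrac{25}{12\cdot 60}<1$. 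A smaller structural point: your step establishing block-diagonality for $r_1\in\{3,4,6\}$ rests on the claim that the relevant $H$ preserves no invariant of degree $<d$, which you attribute to Molien-series computations; this is not established in the paper and is unnecessary, since once $F_{11}$ is a smooth $H$-invariant form of degree $d$, Theorem \ref{thm:prim} pins down $(r_1,d,|H|)$ to $(3,4,168)$, $(3,6,360)$, $(4,4,1920)$, $(6,6,6531840)$, and in each case $R(\mathcal H,d)<1$ kills the configuration outright, with no need to discuss cross-block monomials at all. With these repairs your argument coincides with the paper's.
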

	\begin{proof}
		Since $\Aut(F)$ is imprimitive and $F$ is not isomorphic to the Fermat form, $l(\Aut(F))$ is of exponential type $r^k$ with $r\ge 2$ and $k\ge 2$, and $\Aut(F)$ has a primitive-constituent sequence $\mathcal{H}:=(H_1,\ldots,H_k)$ with $H_i\subset\PGL(r,\C)$ and $H_i\cong H_j$ for $1\le i,j\le k$. By Lemma \ref{lem:FTRandGF}, we have $R(\mathcal{H},d)\ge 1$. By direct calculation (see (\ref{eq:decreasing})), we get $|H_i|> d^{r-1}\cdot r!$. In fact, $R(\mathcal{H},d)=\frac{d^k\cdot |H_i|^k\cdot k!}{d^{rk}\cdot (rk)!}\ge 1$ implies $|H_i|>d^{r-1}\cdot r!$ since $(rk)!> (r!)^k\cdot k!$. It suffices to consider two cases: $r\ge 3$ and $r=2$.
		
		Case $r\ge 3$. If $H_1$ preserves a smooth form $F_1(x_1,\ldots,x_r)$ of degree $d$, then by Theorem \ref{thm:prim}, we have $(r,d,|H_1|)=(3,4,168)$, $(3,6,360)$, $(4,4,1920)$, $(6,6,6531840)$. By direct calculation, we have $R(\mathcal{H},d)<1$, a contradiction. Thus, $H_1$ preserves no smooth forms of degree $d$. Since ${\rm JC}(r)>106$ for $r\ge 3$, by Lemmas \ref{lem:smtosm}, \ref{lem:classify} and \ref{lem:2eleinl}, we get $|\Aut(F)|<d^{n+2}\cdot (n+2)!$, a contradiction. In fact, we may write $F=F_1+F_2$, where  $F_1\in \C[x_1,\ldots,x_r]$ and $F_2\in (x_{r+1},\ldots,x_{n+2})\cdot \C[x_1,\ldots,x_{n+2}]$ are forms of degree $d$ with $H_1$ preserving $F_1$. Then $F_1$ is not smooth and by Lemma \ref{lem:smtosm}, there exist integers $d_1,\ldots,d_r\ge 0$ and a monomial $\mathfrak{m}\in F$ with $\mathfrak{m}=x_1^{d_1}\cdots x_r^{d_r}\cdot x_{r+j}$ for some $1\le j\le n+2-r$. Thus, by Lemmas \ref{lem:classify} and \ref{lem:2eleinl} (1), we have $$|\Aut(F)|\le \frac{R(l(\Aut(F)),d)\cdot d^{n+2} \cdot (n+2)!}{{\rm JC}(r)}< \frac{106\cdot d^{n+2} \cdot (n+2)!}{{\rm JC}(r)}<d^{n+2}\cdot (n+2)!.$$
		
		Case $r=2$. If $H_1$ preserves no smooth forms of degree $d$, then $|\Aut(F)|<d^{n+2}\cdot (n+2)!$ by similar arguments as in case $r\ge 3$. Thus, we may assume $H_1$ preserves a smooth form $F_1(x_1,\ldots,x_r)$ of degree $d$. By Lemma \ref{lem:A4S4A5}, if $H_1\cong A_4, S_4, A_5$, then $H_1$ preserves no smooth forms in $2$ variables of degree less than $4,6,12$ respectively. If $H_1\cong A_4$, then $d\ge 4$ and $R(\mathcal{H},d)< 1$, a contradiction. If $H_1\cong S_4$, then $d\ge 6$ and $R(\mathcal{H},d)\ge 1$ implies $(k,d)=(2,6)$. Then $R(\mathcal{H},d)=4/3$ and $F$ can be expressed as $F_1(x_1,x_2)+F_2(x_3,x_4)+F'$, where $F_i$ are preserved by $H_i$ and $F'$ is in the intersection of the two ideals  $(x_1,x_2)\cdot\C[x_1,x_2,x_3,x_4]$ and $(x_3,x_4)\cdot\C[x_1,x_2,x_3,x_4]$.
		If $F'\neq 0$, then by Lemma \ref{lem:d1d2}, $H_1$ has a normal subgroup $\bar{H}_1$ and $|\Aut(F)|\le \frac{B(\Aut(F),F)}{[H_1:\bar{\bar{H}}_1]}$, where $\bar{\bar{H}}_1$ is a normal subgroup of $\bar{H}_1$ and $\bar{\bar{H}}_1$ preserves a non-zero form of degree $j$ for some $j\in\{1,2,3,4,5\}$. Then    $$1\le \frac{|\Aut(F)|}{d^{n+2}\cdot (n+2)!}\le \frac{B(\Aut(F),F)}{d^{n+2}\cdot (n+2)!\cdot [H_1:\bar{\bar{H}}_1]}=\frac{R(\mathcal{H},d)}{[H_1:\bar{\bar{H}}_1]}=\frac{4}{3\cdot [H_1:\bar{\bar{H}}_1]}$$ and $\bar{\bar{H}}_1=H_1\cong S_4$,  which contradicts Lemma \ref{lem:A4S4A5}. Thus, $F'=0$. Therefore, $F$ is unique up to isomorphism by Lemma \ref{lem:A4S4A5}. If $H_1\cong A_5$, then $d=12$ or $d\ge 20$ by Lemma \ref{lem:A4S4A5}. So $R(\mathcal{H},d)\ge 1$ implies $(k,d)=(2,12), (3,12)$. Similar to the case $H_1=S_4$, we have the uniqueness of $F$.
	\end{proof}
	
	As a consequence of Theorems \ref{thm:prim}, \ref{thm:imprim} and Lemma \ref{lem:A4S4A5}, we have the following result.
	\begin{proposition}\label{pp:irrbounds}
		Let $F=F(x_1,\ldots,x_r)$ be a smooth form of degree $d\ge 3$, where $r\ge 2$. Suppose $\Aut(F)$ is irreducible and $\mathfrak{R}_F=\frac{|\Aut(F)|}{d^r\cdot r!}>1$. Then either $\mathfrak{R}_F=5/2$ or $\mathfrak{R}_F\le 25/12$. Moreover, $\mathfrak{R}_F=5/2$ if and only if $(r,d)=(2,12)$ and $\Aut(F)/Z(\Aut(F))\cong A_5$. 
	\end{proposition}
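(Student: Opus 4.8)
\emph{Proof proposal.}
The plan is to treat this Proposition as a bookkeeping consequence of the three classifications already at hand, splitting according to whether $\Aut(F)$ is primitive or imprimitive and, within the primitive case, according to whether $r=2$ or $r\ge 3$. Since $\Aut(F)$ is assumed irreducible, these exhaust all possibilities. Throughout I would set $H:=\pi(\Aut(F))=\Aut(F)/Z(\Aut(F))$, using that the scalar matrices lying in $\Aut(F)$ are exactly $\mathcal{I}_{r,d}\cong C_d$ and that, by irreducibility, $Z(\Aut(F))=\Aut(F)\cap\{\lambda I_r\}$; hence $|\Aut(F)|=d\cdot|H|$ and $\mathfrak{R}_F=|H|/(d^{r-1}\cdot r!)$. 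The whole argument then reduces to listing the finitely many admissible $(H,d)$ and evaluating this ratio.

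First I would dispose of $r\ge 3$. If $\Aut(F)$ is primitive, the hypothesis $\mathfrak{R}_F>1$ gives $|\Aut(F)|\ge d^{r}\cdot r!$, so Theorem \ref{thm:prim} applies and $F$ is one of the listed forms; discarding the $(1,6)$ entry with ${\rm Lin}(X_F)$ of order $216$ (which has $\mathfrak{R}_F=1$), a direct evaluation gives $\mathfrak{R}_F\in\{7/4,\,5/3,\,5/4,\,7/6\}$, all at most $7/4<25/12$. If instead $\Aut(F)$ is imprimitive, Theorem \ref{thm:imprim} applies and $F$ is one of the three listed forms, for which $\mathfrak{R}_F\in\{4/3,\,25/12,\,25/24\}$, all at most $25/12$. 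Thus for $r\ge 3$ the ratio $5/2$ never occurs and $\mathfrak{R}_F\le 25/12$ always.

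It remains to analyse $r=2$, where Theorem \ref{thm:prim} does not apply. Here every finite subgroup of $\PGL(2,\C)$ is cyclic, dihedral, or one of $A_4,S_4,A_5$; the irreducible imprimitive ones are precisely the dihedral groups, whose subdegree sequence is $1^2$, so $\Aut(F)$ consists of semi-permutations and Theorem \ref{thm:mono} forces $\mathfrak{R}_F\le 1$, contradicting $\mathfrak{R}_F>1$. Hence for $r=2$ the group $\Aut(F)$ is primitive with $H\in\{A_4,S_4,A_5\}$, and I would substitute the degree restrictions of Lemma \ref{lem:A4S4A5} into $\mathfrak{R}_F=|H|/(2d)$. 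As this is decreasing in $d$, its maximum in each family occurs at the minimal admissible degree: $6/d\le 3/2$ for $A_4$ ($d\ge 4$), $12/d\le 2$ for $S_4$ ($d\ge 6$), and for $A_5$ the value $30/12=5/2$ when $d=12$ together with $30/d\le 3/2$ when $d\ge 20$. So among all cases the only ratio exceeding $25/12$ is $\mathfrak{R}_F=5/2$, attained exactly for $(r,d)=(2,12)$ with $H\cong A_5$.

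Collecting the three cases yields the dichotomy and identifies the extremal case; the converse direction of the ``moreover'' is the one-line computation $\mathfrak{R}_F=60/(12^{\,2-1}\cdot 2!)=5/2$. The only genuine care needed lies in the $r=2$ analysis, since it is not covered by Theorem \ref{thm:prim}: one must recognise that the imprimitive binary case is exactly the semi-permutation situation already bounded by Theorem \ref{thm:mono}, and one must track the peculiar degree gap (namely $d=12$ or $d\ge 20$) for $A_5$ in Lemma \ref{lem:A4S4A5}. It is precisely this gap that isolates the single exceptional value $5/2$ strictly above the otherwise uniform bound $25/12$.
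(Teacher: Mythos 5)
Your proposal is correct and takes essentially the same route as the paper: the paper states this proposition with no further argument beyond ``as a consequence of Theorems \ref{thm:prim}, \ref{thm:imprim} and Lemma \ref{lem:A4S4A5},'' and your case split (primitive with $r\ge 3$ via Theorem \ref{thm:prim}, imprimitive with $r\ge 3$ via Theorem \ref{thm:imprim}, and $r=2$ via Lemma \ref{lem:A4S4A5}) together with the ratio computations $\{7/4,5/3,5/4,7/6\}$, $\{4/3,25/12,25/24\}$ and $|H|/(2d)$ is exactly the intended unpacking. Your two points of extra care---dispatching the $r=2$ imprimitive case through the semi-permutation bound of Theorem \ref{thm:mono}, and the implicit observation that Fermat forms have $\mathfrak{R}_F=1$ so the hypothesis $\mathfrak{R}_F>1$ licenses the application of Theorem \ref{thm:imprim}---are precisely the details the paper's one-line citation leaves to the reader, and you supply them correctly.
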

	
	\subsection{Reducible $\Aut(F)$}\label{ss:7.2} The following lemma will be used to prove Theorem \ref{thm:reducible}.
	
	\begin{lemma}\label{lem:ratioprod}
		Let $n_1,\ldots,n_m$ be positive integers, where $m\ge 2$. Suppose $q_i$ $(1\le i\le m)$ are rational numbers satisfying $q_i=5/2$ (resp. $q_i=1$) if $n_i\ge 2$ (resp. $n_i=1$). If $\frac{\prod_{i=1}^{m}q_i\cdot n_i !}{(n_1+\cdots +n_m)!}\ge 1$, then $m=2$ and $n_1=n_2=2$.
	\end{lemma}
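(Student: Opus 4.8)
The plan is to rewrite the hypothesis in terms of a single multinomial coefficient and then to show that, apart from the claimed configuration, this coefficient always dominates the factor coming from the $q_i$. Write $N:=n_1+\cdots+n_m$, let $a$ be the number of indices $i$ with $n_i\ge 2$, and let $b:=m-a$ be the number of indices with $n_i=1$. Since $q_i=5/2$ exactly when $n_i\ge 2$ and $q_i=1$ otherwise, we have $\prod_i q_i=(5/2)^a$, and the hypothesis
\[
\frac{\prod_{i=1}^m q_i\cdot n_i!}{N!}\ge 1
\]
is equivalent to the inequality $\dfrac{N!}{\prod_i n_i!}\le (5/2)^a$, where the left side is the multinomial coefficient $\binom{N}{n_1,\dots,n_m}$.

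First I would dispose of the degenerate ranges of $a$. If $a=0$ then the left side is $m!\ge 2$ while the right side is $1$, so the inequality fails. If $a=1$, with unique part $p\ge 2$ and $b=m-1\ge 1$ ones, the left side equals $(p+b)!/p!\ge p+1\ge 3>5/2$, again a failure. Hence any admissible configuration has $a\ge 2$, and in particular $N\ge 2a\ge 4$.

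Next I would record an elementary monotonicity fact: replacing a part $p_k$ (with $p_k\ge 2$) by $p_k+1$ multiplies $\binom{N}{n_1,\dots,n_m}$ by $(N+1)/(p_k+1)>1$ (using $N>p_k$, valid since $a\ge 2$), while adjoining a fresh part equal to $1$ multiplies it by $N+1>1$; neither operation changes the factor $(5/2)^a$. Consequently, for fixed $a$ the quantity $\binom{N}{n_1,\dots,n_m}$ is smallest when $b=0$ and every part equals $2$, where it equals $(2a)!/2^a$. Thus an admissible configuration can exist only when $(2a)!/2^a\le (5/2)^a$, i.e. $(2a)!\le 5^a$. This holds for $a=2$ (since $24\le 25$) but fails for all $a\ge 3$ (already $6!=720>125=5^3$, and the left side grows far faster). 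So $a=2$ and, by the minimizing shape, $n_1=n_2=2$; here $\binom{4}{2,2}=6\le 25/4$, confirming admissibility.

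It remains to exclude $b\ge 1$ together with $a\ge 2$, which is the only point requiring care. By the monotonicity fact, deleting all $b$ parts equal to $1$ strictly \emph{increases} the ratio $\prod_i q_i\,n_i!/N!$, so an admissible configuration would yield an admissible $b=0$ configuration; by the previous paragraph the latter must be $\{2,2\}$. But then the original configuration is $\{2,2,1^{b}\}$ with $N=4+b$, for which the ratio equals $25/(4+b)!\le 25/120<1$, contradicting admissibility. Hence $b=0$, and we conclude $m=2$ with $n_1=n_2=2$. The only genuine content is the monotonicity computation and the numerical check $(2a)!>5^a$ for $a\ge 3$; the rest is routine bookkeeping.
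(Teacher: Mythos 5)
Your strategy is sound and genuinely different from the paper's: you recast the hypothesis as the bound $\binom{N}{n_1,\dots,n_m}\le(5/2)^a$ on a multinomial coefficient and run an extremal (monotonicity) argument, whereas the paper orders $n_1\ge\cdots\ge n_m$ and factors the ratio $q$ as a telescoping product of fractions of the form $q_k\,(n_1+\cdots+n_k)!\,n_{k+1}!/(n_1+\cdots+n_{k+1})!$, each of which is shown to be less than $1$ unless $m=2$ and $n_1=n_2=2$. Your disposal of $a\le 1$, your treatment of $a\ge3$ via $(2a)!>5^a$, and your final reduction of the case $b\ge1$ are all correct.

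There is, however, one genuine gap: the step ``So $a=2$ and, by the minimizing shape, $n_1=n_2=2$.'' What your minimization argument proves is that every configuration with $a=2$ has multinomial coefficient at least $\binom{4}{2,2}=6$, and that $6\le 25/4$. That the \emph{minimizer} satisfies the upper bound does not imply that \emph{only} the minimizer does: the admissibility condition is an inequality with slack ($25/4-6=1/4>0$), so a priori a shape such as $(3,2)$ or $(3,3)$ could also satisfy it. The inference pattern is invalid in general --- if the constant $5/2$ were replaced by $7/2$, the same minimization would again force $a=2$ (since $24\le 49$ while $720>343$), yet both $(2,2)$ and $(3,2)$ would then be admissible, so the phrase ``by the minimizing shape'' would ``prove'' a false uniqueness. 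Note also that your last paragraph invokes exactly this unproved claim (``by the previous paragraph the latter must be $\{2,2\}$''), so the gap propagates there. Fortunately the repair is immediate from your own monotonicity fact: any shape with two parts both $\ge2$ and different from $(2,2)$ is reached from $(2,2)$ by successive increments, the first of which multiplies the coefficient by $(4+1)/(2+1)=5/3$, so its coefficient is at least $6\cdot\frac{5}{3}=10>25/4$, and later increments only increase it further. With that one sentence inserted, both the conclusion of your third paragraph and the reduction in your fourth paragraph are justified, and the proof is complete.
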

	
	\begin{proof}
		We may assume $n_1\ge n_2\ge \cdots \ge n_m$. 
		We write $$q:=\frac{\prod_{i=1}^{m}q_i \cdot n_i !}{(n_1+\cdots +n_m)!}=\frac{q_1 \cdot n_1! n_2!}{(n_1+n_2)!}\cdot \frac{q_2\cdot (n_1+n_2)! n_3 !}{(n_1+n_2+n_3)!}\cdots\frac{q_{m-1}q_m (n_1+\cdots +n_{m-1})! n_m!}{(n_1+\cdots +n_m)!}.$$
		By the formula above, if $n_m=1$ or $m\ge 3$ or $n_1>2$, then all fractions on the right side are less than $1$, a contradiction to $q\ge 1$. Thus, we have $m=2$, $n_1=n_2=2$ and $q=\frac{25}{24}$.
	\end{proof}
	
	Next we show that smooth homogeneous polynomials of reducible automorphism groups have fewer automorphisms than Fermat polynomials.
	\begin{theorem}\label{thm:reducible}
		Let $F=F(x_1,\ldots,x_{r})$ be a smooth form of degree $d\ge 3$, where $r\geq 3$. If $\Aut(F)$ is reducible, then $|\Aut(F)|<d^{r}\cdot r!.$
	\end{theorem}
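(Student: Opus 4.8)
The plan is to exploit the block structure forced by reducibility together with the irreducible bound of Proposition~\ref{pp:irrbounds}. Write $G=\Aut(F)$. Since $G$ is finite, I would first use complete reducibility to fix a decomposition $\C^r=V_1\oplus\cdots\oplus V_m$ into irreducible $G$-invariant subspaces; reducibility of $G$ guarantees $m\ge2$. Put $n_i=\dim V_i$ and let $G_i\subset\GL(V_i)$ be the (irreducible) image of $G$ under restriction to $V_i$. Because every element of $G$ is block-diagonal for this decomposition, $G\hookrightarrow\prod_iG_i$ is injective, so $|G|\le\prod_i|G_i|$; and since $G$ preserves $F$ and respects the multidegree grading, each $G_i$ preserves the pure component $F_i\in S^d(V_i^*)$. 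Using $d^r=\prod_id^{n_i}$ and $r=\sum_in_i$, the product bound rewrites as
\[
\mathfrak{R}_F\le\frac{\prod_i\mathfrak{R}_{G_i}\cdot n_i!}{r!},\qquad\mathfrak{R}_{G_i}:=\frac{|G_i|}{d^{n_i}\cdot n_i!}.
\]
Everything then reduces to bounding the factors $\mathfrak{R}_{G_i}$ and showing the right-hand side is $<1$; I would argue by contradiction, assuming $\mathfrak{R}_F\ge1$.

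\emph{When every $F_i$ is smooth.} Then $G_i\subseteq\Aut(F_i)$ with $\Aut(F_i)$ irreducible, so Proposition~\ref{pp:irrbounds} gives $\mathfrak{R}_{G_i}\le\mathfrak{R}_{F_i}\le5/2$ for $n_i\ge2$, while $\mathfrak{R}_{G_i}\le1$ for $n_i=1$ (as then $G_i$ has order dividing $d$). Putting $q_i=5/2$ for $n_i\ge2$ and $q_i=1$ for $n_i=1$, the assumption $\mathfrak{R}_F\ge1$ places us in the hypothesis of Lemma~\ref{lem:ratioprod}, which forces $m=2$ and $n_1=n_2=2$. Then $\mathfrak{R}_F\le\mathfrak{R}_{G_1}\mathfrak{R}_{G_2}/6$, so $\mathfrak{R}_{G_1}\mathfrak{R}_{G_2}\ge6$; since each factor is at most $5/2$ and Proposition~\ref{pp:irrbounds} forbids any value in $(25/12,5/2)$, both blocks must satisfy $\mathfrak{R}_{G_i}=5/2$ (otherwise the product would be at most $\tfrac{25}{12}\cdot\tfrac52=\tfrac{125}{24}<6$). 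Hence $d=12$ and $\Aut(F_i)/Z(\Aut(F_i))\cong A_5$, so by Lemma~\ref{lem:A4S4A5} each $F_i$ is the unique $A_5$-invariant binary form of degree $12$; in particular $F_1\cong F_2$.

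\emph{Closing the boundary $(r,d)=(4,12)$.} If $F$ has a cross monomial in $S^{d_1}(V_1^*)\otimes S^{d-d_1}(V_2^*)$ with $0<d_1<d$, then Lemma~\ref{lem:d1d2} produces a normal subgroup $\bar{\bar H}\vartriangleleft H_{11}\cong A_5$ preserving a nonzero binary form of degree $d_1\le11$; but $A_5$ fixes no binary form of degree $<12$ (Lemma~\ref{lem:A4S4A5}), forcing $\bar{\bar H}=1$, so the index is $60$ and $\mathfrak{R}_F\le R(\mathcal{H},12)/60=(25/12)/60<1$, a contradiction. If there is no cross monomial, then $F=F_1+F_2$ is, up to isomorphism, exactly the imprimitive form with $(n,d)=(2,12)$ appearing in Theorem~\ref{thm:imprim}, whose automorphism group contains the swap of the two blocks and is therefore irreducible---contradicting the reducibility of $\Aut(F)$. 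Thus $\mathfrak{R}_F<1$ whenever all $F_i$ are smooth.

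\emph{The remaining case and the main obstacle.} The product bound collapses as soon as some pure component is zero or singular, since then $\mathfrak{R}_{G_i}$ need not be bounded by $5/2$; this is where I expect the real difficulty. The guiding principle is that non-smoothness forces cross monomials: if a constituent component $F_{i_1j_1}$ with $r_{i_1j_1}\ge2$ is zero or singular, Lemma~\ref{lem:smtosm} produces a monomial of $F$ linking $W_{i_1j_1}$ to another constituent, and Lemmas~\ref{lem:type2bound}, \ref{lem:classify} and \ref{lem:boundtypeII} turn each such link into a reduction factor---$\JC(r_{i_2j_2})$ for a reached constituent of dimension $\ge2$, or $d^{c-1}$ for $c$ reached constituents---while dimension-one constituents are controlled by the scalar-torus bound of Theorem~\ref{thm:diag}. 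Playing these reductions against the uniform estimate $R(\mathcal{H},d)<106$ of Lemma~\ref{lem:2eleinl}(1) settles every configuration in which some reached constituent has dimension $\ge3$, because then $\JC\ge360>106$. The genuinely delicate regime---and the principal obstacle---is when all constituents have dimension at most $2$ and only one or two cross monomials are available, so that the crude bound $106$ no longer suffices; there I would combine the sharper ratios of Lemma~\ref{lem:2eleinl}(2)--(3) and Remark~\ref{rem:136}, the finer lifting analysis of Lemma~\ref{lem:boundtypeII}(3), and the binary-form constraints of Lemma~\ref{lem:A4S4A5}, in a constituent-by-constituent argument closely paralleling the proof of Theorem~\ref{thm:imprim}.
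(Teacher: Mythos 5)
Your first two steps are correct and essentially reproduce the paper's Case~(1): the injection $G\hookrightarrow\prod_iG_i$, Proposition~\ref{pp:irrbounds} for blocks with smooth components, Lemma~\ref{lem:ratioprod} forcing two blocks of dimension $2$, then the dichotomy on cross monomials, handled by Lemma~\ref{lem:d1d2} together with Lemma~\ref{lem:A4S4A5} (when a cross monomial exists) and by irreducibility of the automorphism group of $F_1+F_2$ from Theorem~\ref{thm:imprim} (when it does not). Two small imprecisions there are repairable: the ``forbidden interval'' of Proposition~\ref{pp:irrbounds} applies to $\mathfrak{R}_{F_i}$, not to $\mathfrak{R}_{G_i}$ for the subgroup $G_i\subseteq\Aut(F_i)$, so the forcing argument should be run on $\mathfrak{R}_{F_1}\mathfrak{R}_{F_2}\ge\mathfrak{R}_{G_1}\mathfrak{R}_{G_2}\ge 6$; and in Lemma~\ref{lem:d1d2} the subgroup $\bar{\bar{H}}_{11}$ is normal in $\bar{H}_{11}$ rather than in $H_{11}$, though simplicity of $A_5$ still yields the trivial-or-$A_5$ dichotomy you need.

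The genuine gap is the complementary case, which you leave as a plan: when some component is zero or singular. That case is the technical core of the paper's proof (its Case~(2)), and listing Lemmas~\ref{lem:type2bound}, \ref{lem:classify}, \ref{lem:boundtypeII}, \ref{lem:2eleinl} and Remark~\ref{rem:136} does not discharge it. Worse, the regime you correctly flag as the obstacle --- a reached constituent of dimension $2$, where ${\rm JC}(2)=60$ while $R$ can exceed $60$ --- is \emph{not} resolved in the paper by any of the tools you name: it is resolved by the refined inequality \eqref{eq:dividingH}, whose extra factor $\beta=2$ comes from reducibility itself (when all subdegrees of $l(G)$ are equal but $G$ is reducible, the intrinsic multiplicity sequence splits $s$ into at least two parts, so $\prod_jk_j!\le s!/2$), combined with Lemma~\ref{lem:2eleinl}(2) when the subdegrees are not all equal. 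Your toolkit has no substitute for this. For reached constituents of dimension $1$ you would still need the paper's split on $c$: for $c\ge2$, $R(l(G),d)<3$ from Lemma~\ref{lem:2eleinl}(3) against the factor $d^{c-1}$ of Lemma~\ref{lem:boundtypeII}(2); for $c=1$, the lifting statement of Lemma~\ref{lem:boundtypeII}(3), Remark~\ref{rem:136} pinning $r_{i_1j_1}\in\{2,3,4,6\}$ with explicit bounds on $R$, and then Lemma~\ref{lem:primlifting} against Table~\ref{tab:listofprim}. None of this appears in your proposal. Finally, note a structural mismatch: your dichotomy is stated at the level of the irreducible blocks $V_i$, but your sketch of the singular case invokes hypotheses on constituent components $F_{i_1j_1}$; since Lemmas~\ref{lem:classify} and \ref{lem:boundtypeII} require a \emph{constituent} component to be zero or singular, and a singular block component need not have any singular constituent component, your two cases are not complementary as set up and would need to be realigned (as the paper does, by running the dichotomy over the $F_{ij}$ with $r_{ij}\ge2$).
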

	
	\begin{proof}
		To prove the theorem by contradiction, we suppose $\mathfrak{R}_F=\frac{|\Aut(F)|}{d^{r}\cdot r!}\ge 1$. Let $G:=\Aut(F)$. By Theorem \ref{thm:mono}, we may assume at least one subdegree of $G$ is larger than $1$. Let $V_i,W_{ij},k_i,r_{ij},H_{ij},m, F_{ij}$ be as in $\bf{Set\textrm{-}up}$ \ref{setupGF}. Recall that $\C^r=V_1\oplus \cdots \oplus V_m$,  $V_i=W_{i1}\oplus \cdots \oplus W_{ik_i}$, $H_{ij}\subset \PGL(W_{ij})$, and ${\rm dim}(W_{ij})=r_{ij}$. We define $\beta$ to be $1$ (resp. $2$) if $l(G)$ has at least two different subdegrees (resp. all subdegrees of $G$ are the same).  From the definitions, we have \begin{equation}\label{eq:dividingH}
			\frac{B(G,F)}{|H_{ij}|^{k}}\le \frac{R(l(G),d)\cdot d^r\cdot r!}{\beta\cdot {\rm JC}(r_{ij})^{k}}
		\end{equation}	
		for any pair $(i,j)$ and any $1\le k\le k_i$. By Lemmas \ref{lem:FTRandGF}  and \ref{lem:2eleinl}, we obtain $R(l(G),d)<60\beta$. If there is a monomial $$\mathfrak{m}\in S^{1}(W_{i_1 j_1}^*)\otimes S^{d-1}(\oplus_{r_{i_2 j_2}=1} W_{i_2 j_2}^*)$$ in $F$ with $r_{i_1j_1}\ge 2$, then by Lemma \ref{lem:type2bound}, we have 
		$$|\Aut(F)|\le \frac{B(G,F)}{|H_{i_1j_1}|^{k_{i_1}}}\le \frac{R(l(G),d)\cdot d^r\cdot r!}{\beta\cdot {\rm JC}(r_{i_1j_1})^{k_{i_1}}}< d^r\cdot r!,$$
		a contradiction. 
		From now on, we may assume that such monomial $\mathfrak{m}$ does not exist. Without loss of generality, we assume $r_{ij}\ge r_{i'j'}$ whenever $i<i'$. Then there exists a positive integer $m_1\le m$ such that $r_{ij}\ge 2$ for $i\le m_1$ and $r_{ij}=1$ for $i>m_1$. We set
		$$F_i:=\sum_{1\le j\le k_i} F_{ij}\;\; (i=1,2,\ldots,m_1)~~~\;{\rm and}\;\,~~~F_{m_1 +1}:=\sum_{\sum_{i>m_1, 1\le j\le k_i} d_{ij}=d} F^{(d_{11},d_{12},\ldots,d_{m k_m})}.$$
		If $m_1=m$, by definition $F_{m_1+1}=0$. Then Lemma \ref{lem:smtosm} implies that $F_{m_1+1}$ is smooth if $m_1<m$. It suffices to consider two cases: (1) $F_{ij}$ are smooth for all pairs $(i,j)$ with $i\le m_1$; (2) $F_{i_1j_1}$ is not smooth for some pair $(i_1,j_1)$ with $i_1\le m_1$.
		
		Case (1). Note that $V_i$ $(i=1,\ldots,m_1)$ are irreducible $G$-stable subspaces of dimension $r_i:={\rm dim}(V_i)=r_{i1}k_i$ and the irreducible linear groups $G_i:=p_i(G)$ preserve the smooth forms $F_i$ of degree $d$. Here $p_i: G\rightarrow \GL(V_i)$ is given by $g\mapsto g|V_i$. Let $V':=\oplus_{i>m_1} V_i$ and $p': G\rightarrow \GL(V')$ be given by $g\mapsto g|V'$. Let $G_{m_1+1}:=p'(G)$. Then $r=r_1+\cdots r_{m_1}+r_{m_1+1}$, where $r_{m_1+1}:={\rm dim}(V')$. By Proposition \ref{pp:irrbounds}, we have $\mathfrak{R}_{F_i}=\frac{|\Aut({F_i})|}{d^{r_i}\cdot r_i !}\le 5/2$ $(i=1,2,\ldots,m_1)$. Clearly, $G_i\subset \Aut({F_i})$ $(i=1,2,\ldots,m_1+1)$. Since $G_{m_1+1}$ consists of semi-permutations by $r_{ij}=1$ for $i>m_1$, by slightly adapting Theorem \ref{thm:mono}, we have $\mathcal{Q}:=\frac{|G_{m_1+1}|}{d^{r_{m_1+1}}\cdot r_{m_1+1} !}\le 1$. Then by Lemma \ref{lem:ratioprod} and $$1\le \mathfrak{R}_F=\frac{|\Aut(F)|}{d^r\cdot r!}\le \frac{|G_1|\cdot|G_2|\cdots |G_{m_1}|\cdot |G_{m_1+1}|}{d^r\cdot r!}\le \frac{(\prod_{i=1}^{m_1} \mathfrak{R}_{F_i}\cdot r_i!)\cdot (\mathcal{Q}\cdot r_{m_1+1}!)}{(r_1+\cdots+r_{m_1+1})!}, $$
		we have $m_1=m=2$, $r_1=r_2=2$ and $r=4$. Thus by Proposition \ref{pp:irrbounds} again, we infer that $H_{11}\cong H_{21}\cong A_5$ and $d=12$. Then we may write $F=F_1(x_1,x_2)+F_2(x_3,x_4)+F'(x_1,x_2,x_3,x_4)$, where $$F'\in \Oplus_{1\leq d_1\leq11}S^{d_1}(V_1^*)\otimes S^{12-d_1}(V_2^*).$$
		If $F'$ is zero, then $\Aut(F)$ is irreducible (see Theorem \ref{thm:imprim}), a contradiction. If $F'\neq 0$, then by Lemma \ref{lem:d1d2}, $H_{11}$ has a normal subgroup $\bar{H}_{11}$ and $|\Aut(F)|\le \frac{R(l(G),12)\cdot|\bar{\bar{H}}_{11}|\cdot d^r\cdot r!}{{\rm JC}(2)}=\frac{5\cdot |\bar{\bar{H}}_{11}|\cdot d^r\cdot r!}{144}$, where $\bar{\bar{H}}_{11}$ is a normal subgroup of $\bar{H}_{11}$ and $\bar{\bar{H}}_{11}$ preserves a non-zero form of degree $j$ for some $1\le j\le 11$. Since $H_{11}\cong A_5$ is a simple group, we have $\bar{\bar{H}}_{11}$ is either $H_{11}$ or trivial. From this and Lemma \ref{lem:A4S4A5}, we infer that $\bar{\bar{H}}_{11}$ is trivial and $\mathfrak{R}_F\le \frac{5}{144}<1$, a contradiction. This proves case (1).
		
		Case (2). Let $\mathcal{A}$ and $c$ be as in Lemma \ref{lem:boundtypeII}. Recall that $$\mathcal{A}=\{(i_2,j_2)|\; (i_2,j_2)\neq (i_1,j_1),\; \exists \text{ a monomial }\mathfrak{m}\in S^{d-1}(W_{i_1 j_1}^*)\otimes S^{1}(W_{i_2 j_2}^*) \text{ in }F \}$$ and $c$ is the cardinality of $\mathcal{A}$. Then $c>0$. If there is a pair $(i_2,j_2)\in\mathcal{A}$ with $r_{i_2 j_2}\ge 2$, then $\mathfrak{R}_F <1$ by Lemma \ref{lem:boundtypeII} (1) and the inequality (\ref{eq:dividingH}). So we may assume $r_{i_2,j_2}=1$ for all $(i_2,j_2)\in\mathcal{A}$. If $c\ge 2$, then by Lemmas \ref{lem:2eleinl} (3) and \ref{lem:boundtypeII} (2), we have $R(l(G),d)<3$ and $\mathfrak{R}_F\le \frac{R(l(G),d)}{d^{c-1}}<1$, which is impossible. Thus we have $c=1$ and $H_{i_1 j_1}$ admits an $F_{i_1 j_1}$-lifting preserving some non-zero form $F_1'$ of degree $d-1$. Since $l(\Aut(F))$ contains $r_{i_1 j_1}\ge 2$ and $1$,  by Lemma \ref{lem:2eleinl} (3) and Remark \ref{rem:136}, we have $r_{i_1 j_1}=2$, $3$, $4$, $6$ and $R(l(G),d)<11$, $2$, $11$, $6$ respectively. By Lemma \ref{lem:FTRandGF}, we have $$1\le \mathfrak{R}_F\le \frac{R(l(G),d)\cdot |H_{r_{i_1 j_1}}|}{{\rm JC}(r_{i_1 j_1})}\;\text{ and }\; |H_{r_{i_1 j_1}}|\ge \frac{{\rm JC}(r_{i_1 j_1})}{R(l(G),d)}.$$ From this, we have $|H_{i_1 j_1}|>60/11$, $360/2$, $25920/11$, $6531840/6$ if $r_{i_1 j_1}=2$, $3$, $4$, $6$ respectively. On the other hand, $H_{i_1 j_1}$ admitting an $F_{i_1 j_1}$-lifting implies that $H_{i_1 j_1}$ satisfies  Lemma \ref{lem:primlifting} (1)-(3), which is impossible by Table \ref{tab:listofprim} (see \cite{CCNPW} for character tables of finite simple groups).
	\end{proof}
	
	\subsection{A stronger form of the main theorem}\label{ss:7.3} Now we prove the following theorem, which in particular completes the proof of Theorem \ref{thm:Main}.
	
	\begin{theorem}\label{thm:atleastFermat}
		Fix integers $n\geq1$, $d\geq3$ with $(n,d)\neq (1,3), (2,4)$. Let $X\subset \P^{n+1}$ be a smooth hypersurface of degree $d$ with $|\Aut(X)|\ge d^{n+1}\cdot (n+2)!$. If $X$ is not isomorphic to Fermat hypersurface $X_d^n$,  then $X$ is isomorphic to one of the following smooth hypersurfaces $X_F$:
		
		\begin{footnotesize}
			\begin{table}[H]\rm
				\renewcommand\arraystretch{1.2}
				\begin{tabular}{p{0.3in}p{0.8in}p{0.6in}p{4.0in}}
					\hline
					$(n,d)$&$\Aut(X_F)$&$|\Aut(X_F)|$&$F$\\
					\hline
					$(1,4)$&$\PSL(2,7)$&$168$&$x_1^3x_2+x_2^3x_3+x_3^3x_1$ \\
					$(1,6)$&$C_3^2\rtimes \SL(2,3)$&$216$&$x_1^6 +x_2^6 + x_3^6-10(x_1^3x_2^3 + x_2^3x_3^3 + x_3^3x_1^3)$ \\
					$(1,6)$&$A_6$&$360$&$10x_1^3x_2^3+9(x_1^5+x_2^5)x_3-45x_1^2x_2^2x_3^2-135x_1x_2x_3^4+27x_3^6$ \\
					$(2,6)$&$C_{6}.(S_4^2\rtimes C_2)$&$6912$&$x_1^5x_2+x_2^5x_1+x_3^5x_4+x_4^5x_3$ \\
					$(2,12)$&$C_{12}.(A_5^2\rtimes C_2) $&$86400$&$x_1^{11}x_2+11x_1^6x_2^6-x_1x_2^{11}+x_3^{11}x_4+11x_3^6x_4^6-x_3x_4^{11}$ \\
					$(4,6)$&${\rm PSU}(4,3).C_2$&$6531840$&$\sum\limits_{1\leq i\leq6}x_i^6+\sum\limits_{1\leq i\neq j\leq6}15x_i^4x_j^2-\sum\limits_{1\leq i<j<k\leq6}30x_i^2x_j^2x_k^2+240\sqrt{-3}x_1x_2x_3x_4x_5x_6$ \\
					$(4,12)$&$C_{12}^2.(A_5^3\rtimes S_3) $&$186624000$&$x_1^{11}x_2+11x_1^6x_2^6-x_1x_2^{11}+x_3^{11}x_4+11x_3^6x_4^6-x_3x_4^{11}+x_5^{11}x_6+11x_5^6x_6^6-x_5x_6^{11}$ \\
					\hline
				\end{tabular}
			\end{table}
		\end{footnotesize}
		
	\end{theorem}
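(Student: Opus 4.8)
The plan is to reduce the statement to the level of the defining form $F$ and then invoke the three classification results \ref{thm:prim}, \ref{thm:imprim}, \ref{thm:reducible} according to the trichotomy reducible/primitive/imprimitive for the finite linear group $\Aut(F)\subset\GL(n+2,\C)$. First I would fix a smooth form $F$ with $X\cong X_F$ and use the hypothesis $(n,d)\neq(1,3),(2,4)$ to identify $\Aut(X)$ with $\mathrm{Lin}(X_F)$, so that the exact sequence \eqref{eq:keyexact} gives $|\Aut(F)|=d\cdot|\mathrm{Lin}(X_F)|=d\cdot|\Aut(X)|$. Hence the assumption $|\Aut(X)|\ge d^{n+1}(n+2)!$ is precisely $|\Aut(F)|\ge d^{n+2}(n+2)!$, which is exactly the hypothesis of Theorems \ref{thm:prim}, \ref{thm:imprim} and \ref{thm:reducible}.

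Next I would split into the three mutually exclusive and exhaustive cases for the finite subgroup $\Aut(F)\subset\GL(n+2,\C)$. If $\Aut(F)$ is reducible, then since $r=n+2\ge 3$, Theorem \ref{thm:reducible} yields $|\Aut(F)|<d^{n+2}(n+2)!$, contradicting the above, so this case produces no examples. If $\Aut(F)$ is primitive, Theorem \ref{thm:prim} gives the complete list of possible $F$; discarding the row $(n,d)=(2,4)$ excluded by hypothesis leaves $(1,4)$, the two cases $(1,6)$, and $(4,6)$. If $\Aut(F)$ is imprimitive and $F$ is not Fermat, Theorem \ref{thm:imprim} supplies the remaining cases $(2,6)$, $(2,12)$, $(4,12)$. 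The semi-permutation subcase, where every subdegree equals $1$, is already absorbed here: by Theorem \ref{thm:mono} it would force $F$ to be the Fermat form, which is excluded. (One may additionally cite Theorem \ref{thm:boundnd} as a preliminary reduction to $n\le 25$, $d\le 17$, though this is not logically required since the three classification theorems are stated for all $(n,d)$.)

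Finally I would translate the data from the form level back to the hypersurface level. For the primitive rows the relevant column of Theorem \ref{thm:prim} already records $\mathrm{Lin}(X_F)=\Aut(X_F)$, so these transcribe directly. For the imprimitive rows Theorem \ref{thm:imprim} records $\Aut(F)$, and I would pass to $\Aut(X_F)=\mathrm{Lin}(X_F)\cong\Aut(F)/\mathcal{I}_{n+2,d}$ via \eqref{eq:keyexact}: quotienting by the central $\mathcal{I}_{n+2,d}\cong C_d$ divides each order by $d$ and converts, for instance, $C_6^2.(S_4^2\rtimes C_2)$ into $C_6.(S_4^2\rtimes C_2)$, matching the table. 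Assembling the two lists and recording the groups $\Aut(X_F)$, their orders, and the defining polynomials then yields exactly the seven rows of the theorem.

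Since the heavy lifting is entirely contained in the cited theorems, the only genuine work here is careful bookkeeping, and the step most likely to cause slips is the passage between $\Aut(F)$ and $\Aut(X)=\mathrm{Lin}(X_F)$: one must track which table reports $\Aut(F)$ versus $\mathrm{Lin}(X_F)$, verify that the central quotient by $\mathcal{I}_{n+2,d}$ produces the stated extensions $N.H$ together with the factor-of-$d$ order drops, and confirm that exactly the pair $(2,4)$ is removed by the standing hypothesis while no case is double-counted across the primitive/imprimitive dichotomy.
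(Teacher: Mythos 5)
Your proposal is correct and follows essentially the same route as the paper: reduce to the form level via the exact sequence \eqref{eq:keyexact} (using \cite{MM63}, \cite{Cha78}) so that $|\Aut(F)|\ge d^{n+2}(n+2)!$, rule out reducible $\Aut(F)$ by Theorem \ref{thm:reducible}, and then read off the remaining cases from Theorems \ref{thm:mono}, \ref{thm:prim} and \ref{thm:imprim}, dividing group orders by $d$ when translating back to $\Aut(X_F)$. Your bookkeeping (discarding the $(2,4)$ row of Theorem \ref{thm:prim}, absorbing the semi-permutation case via Theorem \ref{thm:mono}, and the central quotient by $\mathcal{I}_{n+2,d}$) matches the paper's argument, which it states more tersely.
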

	
	\begin{proof}
		Let $F_0$ be a defining polynomial of $X$. Based on the exact sequence \eqref{eq:keyexact}, by \cite{MM63} and \cite{Cha78}, we have $|\Aut(X)|=|{\rm Lin}(X)|=|\Aut(F_0)|/d$. Thus, $|\Aut(F_0)|\ge d^{n+2}\cdot (n+2)!$, which implies $\Aut(F_0)$ is irreducible by Theorem \ref{thm:reducible}. Then the conclusion of the theorem follows from Theorems \ref{thm:mono}, \ref{thm:prim}, \ref{thm:imprim}.
	\end{proof}
	
	\begin{remark}\label{rem:comparebounds}
		Let $X_F\subset \P^{n+1}$ be a smooth hypersurface of degree $d\ge 3$ defined by $F$. If $n+2\ge 71$, then ${\rm J}(n+2)= (n+3)!$ by \cite[Theorem A]{Col07} (note that this bound can be achieved by $S_{n+3}$). On the other hand, by Theorem \ref{thm:diag}, the optimal upper bound  for the size of an abelian subgroup in $\Aut(F)$ is $d^{n+2}$. Thus the product ${\rm J}(n+2)\cdot d^{n+2}=d^{n+2}\cdot (n+3)!$ gives an upper bound for $|\Aut(F)|$, which gives an upper bound $d^{n+1}\cdot (n+3)!$ for $|\Aut(X_F)|$. However, this bound for $|\Aut(X_F)|$ is larger than the optimal bound $d^{n+1}\cdot (n+2)!=|\Aut(X_d^n)|$.
	\end{remark}
	
	\begin{remark}
		By Lefschetz hyperplane theorem, the Picard group of a smooth hypersurface $X$ of dimension at least $3$ is the same as for projective space. Hence $\Aut(X)={\rm Lin}(X)$. A smooth hypersurface $X$ in $\P^{n+1}$ of degree $d=1$ (resp. $2$) is isomorphic to $\P^{n}$ (resp. Fermat quadric $X_{2}^n$) with infinite automorphism group. Smooth cubic curves are elliptic curves and their automorphism groups are infinite. The automorphism group of a generic quartic surface is finite, but there are many smooth quartic surfaces with infinite automorphism groups. The automorphism group of Fermat quartic $X_4^2$ is infinite, and its finite subgroup ${\rm Lin}(X_4^2)\cong C_4^3\rtimes S_4$ determines $X_4^2$ uniquely among all K3 surfaces (\cite[Theorem 1.2 (i)]{Ogu05}).
	\end{remark}


\begin{thebibliography}{BlEsKe14}
		\bibitem[And50]{And50}
		Andreotti, A.:
		{\em Sopra le superficie algebriche che posseggono trasformazioni birazionali in s\`e},
		Rend. Mat. e Appl. {\bf 9} (1950), 255--279.
		
		\bibitem[AOT24]{AOT24}
		Avila, J., Ortiz, G., Troncoso, S.:
		{\em Invariant smooth quartic surfaces by all finite primitive subgroups
			of $\PGL_4(\C)$},
		J. Pure Appl. Algebra {\bf 228} (2024), 107534.
		
		\bibitem[BB22]{BB22}
		Badr, E., Bars, F.:
		{\em The stratification by automorphism groups of smooth plane sextic curves},
		\href{https://arxiv.org/pdf/2208.12749}{arXiv:2208.12749}.
		
		\bibitem[Bli17]{Bli17}
		Blichfeldt, H.: 
		{\em Finite collineation groups},
		University of Chicago Press, Chicago (1917).
		
		\bibitem[BCP97]{BCP97}
		Bosma, W., Cannon, J., Playoust, C.: 
		{\em The Magma algebra system, I: The user language}, 
		J. Symbolic Comput. {\bf 24} (1997), 235--265. 
		
		\bibitem[Bra67]{Bra67}
		Brauer, R.: 
		{\em \"{U}ber endliche lineare gruppen von primzahlgrad},
		Math. Ann. {\bf 169} (1967), 73--96.
		
		\bibitem[Bur55]{Bur55}
		Burnside, W.: 
		{\em Theory of groups of finite order},
		Dover Publications, Inc., New York, (1955).
		
		\bibitem[CS95]{CS95}
		Catanese, F., Schneider, M.:
		{\em Polynomial bounds for abelian groups of automorphisms},
		Compositio Math. {\bf 97} (1995), 1--15.
		
		\bibitem[Cha78]{Cha78}
		Chang, H. C.: 
		{\em On plane algebraic curves},
		Chinese J. Math. {\bf 6} (1978), 185--189.
		
		\bibitem[CS19]{CS19} 
		Cheltsov, I., Shramov, C.: 
		{\em Finite collineation groups and birational rigidity}, 
		Selecta Math. (N.S.) {\bf 25} (2019), 71.
		
		\bibitem[Col07]{Col07} 
		Collins, M.: 
		{\em On Jordan's theorem for complex linear groups}, 
		J. Group Theory {\bf 10} (2007), 411--423.
		
		\bibitem[Col08a]{Col08} 
		Collins, M.: 
		{\em Bounds for finite primitive complex linear groups}, 
		J. Algebra {\bf 319} (2008), 759--776.
		
		\bibitem[Col08b]{Col08b} 
		Collins, M.: 
		{\em Modular analogues of Jordan's theorem for finite linear groups},
		J. Reine Angew. Math. {\bf 624} (2008), 143--171.
		
		\bibitem[CCNPW]{CCNPW}
		Conway, J.H., Curtis, R.T., Norton, S.P., Parker, R.A., Wilson, R.A.:
		{\em ATLAS of Finite Groups},
		Oxford Univ. Press,
		Oxford, (1985).
		
		\bibitem[Dol12]{Do12} 
		Dolgachev, I.: 
		{\em Classical algebraic geometry. A modern view},
		Cambridge University Press, Cambridge, (2012).
		
		\bibitem[Dol19]{Dol19} 
		Dolgachev, I.: 
		{\em A brief introduction to automorphisms of algebraic
			varieties}, Talca. Chile. 2019. 
		\href{https://dept.math.lsa.umich.edu/~idolga/chile1.pdf}{https://dept.math.lsa.umich.edu/~idolga/chile1.pdf}.
		
		\bibitem[DD19]{DD19} 
		Dolgachev, I., Duncan, A.:
		{\em Automorphisms of cubic surfaces in positive characteristic},
		Izv. Ross. Akad. Nauk, Ser. Mat.
		{\bf 83} (2019), 15--92.
		
		\bibitem[Ess24]{Ess24} 
		Esser, L.: 
		{\em Automorphisms of weighted projective hypersurfaces}, 
		J. Pure Appl. Algebra {\bf 228} (2024), 107628.
		
		\bibitem[EL24]{EL24}
		Esser, L., Li, J.:
		{\em Hypersurfaces with large automorphism groups},
		\href{https://arxiv.org/pdf/2405.11666}{arXiv:2405.11666}.
		
		\bibitem[Fei70]{Fei70} 
		Feit, W.: 
		{\em The current situation in the theory of finite simple groups}, 
		Actes. Congres. Intern. Math. Tome {\bf 1} (1970), 55--93.
		
		\bibitem[Fei76]{Fei76} 
		Feit, W.: 
		{\em On finite linear groups in dimension at most 10}, 
		Proceedings of the Conference on Finite Groups (Univ. Utah, Park City, Utah, 1975), 397--407.
		
		\bibitem[Ful98]{Ful98}
		Fulton, W.:
		{\it Intersection theory},
		Second edition. Springer-Verlag, Berlin, (1998).
		
		\bibitem[GL11]{GL11} 
		Gonz\'{a}lez-Aguilera, V., Liendo, A.: 
		{\em Automorphisms of prime order of smooth cubic $n$-folds}, 
		Arch. Math. (Basel) {\bf 97} (2011), 25--37.
		
		\bibitem[GL13]{GL13} 
		Gonz\'{a}lez-Aguilera, V., Liendo, A.: 
		{\em On the order of an automorphism of a smooth hypersurface}, 
		Israel J. Math. {\bf 197} (2013), 29--49.
		
		\bibitem[GLM23]{GLM23} 
		Gonz\'{a}lez-Aguilera, V., Liendo, A., Montero, P.: 
		{\em On the liftability of the automorphism group of smooth hypersurfaces of the projective space}, 
		Israel J. Math. {\bf 255} (2023),  283--310.
		
		
		\bibitem[GLMV24]{GLMV24}
		Gonz\'{a}lez-Aguilera, V., Liendo, A., Montero, P.,  Villaflor Loyola, R.:
		{\em On a Torelli principle for automorphisms of Klein hypersurfaces},
		Trans. Amer. Math. Soc. {\bf 377} (2024), 5483--5511.

        \bibitem[HMX13]{HMX13} 
        Hacon, C., ${\rm M^cKernan}$, J., Xu, C.:
        {\em On the birational automorphisms of
        	varieties of general type}, 
        Ann. of Math. {\bf 177} (2013), 1077--1111.

		\bibitem[Har19]{Haru19} 
		Harui, T.:
		{\em Automorphism groups of smooth plane curves}, 
		Kodai Math. J. {\bf 42} (2019), 308--331.
		
		\bibitem[Hos97]{Hos97} 
		Hosoh, T.: 
		{\em Automorphism groups of cubic surfaces}, 
		J. Algebra {\bf 192} (1997), 651--677. 
		
		\bibitem[HS80]{HS80} 
		Howard, A., Sommese, A.:
		{\em On the orders of the automorphism groups of certain projective manifolds},
		Manifolds and Lie groups (Notre Dame, Ind., 1980), 145--158.
		
		\bibitem[Huy23]{Huy23}
		Huybrechts, D.:
		{\em The geometry of cubic hypersurfaces}, 
		Cambridge Studies in Advanced Mathematics, 206. Cambridge University Press, Cambridge, (2023).		
		
		\bibitem[Jor78]{Jor78}
        Jordan, C.:
		{\em M{\'e}moire sur les {\'e}quations diff{\'e}rentielles lin{\'e}aires {\`a} int{\'e}grale alg{\'e}brique},
		 J. reine angew. Math. {\bf 84}
		(1878), 89--215.
	    
		\bibitem[LZ22]{LZ22} 
		Laza, R., Zheng, Z.: 
		{\em Automorphisms and periods of cubic fourfolds}, 
		Math. Z. {\bf 300} (2022), 1455--1507.
		
		\bibitem[Lin71]{Lin71} 
		Lindsey, J.:
		{\em Finite linear groups of degree six}, 
		Canad. J. Math. {\bf 23} (1971), 771--790.
		
		\bibitem[MM63]{MM63} 
		Matsumura, H., Monsky, P.: 
		{\em On the automorphisms of hypersurfaces}, 
		J. Math. Kyoto Univ. {\bf 3} (1963), 347--361.
		
		\bibitem[MBD16]{MBD16}
		Miller, G. A., Blichfeldt, H. F., Dickson, L. E.: 
		{\em Theory and applications of finite groups}, Wiley, New York, (1916).
		
		\bibitem[Ogu05]{Ogu05}
		Oguiso, K.:
		{\em A characterization of the Fermat quartic K3 surface by means of finite symmetries},
		Compositio Math. {\bf 141} (2005), 404--424.
		
		
		\bibitem[OY19]{OY19} 
		Oguiso, K., Yu, X.: 
		{\em Automorphism groups of smooth quintic threefolds},  
		Asian J. Math. {\bf 23} (2019), 201--256.
		
		\bibitem[OS78]{OS78} 
		Orlik, P., Solomon, L.:
		{\em Singularities II: automorphisms of forms},
		Math. Ann. {\bf 231} (1978), 229--240.
		
		\bibitem[Pam13]{Pam13} 
		Pambianco, F.:
		{\em Characterization of the Fermat curve as the most symmetric nonsingular algebraic plane curve},  
		Math. Z. {\bf 277} (2014), 975--993.
		
		\bibitem[Rot95]{Rot95} 
		Rotman, J.:
		{\em An introduction to the theory of groups},  
		Springer-Verlag, New York, (1995).
		
		
		\bibitem[Seg42]{Seg42} 
		Segre, B.:  
		{\em The non-singular cubic surfaces}, 
		Oxford University Press, Oxford, (1942).
		
		\bibitem[ST54]{ST54} Shephard, G., Todd, J.: 
		{\em Finite unitary reflection groups}, 
		Canad. J. Math. {\bf 6} (1954), 274--304.
		
		\bibitem[Shi88]{Shi88}
		Shioda, T.:
		{\em Arithmetic and geometry of Fermat curves}, In Algebraic Geometry Seminar (Singapore, 1987), World Sci. Publishing, Singapore, (1988), 95--102.
		
		\bibitem[Sza96]{Sza96}
		Szab\'{o}, E.: 
		{\em Bounding automorphism groups},
		Math. Ann. {\bf 304} (1996), 801--811.
		
		\bibitem[Tod50]{Tod50}
		Todd, J.:
		{\em The invariants of a finite collineation group in five dimensions},
		Proc. Cambridge Philos. Soc. {\bf 46} (1950), 73--90.
		
		\bibitem[Xia94]{Xia94}
		Xiao, G.: 
		{\em Bound of automorphisms of surfaces of general type I}, 
		Ann. of Math. {\bf 139} (1994), 51--77.
		
		\bibitem[Xia95]{Xia95}
		Xiao, G.: 
		{\em Bound of automorphisms of surfaces of general type II}, 
		J. Algebraic Geom. {\bf 4} (1995), 701--793.
		
		\bibitem[WY20]{WY20}
		Wei, L., Yu, X.: 
		{\em Automorphism groups of smooth cubic threefolds}, 
		J. Math. Soc. Japan {\bf 72} (2020), 1327--1343.
		
		\bibitem[YYZ23]{YYZ23}
		Yang, S., Yu, X., Zhu, Z.: 
		{\em Automorphism groups of cubic fivefolds and fourfolds}, 
	    J. London Math. Soc. {\bf 110} (2024), e12997.
		
		\bibitem[YYZ24]{YYZ24}
		Yang, S., Yu, X., Zhu, Z.: 
		{\em On automorphism groups of smooth hypersurfaces},
		\href{https://arxiv.org/pdf/2405.09505v1}{arXiv:2405.09505v1}.
		
		\bibitem[Zhe22]{Zhe22} 
		Zheng, Z.: 
		{\em On abelian automorphism groups of hypersurfaces}, 
		Israel J. Math. {\bf 247} (2022), 479--498.
	\end{thebibliography}
\end{document}